\documentclass[11pt]{amsart}

\usepackage{setspace} 
\usepackage[margin=1in]{geometry}
\usepackage{mathptmx} 
\usepackage[T1]{fontenc}
\usepackage{amsfonts,amssymb}

\usepackage{mathtools}
\usepackage{graphicx}

\usepackage{braket} % for \Set

\newtheorem{thm}{Theorem}[section]

\newtheorem{lem}[thm]{Lemma}
\newtheorem{prop}[thm]{Proposition}
\newtheorem{claim*}{Claim}

\newtheorem{rem}[thm]{Remark}

\newcommand{\ve}{\varepsilon}
\newcommand{\wh}{\widehat}

\newcommand{\mc}{\mathcal}
\newcommand{\intr}{\text{int}\,}
\newcommand{\cl}{\text{cl}\,}
\newcommand{\fr}{\text{fr}\,}
\newcommand{\grp}[1]{\langle #1\rangle}
\newcommand{\ov}{\overline}

\newcommand{\mA}{\mathbb{A}}
\newcommand{\mD}{\mathbb{D}}
\newcommand{\mZ}{\mathbb{Z}}
\newcommand{\mS}{\mathbb{S}}
\newcommand{\mQ}{\mathbb{Q}}

\newcommand{\EM}{Eudave-Mu\~noz}

\newcommand{\Figw}[4]{
\includegraphics[width=#1]{#2}
\caption{ #3 \label{#4} } }

\newcommand{\Fig}[4]{
\includegraphics[scale=#1]{#2}
\caption{ #3 \label{#4} } }

\begin{document}

\title{Incompressible planar surfaces in hyperbolic link exteriors in the 3-sphere}%

\author[L. G. Valdez-S\'anchez]{Luis G. Valdez-S\'anchez}
\address{Department of Mathematical Sciences,
University of Texas at El Paso\\
El Paso, TX 79968, USA}
\email{lvsanchez@utep.edu}%

\subjclass[2020]{Primary 57K10; Secondary 57K30}%
\keywords{Hyperbolic links in the 3-sphere, essential planar surfaces, nonintegral slope, nonmeridional slope.}%

\begin{abstract}
For each integer $N\geq 3$ we construct examples of $N$-component hyperbolic links $L\subset\mS^3$ whose exterior contains an incompressible {\it spanning} planar surface $P\subset X_L$ with one boundary component on each boundary torus of $X_L$ of nonmeridional and nonintegral slope, thus providing counterexamples to a recent conjecture of M.\ \EM\ and M.\ Ozawa.

The case $N=3$ is the crucial one to consider: all such link pairs $(L,P)$ are classified and found to be generated by the structure of the exterior of hyperbolic \EM\ knots. More generally, necessary and sufficient conditions on integers $p_1,p_2,p_3\geq 2$ are given for the existence of a 3-component link in $\mS^3$ whose exterior contains a spanning pants with boundary slopes of the form $a_i/p_i$.

A key role in the analysis of 3-component link pairs is played by the properties of the embeddings of three mutually disjoint and nonparallel primitive circles on the boundary of a genus two handlebody. These are classified in general and in the special case when the handlebody is part of a genus two Heegaard decomposition of $\mS^3$ associated with a 3-component link pair.

The hyperbolic links with $N\geq 4$ components whose exterior contains a spanning planar surface with nonmeridional and nonintegral boundary slopes are constructed via an inductive process that starts with any of the classified 3-component hyperbolic link pairs. 
\end{abstract}

\maketitle

%\tableofcontents

\section{Introduction}\label{intro}
Let $L=K_1\sqcup K_2\sqcup\cdots\sqcup K_N\subset\mS^3$ be an $N\geq 1$ component link with exterior $X_L=\mS^3\setminus\intr\,N(L)$. A properly embedded surface $F\subset X_L$ is {\it essential} if it is geometrically incompressible and not parallel to any component of $\partial X_L$. The {\it exterior} of the surface $F$ is the manifold $X(F)=\cl[X_L\setminus N(F)]\subset X_L$.

The isotopy class of a circle on $\partial X_L$ is the {\it slope} of the circle.
We use standard meridian-longitude coordinates on each torus component of $\partial X_L$ to represent the slope $r_i\subset\partial N(K_i)$ of the boundary circles $F\cap\partial N(K_i)$ in the form $r_i=a_i/p_i$ for some integers $a_i$ and $p_i\geq 0$. 
The surface $F\subset X_L$ has 
{\it large boundary slopes}
if $p_i\geq 2$ for each $i$ with $F\cap\partial N(K_i)\neq\emptyset$, that is, if the boundary slopes of $F$ are all nonmeridional and nonintegral. 

The surface $F\subset X_L$ is a {\it spanning surface} if it has one boundary component on each component of $\partial X_L$.
Following \cite{eudave9}, we say that a spanning planar surface $P\subset X_L$ is {\it of type} $X_0(p_1,p_2,\dots,p_N)$, and write $P=X_0(p_1,p_2,\dots,p_N)\subset X_L$, if for each $1\leq i\leq N$ the slope of the boundary circle $r_i=\partial P\cap\partial N(K_i)\subset\partial P$ is of the form $r_i=a_i/p_i$ for some integers $p_i\geq 0$. There are disjoint copies of the boundary slopes of $F$ on the boundary surface $\partial X(F)$ of the 
exterior of $F$, as represented in Fig.~\ref{oz84} in the case where $F$ is a spanning pants.

For integers $p_1,p_2,\dots,p_N\geq 2$, we say that the planar surface $X_0(p_1,p_2,\dots,p_N)$ is {\it geometrically realizable} in $\mS^3$ if there is an $N$ component link $L\subset\mS^3$ whose exterior $X_L$ contains a spanning planar surface of type $X_0(p_1,p_2,\dots,p_N)$, in which case by \cite[\S1.2]{eudave9} we must have $\gcd(p_1,\dots,p_N)=1$. The notation is chosen so that the order of the integer parameters in $X_0(p_1,p_2,\dots,p_N)$ matches the order of the components in the link $L=K_1\sqcup K_2\sqcup\cdots\sqcup K_N\subset\mS^3$.

\medskip
A {\it link pair} $(L,P)$ consisting of a link $L\subset\mS^3$ and a spanning planar surface $P\subset X_L$ is {\it minimal} if no proper subset of the slopes $\partial P\subset\partial X_L$ bounds a planar surface in $X_L$. This property will be useful in establishing the properties of the links and their spanning planar surfaces in many constructions. In particular, a spanning planar surface $P$ with large boundary slopes in a minimal link pair $(L,P)$ is necessarily essential in $X_L$.

In \cite{eudave9}, M.\ Eudave-Munoz and M.\ Ozawa proposed the following conjecture:

\medskip
{\bf Conjecture 1.1:} There does not exist an essential $n$-punctured sphere with non-meridional, non-integral boundary slopes in a hyperbolic link exterior in the 3-sphere.

\medskip
Evidence in support of this conjecture is discussed in \cite{eudave9}. For instance, in the case of 1-component links, it is a consequence of \cite{gordonlu1} that any essential planar surface in a hyperbolic knot exterior must have meridional or integral boundary slope. 

\medskip
In this paper we present counterexamples to Conjecture 1.1. Specifically, we construct hyperbolic links of $N\geq 3$ components that contain essential spanning planar surfaces with large boundary slopes.

Our first result refers to a family $\mc{L}$ of {\it link pairs} $(L,P)$ consisting of a 3-component link $L\subset\mS^3$ and a spanning pants $P=X_0(p_1,p_2,p_3)\subset X_L$, $p_i\geq 2$, whose construction we now sketch; complete details can be found in Section~\ref{mcL}. 

\medskip
Each hyperbolic Eudave-Mu\~noz knot $K_1\subset\mS^3$ shares the following properties (see \S\ref{sMario}); here we denote by $\mD^2(a,b)$ a Seifert fiber space over the disk with two singular fibers of indices $a,b\geq 2$, and by $X_{K_1}(r_1)$ the manifold obtained by performing surgery on $K_1$ along a slope $r_1$:
\begin{enumerate}
\item[(T0)]
there is an incompressible twice punctured torus $T\subset X_{K_1}$ with half-integral boundary slope $r_1\subset\partial N(K_1)$, $r_1=a_1/2$, 

\item[(T1)] the torus $\wh{T}$ produced by $T$ in the surgery manifold $X_{K_1}(r_1)$ is incompressible,

\item[(T2)] the closures $T^+,T^-$ of the components of $X_{K_1}\setminus T$ are genus two handlebodies, 

\item[(T3)] for each handlebody $H\in\{T^{+}, T^-\}$, 

\begin{enumerate}
\item[(i)] 
$r_1\subset\partial H$ is a {\it Seifert circle} in $H$, that is,
the manifold $H(r_1)$ obtained by attaching a 2-handle to $H$ along the slope $r_1\subset\partial H$ is a Seifert manifold of the form $\mD^2(p_2,p_3)$ for some integers $p_2,p_3\geq 2$ of the form given in  \cite[Proposition 5.4(1)(4)]{eudave8}; for convenience, we say that $p_2,p_3$ is an {\it Eudave-Mu\~noz pair of integers},

\item[(ii)] 
there are core circles $K_2\sqcup K_3\subset H$ such that, for the link $L=K_1\sqcup K_2\sqcup K_3\subset\mS^3$, there is a spanning pants $P$ in $H\setminus \intr[N(K_1)\sqcup N(K_2)\sqcup N(K_3)]\subset X_L$ with large boundary slopes of the form $r_1=a_1/2$, $r_2=a_2/p_2\subset\partial N(K_2)$, $r_3=a_3/p_3\subset\partial N(K_3)$, so that $P=X_0(2,p_2,p_3)$,

\item[(iii)] 
at most one of the integers $p_2,p_3$ is $2$; if $p_2\geq 3$ then the core knot $K_2$ is unique in $H$ up to isotopy, and otherwise there are two nonisotopic versions of the core knot $K_2$.
\end{enumerate}
\end{enumerate}

\medskip
$\mc{L}$ is then the collection of all 3-component link pairs $(L,P)$ in $\mS^3$ constructed as in (T3). Notice that, for each pair $(L,P)$ in $\mc{L}$, at least one component of $L$ is a hyperbolic Eudave-Mu\~noz knot.

We remark that, by Proposition~\ref{torus}, the twice punctured torus $T\subset X_{K_1}$ in (T0) is unique up to isotopy and hence it is not a variable in the construction of the family $\mc{L}$.

\medskip
A spanning pants with large boundary slopes in a 3-component link exterior in $\mS^3$ is necessarily essential. Our first result states that for $N=3$ the links in $\mc{L}$ are all the hyperbolic link counterexamples for Conjecture 1.1 where the link exterior contains a spanning planar surface with large boundary slopes.

\begin{thm}\label{thm1}
The exterior of a hyperbolic 3-component link $L\subset\mS^3$ contains an essential spanning pants $P$ with large boundary slopes iff $(L,P)\in\mc{L}$, in which case the pair $(L,P)$ is minimal.

\medskip
Alternatively, for integers $p_1,p_2,p_3\geq 2$ the surface $X_0(p_1,p_2,p_3)$ is geometrically realizable in $\mS^3$ in a hyperbolic link exterior iff for some  $\{i,j,k\}=\{1,2,3\}$, $p_i=2$ and $p_j,p_k$ is an Eudave-Mu\~noz pair of integers.
\end{thm}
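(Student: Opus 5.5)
We split the theorem into two directions. In the "if" direction, for $(L,P)\in\mc{L}$ we must show three things: $P$ is essential, $L$ is hyperbolic, and $(L,P)$ is minimal.

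Minimality is immediate from the large slopes. A proper subset of $\partial P$ consists of one or two slopes, and we rule each out.
\begin{itemize}
\item A single slope $r_i=a_i/p_i$ with $p_i\geq 2$ bounding a planar surface would make $K_i$ carry a disk or an essential planar surface with nonintegral slope in $X_{K_i}$. This contradicts the classical result that only the meridian or longitude (integral, in fact $0$) of a knot in $\mS^3$ can bound such a surface.
\item Two slopes $r_i,r_j$ bounding an annulus would give a cable structure, which homologically forces $p_ip_j$ to divide linking data. This is incompatible with $p_i,p_j\geq 2$ because the $r_i$ must be null-homologous in $X_L$.
\end{itemize}
Essentiality then follows from the remark that a spanning planar surface with large slopes in a minimal pair is essential.

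For hyperbolicity of $L$ we use Thurston's theorem: it suffices to exclude essential spheres, disks, annuli and tori in $X_L$. Since $P$ is essential with large slopes, any such surface can be isotoped to meet $P$ minimally. Cutting along $P$ gives the exterior $X(P)$, which from the construction is $H\setminus\intr N(K_2\sqcup K_3)$ glued to the complementary handlebody. Here we use (T1), incompressibility of $\wh T$, together with the fact that $K_1$ is hyperbolic, to rule out essential tori and annuli. Any essential torus would either lie in $X_{K_1}$ (impossible) or meet $\wh T$ in $X_{K_1}(r_1)$, producing Seifert pieces incompatible with $\mD^2(p_2,p_3)$ plus hyperbolicity.

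In the "only if" direction we start from a hyperbolic $L$ and an essential spanning pants $P=X_0(p_1,p_2,p_3)$ with all $p_i\geq 2$. The plan has four steps.
\begin{enumerate}
\item Cap off $P$ with the surgery solid tori. The exterior $X(P)$ is then shown, via the standard argument of \cite{eudave9} and homology considerations, to be a genus two handlebody in which the three copies of the slopes on $\partial X(P)$ are primitive, mutually disjoint and nonparallel.
\item Apply the classification of such triples of primitive circles on a genus two handlebody $H$. Here $H$ is part of a genus two Heegaard splitting of $\mS^3$ associated to the link pair, and the classification forces one of the $p_i$, say $p_1$, to equal $2$.
\item Remove $K_2,K_3$. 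The surface $P\cup$ (annuli) becomes a twice punctured torus $T\subset X_{K_1}$ with slope $a_1/2$ satisfying (T0)--(T2), and $H(r_1)$ is $\mD^2(p_2,p_3)$.
\item Invoke the characterization of knots admitting such tori, namely \cite{eudave8} together with Proposition~\ref{torus}. This shows $K_1$ is an Eudave-Mu\~noz knot, that $p_2,p_3$ form an Eudave-Mu\~noz pair, and that $K_2,K_3$ are the cores in (T3)(ii)--(iii). Hence $(L,P)\in\mc{L}$.
\end{enumerate}
The numerical reformulation in the second half of the theorem follows by reading off the parameters in both directions.

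The main obstacle is step 2: deriving $p_i=2$ for some $i$, and identifying the Heegaard/primitive-circle configuration well enough to place it in \EM's framework. We would first try to show that the double of the configuration yields a Seifert surgery whose index constraints force the value $2$.
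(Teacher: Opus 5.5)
\textbf{Main gap: the ``only if'' direction.} Your Step~1 asserts the opposite of what is true.

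\emph{Step~1 is false, and it clashes with Step~3.} For every pair in $\mc{L}$, the slope $r_1$ of the Eudave-Mu\~noz component is a split Seifert circle in $X(P)$, so it is neither primitive nor a power there. Your own Step~3 needs exactly this. The twice punctured torus $T_1=\partial X(P)\cap\partial\bigl(N(P)\cup N(K_2)\cup N(K_3)\bigr)$ is incompressible in $X(P)$ only if $r_1$ is neither primitive nor a power in $X(P)$. So Steps~1 and~3 cannot both hold.

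\emph{Step~2 cannot work.} No classification of primitive triples can force some $p_i=2$: all-primitive configurations occur with every $p_i\geq 3$, e.g.\ the non-hyperbolic realizations in \S\ref{con1}.

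\emph{How the classification is actually used.} It runs the other way round.
\begin{enumerate}
\item Hyperbolicity makes $X(P)$ a genus two handlebody. It also forbids power slopes and pairs of slopes coprimitive away from the third, since these yield essential tori or annuli (Lemma~\ref{lemA}(4)--(6), Lemma~\ref{lemB}).
\item Proposition~\ref{coprim2}(1) applies Kaneto's theorem to the Heegaard splitting $\mS^3=(X(P)\cup N(K_1))\cup(N(P)\cup N(K_2)\cup N(K_3))$. It shows that if all three slopes were primitive, then any two would be coprimitive away from the third.
\item Hence some slope, say $r_1$, is neither primitive nor a power in $X(P)$. Then $T_1$ is an incompressible twice punctured torus splitting $X_{K_1}$ into two genus two handlebodies.
\item Both $p_1=2$ and ``$K_1$ is a hyperbolic Eudave-Mu\~noz knot'' then come from the Gordon--Luecke/Ozawa characterization (Lemma~\ref{char}(2),(3)). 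This gives $(L,P)\in\mc{L}$ as in Lemma~\ref{lemD}.
\end{enumerate}
No Seifert-surgery ``doubling'' is needed. Proposition~\ref{torus} is not part of the characterization either; it only makes $\mc{L}$ independent of the choice of $T$.

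\textbf{Second gap: hyperbolicity in the ``if'' direction.} You misidentify $X(P)$: it is the complementary handlebody $H_j$ itself, while $\cl[H_i\setminus N(K_2\sqcup K_3)]=N(P)$. Your dichotomy ``lies in $X_{K_1}$ or meets $\wh T$'' is vacuous, since all of $X_L$ lies in $X_{K_1}$. Hyperbolicity of $K_1$ alone does not exclude, say, a torus enclosing $K_2\sqcup K_3$, or an annulus from $\partial N(K_i)$ to $\partial N(K_j)$.

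What works is to intersect with $P$ and use the reduction of Lemma~\ref{lemB} to conditions on $X(P)$. Then:
\begin{enumerate}
\item No two slopes are coprimitive away from the third, because $r_1$ is neither primitive nor a power in $X(P)$.
\item Neither $r_2$ nor $r_3$ is a power in $X(P)$. Each is a power in $H_i$, hence a regular fiber of $H_i(r_1)$ by \cite[Lemma 6.7]{valdez14}. A power in $X(P)$ would also be a regular fiber of $X(P)(r_1)$, whereas regular fibers of the two sides meet once in $\wh T$ (GL-2).
\end{enumerate}

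\textbf{Minor points.} Your homological argument against coannular pairs is invalid: no single $r_i$ is null-homologous in $X_L$, since $a_i\neq 0$. Minimality is better deduced after hyperbolicity, because coannular slopes on distinct components give an essential annulus; alternatively use Lemma~\ref{lemA}(3). Essentiality of $P$ holds by Lemma~\ref{lemA}(1) with no minimality input.
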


We will see in \S\ref{mcL} that for each link pair $(L,P)\in\mc{L}$ the exterior of the pants $P$ is a genus two handlebody, in fact one of the handlebodies $T^+$ or $T^-$ in (T2).
More generally, if a pants $P=X_0(p_1,p_2,p_3)$ is geometrically realizable in $\mS^3$ then by Lemma~\ref{lemA}(2) it can be geometrically realized so that its exterior $X(P)$ is a genus two             handlebody.

\medskip
Combining the results of \cite{eudave9} with Theorem~\ref{thm1} gives necessary and sufficient conditions for a surface of type $X_0(p_1,p_2,p_3)$ to be geometrically realizable in $\mS^3$ as follows:

\begin{thm}\label{thm2}
For integers $p_1,p_2,p_3\geq 2$, the surface $X_0(p_1,p_2,p_3)$
is geometrically realizable in $\mS^3$ iff 
at least one of the following conditions holds:
\begin{enumerate}
\item
for some $\{i,j,k\}=\{1,2,3\}$
the equation
\[
(*)\quad |p_iXY-p_jX+p_kY|=1
\]
has integer solutions $X,Y$, 

\medskip
\item
$p_i\equiv\pm1\bmod p_j$ \,for some labels $i,j\in\{1,2,3\}$.
\end{enumerate}
\end{thm}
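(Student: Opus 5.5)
We obtain Theorem~\ref{thm2} by combining Theorem~\ref{thm1} with the results of \cite{eudave9}, after splitting according to whether the link realizing $X_0(p_1,p_2,p_3)$ is hyperbolic. Fix $p_1,p_2,p_3\geq 2$ and suppose $P=X_0(p_1,p_2,p_3)\subset X_L$ is realized by a 3-component link $L$. Since $P$ is a spanning pants with large boundary slopes, it is essential. The proof then has two parts.

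\emph{Nonhyperbolic case.} If $X_L$ is not hyperbolic, then by Thurston it is Seifert fibered or toroidal. I would invoke the analysis of \cite{eudave9}, which, as I understand it, treats exactly the realizations of $X_0(p_1,p_2,p_3)$ in Seifert fibered and toroidal exteriors.
\begin{itemize}
\item In the Seifert fibered case, each $K_i$ is a fiber, or the link is built from torus knots and cores. The conditions \cite{eudave9} obtain there are: the diophantine condition $(*)$, which comes from gluing $\mD^2(p_j,p_k)$-type pieces and computing intersection numbers of slopes; and the congruence condition $p_i\equiv\pm1 \bmod p_j$, which comes from the cable or torus-knot configurations.
\item For the converse direction, I would exhibit explicit links, namely cables of torus knots and Seifert links, realizing each solution of $(*)$ or of the congruence. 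I expect these constructions to be contained in \cite{eudave9}, so this direction reduces to citation.
\end{itemize}
The main check here is that \cite{eudave9} covers every nonhyperbolic configuration, including toroidal exteriors. For these I would run a JSJ-type argument: cut $P$ along the essential tori and reduce to Seifert pieces, where the same two conditions appear.

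\emph{Hyperbolic case.} If $X_L$ is hyperbolic, Theorem~\ref{thm1} says realizability is equivalent to: for some labeling, $p_i=2$ and $p_j,p_k$ is an Eudave-Mu\~noz pair. I must then show that this condition already implies (1) or (2), so that the hyperbolic case adds nothing new to the list. I would use the explicit form of Eudave-Mu\~noz pairs from \cite[Proposition 5.4(1)(4)]{eudave8}: these are the indices of the singular fibers of $\mD^2(p_j,p_k)$ arising from Seifert surgeries on the handlebody. The target is to produce integers $X,Y$ with $|2XY-p_jX+p_kY|=1$, or to verify a congruence $p_j\equiv\pm1\bmod p_k$, or one involving $2$.

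Since $p_j,p_k$ are coprime and at least one is odd, one route is to rewrite $(*)$ with $p_i=2$ as $(2X+p_k)(2Y-p_j)=\pm2-p_jp_k$ and solve it from the parametrization of the pair. A more conceptual alternative would be to identify the Eudave-Mu\~noz parametrization directly with the solutions of $(*)$.

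The main obstacle is this arithmetic verification, which depends on the precise parametrization in \cite{eudave8}. If a direct arithmetic check fails, I would argue geometrically instead. The handlebody $H$ with Seifert circle $r_1$ is, I expect, a genus two handlebody in $\mS^3$ that embeds a nonhyperbolic (Seifert) model of the same pants type. Realizing $X_0(2,p_j,p_k)$ in a Seifert link exterior would then place it in the list of \cite{eudave9}.
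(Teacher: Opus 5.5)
You follow the paper's structure: for necessity you split on whether $L$ is hyperbolic, quote \cite[\S1.2]{eudave9} in the nonhyperbolic case (so the JSJ reduction you sketch is unnecessary), and use Theorem~\ref{thm1} in the hyperbolic case. The proposal is incomplete at exactly the step you call the main obstacle. You never show that ``$p_i=2$ and $p_j,p_k$ an \EM\ pair'' forces (1) or (2). The routes you propose for it (the parametrization of \cite{eudave8}, or an unspecified comparison with a Seifert model) are neither carried out nor needed, because the \EM\ structure plays no role here. Any geometric realization in $\mS^3$ has $\gcd(p_1,p_2,p_3)=1$ by \cite[\S1.2]{eudave9}. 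So once $p_i=2$, some $p_k$ is odd, hence $p_k\equiv 1\bmod p_i$, and condition (2) holds outright. That one line is the paper's entire treatment of the hyperbolic case. You had both ingredients (``at least one is odd'' and ``a congruence involving $2$'') but did not notice that (2) allows the modulus to be $2$. Your diophantine route also closes without any parametrization: with $p_i=2$ and $p_k$ odd, $X=(1-p_k)/2$ and $Y=p_jX+1$ give $2XY-p_jX+p_kY=1$.

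The converse is the other weak point. You defer it to an expectation that \cite{eudave9} contains realizations for every solution of (1) and every instance of (2). The paper remarks that the structure theorems of \cite{eudave9} would permit this, but it does not rely on them. Instead it proves sufficiency by explicit constructions in \S\ref{geo}. There $P$ is a pants on the Heegaard surface of $M^3=H(u\sqcup v)$, and the parameters are chosen so that $u$ is primitive in $H$; hence $M^3$ is $\mS^3$, $\mS^1\times\mS^2$ or a lens space. The parameters are also chosen so that $|H_1(M^3;\mZ)|=1$: this is $|p_1XY+p_2Y-p_3X|=1$ in case (1), and $|\delta(n-p_3)|=1$ with $n=p_3+1$ in case (2). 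As it stands your converse is an unverified citation. Either check that \cite{eudave9} really yields every required realization, or supply constructions of this kind.
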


Theorem~\ref{thm2} answers the first part of \cite[Question 1.5]{eudave9}. In particular, the surface $X_0(5,7,18)$ considered in \cite{eudave9} fails conditions (1) and (2) of Theorem~\ref{thm2} and hence cannot be geometrically realized in $\mS^3$. On the other hand, for any integers $p_1,p_2\geq 2$, by Theorem~\ref{thm2} each of the surfaces $X_0(p_1,p_2,p_1+p_2\pm1)$ and $X_0(2,2p_1-1,p_2)$ is geometrically realizable in $\mS^3$, the latter one sometimes in hyperbolic link exteriors by Theorem~\ref{thm1}.

By Remarks~\ref{option1b} and \ref{option1}, for any values of the integers $X,Y$ in Theorem~\ref{thm2}(1) the surface $X_0(p_1,p_2,p_3)$ can be geometrically realized in $\mS^3$ in a link exterior whose components are a possibly trivial $(X,Y)$ torus knot on an unknotted torus $T\subset\mS^3$ and the Hopf link formed by the cores of the solid tori complementary to $T$, as represented in Fig.~\ref{oz90}. 

\begin{figure}
\Fig{.7}{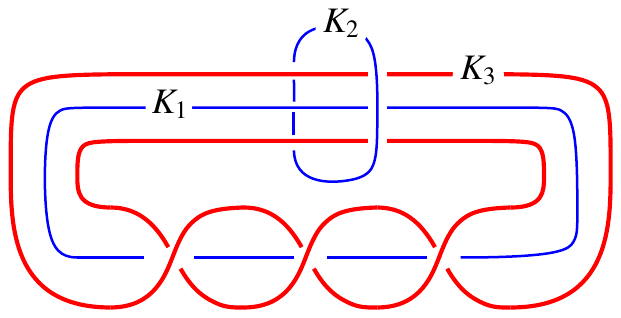}{A link geometrically realizing $X_0(p_1,p_2,p_3)$ with $|6p_1-2p_3+3p_2|=1$.}{oz90}
\end{figure}

\medskip
Regarding the equation $(*)$ in Theorem~\ref{thm2}(1), we remark that, for any integers $a,b,c,d$ with $a\neq 0$, the hyperbolic Diophantine equation
\[
axy+bx+cy=d \iff (ax+c)(ay+b)=ad+bc
\]
admits finitely many solutions over the integers which
can be found from the set of integral factorizations of $ad+bc$ over $\mZ$.

\medskip
Our next result states that each hyperbolic Eudave-Mu\~noz knot can be isotoped onto a Seifert Klein bottle contained in the exterior of some trivial or $(2,2n+1)$ torus knot with large boundary slopes. 
To state the result, observe that a once-punctured Klein bottle $F$ contains infinitely many orientation reversing slopes which we call the {\it centers} of $F$. Indeed, cutting $F$ along any nontrivial separating arc yields two Moebius bands whose core circles form disjoint centers of $F$.

\begin{thm}\label{thm5}
Let $\mc{F}$ be the collection of pairs $(K,F)$, where $K\subset\mS^3$ is a torus knot of type $(2,2n+1)$, possibly trivial, and $F\subset X_K$ is a once punctured Klein bottle with large boundary slope of the form $a/p$ for some odd integer $p\geq 3$.

\begin{enumerate}
\item
If $(K_1,F)\in\mc{F}$ and $K_2\sqcup K_3\subset F$ are any pair of disjoint centers, each of which is a nontrivial knot, then at least one of $K_2,K_3$ is a hyperbolic \EM\ knot.

\medskip
\item
For each hyperbolic \EM\ knot $K_1\subset\mS^3$, there is a pair $(K_3,F)\subset\mc{F}$ such that $K_1$ embeds in $F$ as a center circle. Moreover, if $K_2\subset F\setminus K_1$ is a center,
$L=K_1\sqcup K_2\sqcup K_3\subset\mS^3$, and $P=X_0(2,2,p_3)$ is the spanning pants $F\cap X_L\subset X_L$, then $(L,P)\subset\mc{L}$.
\end{enumerate}
\end{thm}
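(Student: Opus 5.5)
The plan is to pass through Theorem~\ref{thm1}, using the pants $P=F\cap X_L$ obtained by cutting $F$ along two disjoint centers. For part (1), take $(K_1,F)\in\mc{F}$ and disjoint centers $K_2\sqcup K_3\subset F$. A regular neighborhood of a center in $F$ is a Möbius band, so $N(K_i)\cap F$ is a Möbius band whose boundary has slope $a_i/2$ on $\partial N(K_i)$, for $i=2,3$. It follows that $P=F\cap X_L$ with $L=K_1\sqcup K_2\sqcup K_3$ is a spanning pants of type $X_0(p,2,2)$, with $p\geq3$ odd, and all its boundary slopes are large.

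The first task is to show that $(L,P)$ is minimal, and hence that $P$ is essential. Since $\gcd(2,2)\neq1$ and $\gcd(p,2)=1$, the only candidate proper subsets of $\partial P$ that could bound a planar surface are single slopes or a pair containing $a_1/p$. A single large slope cannot bound a disk in a knot exterior. For a pair, such as $\{r_1,r_2\}$, a bounding annulus would force $K_1$ and $K_2$ to be cabled on each other in a way incompatible with the slopes; I would rule this out using the $\gcd$ condition of \cite[\S1.2]{eudave9} applied to the sublink, or homologically via linking numbers.

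The main obstacle is to show that $X_L$ is hyperbolic, or to handle the case when it is not. Here I would use the hypothesis that $K_2$ and $K_3$ are nontrivial knots. If $X_L$ were Seifert fibered or toroidal, the essential pants $P$ would force a Seifert or graph structure on $X(P)$. The strategy is to show, via the classification of link pairs preceding Theorem~\ref{thm2} (Remarks~\ref{option1b} and \ref{option1}), that every nonhyperbolic realization of a pants with large slopes comes from torus-knot/Hopf-link configurations in which some component is trivial. This would contradict the hypothesis, and I expect this reduction to be the delicate step. Once $L$ is hyperbolic, Theorem~\ref{thm1} gives $(L,P)\in\mc{L}$. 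By the construction of $\mc{L}$, with $p_i=2$ occurring for the knot carrying the twice punctured torus, one of the components with slope denominator $2$, namely $K_2$ or $K_3$, is the hyperbolic \EM\ knot.

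For part (2), start from a hyperbolic \EM\ knot $K_1$ and the handlebody $H=T^\pm$ of (T2)–(T3). Choose the version of (T3)(iii) in which one of the Eudave-Mu\~noz pair is $2$, so that $P=X_0(2,2,p_3)$ with $p_3$ odd. The two boundary circles of $P$ with slopes $a/2$ cap off with Möbius bands in $N(K_1)$ and $N(K_2)$. Gluing these Möbius bands to $P$ gives a once-punctured Klein bottle $F\subset X_{K_3}$ with slope $a_3/p_3$, in which $K_1$ and $K_2$ are disjoint centers.

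It remains to check that $K_3$ is a trivial or $(2,2n+1)$ torus knot. The Seifert structure $H(r_1)=\mD^2(2,p_3)$ from (T3)(i) shows that $K_3$ is a core of a Seifert piece with an index-$2$ exceptional fiber inside a genus-two Heegaard splitting of $\mS^3$. I would identify $K_3$ with a $(2,2n+1)$ torus knot from the explicit description in \cite[Prop.~5.4]{eudave8}. Finally, $(L,P)\in\mc{L}$ holds by construction.
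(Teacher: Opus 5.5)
There is a genuine gap in part (1), at exactly the step you flag as delicate. Its source is that your argument never uses the hypothesis that $K_1$ is a trivial or $(2,2n+1)$ torus knot.

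The reduction you propose, that every nonhyperbolic realization of a pants with large slopes is a torus-knot/Hopf-link configuration with a trivial component, is not proved in the paper. Remarks~\ref{option1b} and \ref{option1} only describe particular realizations. The claim is also false. Let $K_1$ be a $2$-cable of a trefoil $c$, let $F=B\cup R$ be the once punctured Klein bottle with large slope from the proof of Lemma~\ref{klein}, let $K_2=c$ be the core of $B$, and let $K_3$ be a center of $F$ disjoint from $K_2$. By Lemma~\ref{klein}(3) each center is $c$ or a satellite of $c$. So all three components are knotted, $L$ is toroidal, and no component is a hyperbolic \EM\ knot. This example also shows the hypothesis on $K_1$ cannot be dropped. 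Moreover, even if your claim held it would give no contradiction, since $K_1$ itself is allowed to be trivial.

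The paper uses the hypothesis on $K_1$ to obtain (H1). Because $N(F)=N(P)\cup N(K_2)\cup N(K_3)$, we have $X(P)=X(F)$, which is a genus two handlebody by Lemma~\ref{klein}(4). Three cases follow.
\begin{itemize}
\item If some $r_i$ is neither primitive nor a power in $X(P)$, Lemma~\ref{lemD} directly makes $L$ hyperbolic and $K_i$ a hyperbolic \EM\ knot with $p_i=2$, so $i\in\{2,3\}$. No minimality argument and no appeal to Theorem~\ref{thm1} is needed.
\item If all $r_i$ are primitive, Proposition~\ref{coprim2}(2) makes two components trivial, contradicting the hypothesis on $K_2,K_3$.
\item If some $r_i$ is a power, Lemma~\ref{lemA}(6) makes $K_i$ trivial, so $i=1$. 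This case needs an argument specific to the Klein bottle. $F$ misses the companion solid torus $V$ of $r_1$, so it lies in the solid torus $W=\cl[X_{K_1}\setminus V]$, on whose boundary $r_1$ is integral. This gives a closed Klein bottle $\wh{F}\subset W(r_1)=\mS^3$, which is impossible.
\end{itemize}
Your plan supplies neither this last step nor the ``two trivial components'' conclusion.

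Part (2) has a similar hole where you identify the knot type of $K_3$. The index-$2$ fiber of $H(r_1)=\mD^2(2,p_3)$ says nothing about how $K_3$ sits in $\mS^3$, and deferring to the explicit description of \EM\ knots is not an argument. The missing observation is that $F=P\cup B_1\cup B_2$ with Moebius bands $B_i\subset N(K_i)$, so $N(F)=N(P)\cup N(K_1)\cup N(K_2)$. Hence the exterior of $F$ in $X_{K_3}$ is $X(P)=H'$, the other genus two handlebody of $X_{K_1}\setminus T$. Lemma~\ref{klein}(4) then forces $K_3$ to be trivial or a $(2,2n+1)$ torus knot. The rest of your part (2) agrees with the paper: taking the side with $H(r_1)=\mD^2(2,p_3)$, $p_3$ odd, and capping the two half-integral circles with Moebius bands.
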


The motivation behind the construction of the collection $\mc{F}$ in Theorem~\ref{thm5} comes from the properties of an infinite collection  $\mc{K}$ of genus one hyperbolic knots in $\mS^3$ whose exterior contains the maximum possible number of mutually disjoint and nonparallel Seifert tori constructed in \cite[Section 8]{valdez14}. 
For each knot $K\in\mc{K}$, its exterior $X_K$ contains 5 mutually disjoint and nonparallel Seifert tori which separate it into 5 genus two handlebody regions, each of which contains a special core knot. 
Each exterior $X_K$ contains a link $L=K_2\sqcup K_3\sqcup K_4$ formed by 3 of the special core knots in $X_K$ shown in \cite[Figure 23]{valdez14}, such that $K_3$ is a $(2,2n+1)$ torus knot that bounds a once-punctured Klein bottle $F$ and generically (see \cite[Lemma 8.3]{valdez14} $K_2\sqcup K_4$ are hyperbolic \EM\ knots which are centers of $F$.

\medskip
Our last result allows for an inductive construction of hyperbolic $N$-component links in $\mS^3$ for each integer $N\geq 4$ whose exterior contains an essential spanning planar surface with large boundary slopes, starting with the hyperbolic minimal pairs in the family $\mc{L}$ as seeds.

\begin{thm}\label{thm3}
Let $L\subset\mS^3$ be a hyperbolic link of $N\geq 3$ components whose exterior contains a spanning planar surface $P=X_0(p_1,p_2,\dots,p_N)$, $p_i\geq 2$, such that the pair $(L,P)$ is minimal. Then the surface $P\subset X_L$ is essential and there are infinitely many pairs $(K^*,p^*_1)$, where $K^*\subset\mS^3\setminus L$ is a knot and $p^*_1=p^*_1(K^*)\geq 2$, such that the $(N+1)$-component link $L^*=L\sqcup K^*\subset\mS^3$ is hyperbolic and its exterior contains a spanning planar surface of type $X_0(p^*_1,p_2,\dots,p_N,2)$, with $(L^*,P^*)$ a minimal pair.

In particular, for each integer $N\geq 3$, there are infinitely many  $N$-component hyperbolic links in $\mS^3$ whose exteriors contain an essential spanning planar surface $X_0(p_1,p_2,\dots,p_{N-1},2)$ with $p_i\geq 3$ for $1\leq i\leq N-1$.
\end{thm}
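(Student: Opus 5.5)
The plan has three parts: show $P$ is essential, build $L^*$ by an explicit local modification near $K_1$, and then check minimality and hyperbolicity.

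\emph{Essentiality of $P$.} Suppose $P$ compresses along a disk $D$ in $X_L$. Since $P$ is planar, $\partial D$ separates $P$ into two planar pieces. Each piece carries a nonempty proper subset of the boundary slopes; properness holds because $\partial D$ is essential in $P$. Surgering $P$ along $D$ therefore yields a planar surface in $X_L$ bounded by a proper subset of the slopes $\partial P$, contradicting minimality. If instead $P$ were boundary parallel, it would be an annulus; this is impossible since $N\geq 3$ and $P$ has exactly one boundary circle on each torus.

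\emph{Construction of $L^*$.} Take $K^*$ to be a cable-like knot sitting in a collar $V\cong T^2\times I$ of $\partial N(K_1)$. Concretely, choose $K^*$ to be a curve on a parallel torus $T_1'\subset V$ that meets the annulus $A=P\cap V$ in a controlled way, for instance a curve of slope $c/d$ on $T_1'$ with suitable intersection number $\Delta$ with $r_1=a_1/p_1$. The natural model is to run a band along the slope $r_1$ and replace the annulus $A$ by a planar surface in $V\setminus N(K^*)$ that has:
\begin{itemize}
\item one boundary circle on $\partial N(K_1)$, of a new slope $a_1^*/p_1^*$;
\item one boundary circle on $\partial N(K^*)$, of slope with denominator $2$;
\item one boundary circle matching $P\cap T_1'$.
\end{itemize}
A pants in a solid-torus-like region realizing an $X_0(\cdot,\cdot,2)$ pattern exists by the Seifert-fibered models of Remark~\ref{option1} and Lemma~\ref{lemA}: in $T^2\times I$ with a fibered knot removed, horizontal surfaces give pants whose slopes have denominators determined by the fiber indices. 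Gluing this pants to $P\setminus A$ gives $P^*$. Varying the parameter $c/d$ over infinitely many choices, $K^*$ is then a $(p,q)$ curve with $p^*_1\to\infty$, which produces infinitely many pairs $(K^*,p_1^*)$. Using the denominator $2$ for $K^*$ keeps $\gcd=1$ automatically and makes the final count $X_0(p_1,\dots,p_{N-1},2)$ work inductively. Iterating from a seed in $\mc{L}$ with $p_i\geq 3$ (Theorem~\ref{thm1} permits seeds with at most one $p_i=2$) and applying the construction at the component with label $2$ each time gives the final statement.

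\emph{Minimality and hyperbolicity.} Minimality of $(L^*,P^*)$ reduces to that of $(L,P)$. Suppose a planar surface bounded by a proper subset of $\partial P^*$ exists. Intersect it with $\partial V$ and simplify, using the essentiality of the boundary slopes in $V\setminus N(K^*)$. The pieces in $V$ are then horizontal, so the surface either recovers a proper planar subsurface for $L$ or omits the $K^*$ boundary in a way that forces the slope on $\partial N(K_1)$ to be $r_1$, which contradicts $p_1^*\neq p_1$.

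Hyperbolicity is the main obstacle. The complement $X_{L^*}$ contains the incompressible torus $\partial V$ when $K^*$ is a cable, so a genuine cable cannot work. I would instead take $K^*$ to be a satellite pattern in $V$ that is hyperbolic in $V\setminus K_1$, for example a twisted Whitehead-type pattern that still carries the band-sum pants. Then $X_{L^*}=X_L\cup_{\partial} (V\setminus N(K^*))$, and I would get hyperbolicity from Thurston's hyperbolic Dehn filling: $L^*$ viewed from $L\sqcup K^*\sqcup(\text{auxiliary unknot})$ is obtained by $1/n$ fillings on a hyperbolic manifold, hence is hyperbolic for all but finitely many $n$. These $n$ also supply the infinitely many pairs $(K^*,p_1^*)$. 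The delicate point is to verify that the pants survives these twists with the claimed denominators, and I would try to arrange the auxiliary unknot to be disjoint from $P^*$.
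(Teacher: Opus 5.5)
Your essentiality argument is correct and matches the paper's (Lemma~\ref{at1}(2)). The genuine gap is where you put $K^*$, and it is fatal for hyperbolicity for \emph{every} knot you propose, not just the cable. Let $W=N(K_1)\cup V$; this is a solid torus with core $K_1$ that misses $K_2,\dots,K_N$. For any knot $K^*\subset V$, the torus $\partial W$ is incompressible in $X_{L^*}$:
\begin{itemize}
\item outside, it is parallel to $\partial N(K_1)$ in the hyperbolic exterior $X_L$;
\item inside, a compressing disk would be a meridian disk of $W$ disjoint from its core $K_1$, which is impossible.
\end{itemize}
It is also not boundary parallel, since it separates $K_1\sqcup K^*$ from the $N-1\geq 2$ remaining components (cf.\ Lemma~\ref{large}(1)). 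So $X_{L^*}$ is toroidal. In fact $\partial W$ is exactly the gluing torus in your decomposition $X_{L^*}=X_L\cup_\partial(V\setminus N(K^*))$, so replacing the cable by a Whitehead-type pattern changes nothing.

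The Dehn filling repair cannot get started either. If the auxiliary unknot lies in $V$, the same argument makes $\partial W$ essential in the unfilled exterior, so there is no hyperbolic manifold for Thurston's theorem to fill. If the unknot leaves $V$, you lose the local model that produced $P^*$ and its slopes, which is precisely the step you leave unverified. Your horizontal-pants model in the cable space does give a pants of the right type. Your minimality sketch is too vague to check, but that is secondary.

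The missing idea is to route $K^*$ through $P$ around another component, so that no torus can isolate $K_1\sqcup K^*$ from the rest of the link. The paper's construction is:
\begin{itemize}
\item $K^*=\gamma_P\cup\gamma_1$, where $\gamma_P\subset P$ is an arc with both ends on $\partial_1P$ that separates $\partial_2P$ from the $\partial_iP$ with $i\geq 3$, and $\gamma_1$ is a spanning arc of $\partial N(K_1)$ cut along $\partial_1P$.
\item Adding a band along $\gamma_1$ turns $P$ into an $N$-punctured projective plane with core $K^*$. Its trace in $X_{L^*}$ is $P^*$, with $\Delta(r_1^*,r_1)=2$ and a half-integral slope on $K^*$.
\item Replacing $\gamma_1$ by $\gamma_1+n\,r_1$ gives the infinite family, with $p_1^*$ large for all but one $n$.
\end{itemize}
Your cable is essentially the degenerate case of this construction in which $\gamma_P$ is boundary parallel in $P$.

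Minimality of $(L^*,P^*)$ then follows from two facts: the parity rule applied against $P\cap X_{L^*}$, which has meridional boundary on $K^*$, and $|K^*\cap P|=1$. Hyperbolicity follows from Lemma~\ref{large}(2) together with an analysis of an essential torus $T$ against $P^*$:
\begin{itemize}
\item $X(P^*)\approx X(P)\cup(\text{1-handle})$ is atoroidal, so $T\cap P^*\neq\emptyset$;
\item minimality forces $T$ to cut off an annulus of $P^*$ and to bound a solid torus containing $L^*$;
\item a meridian disk of that solid torus misses $L$ but meets $K^*$, and counting its intersections with a piece of $P^*$ gives the contradiction.
\end{itemize}
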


It follows from the proof of Theorem~\ref{thm3} that the knot $K^*$ is a suitable cable of some component of $L$ which can be chosen so that the integer $p^*_1$ is arbitrarily large.

\medskip
Geometrically realizing in a hyperbolic link exterior a surface $X_0(p_1,\dots,p_N)$ with $N\geq 3$ and all $p_i\geq 2$  via Theorem~\ref{thm3} requires that $p_j=2$ for some $j$, and for $N=3$ this condition is necessary by Theorem~\ref{thm1}. It would be interesting to know if the condition $p_j=2$ for some $j$ is also necessary for all $N\geq 4$.

Our results deal only with geometrically realizing spanning planar surfaces with large boundary slopes in hyperbolic link exteriors, but do not shed light on whether or how nonspanning planar surfaces with large boundary slopes can be realized.

\medskip
The paper is organized as follows. Section~\ref{pre} contains the notation, definitions and various constructions we use throughout. Primitive, power and Seifert circles on the boundary of a genus two handlebody are introduced and their properties used to further analyze the structure of the exterior of a hyperbolic \EM\ knot $K\subset\mS^3$ as presented in the work of C.\ Gordon and J.\ Luecke \cite{gordonlu6}. In particular, using the results in \cite{gordonlu6}, it is proved in Lemma~\ref{lemG} that the Seifert circle $r_1\subset\partial H$ in item (T3)(i) above {\it splits} in $H$: that is, there is a nontrivial separating disk in $H$ which intersects the circle $r_1\subset\partial H$ transversely and minimally in 2 points. It is this property of the slope $r_1$ that makes possible the construction in \S\ref{seif2} of the core circles $K_2\sqcup K_3\subset H$ indicated in (T3)(ii) and of the collection of link pairs $(L,P)$ in \S\ref{mcL}.

Section~\ref{3comp} presents the general properties of 3-component links $L$ in $\mS^3$ whose exterior $X_L$ contains a spanning planar surface $P$ with large boundary slopes. These properties are then used to give a complete set of conditions in Lemma~\ref{lemB} for such a link to be hyperbolic, which include that the exterior $X(P)\subset X_L$ be a genus two handlebody.

Beyond Lemma~\ref{lemB},
the proof of Theorems~\ref{thm1} in Section~\ref{3comp2H} requires the following technical facts: (A) the uniqueness, up to isotopy, of the incompressible twice punctured torus in the exterior of a hyperbolic \EM\ knot with boundary the half-integral toroidal surgery slope $r$, established in Proposition~\ref{torus} of Section~\ref{uniqueT}, and (B)
the classification of sets of 3 mutually disjoint and nonparallel primitive circles on the boundary of a genus two handlebody, 
given in Proposition~\ref{3prims} and Lemma~\ref{3prims2} of Section~\ref{claprim}, 
and, in particular, of (C) the handlebody exterior $X(P)$ of a pants $P\subset X_L$ in a 3-component link exterior whose large boundary slopes produce the 3 primitive circles in $\partial X(P)$, 
given in Proposition~\ref{coprim2} of Section~\ref{claprim2}.

Specifically, in the case (C) of the handlebody exterior $X(P)\subset X_L$ in (C), 
Proposition~\ref{coprim2} states that
the fact that the link $L$ lies in $\mS^3$ implies the existence of a nonseparating disk in $X(P)$ which intersects the set of 3 primitive circles in $\partial X(P)$ minimally in two points. The proof of Proposition~\ref{coprim2} is based on the analysis of a genus two Heegaard decomposition of $\mS^3$ produced by the pants $P\subset X_L$ via T.\ Kaneto's \cite{kaneto2}
classification of the {\it geometric presentation} of $\pi_1(\mS^3)$ induced by such Heegaard decompositions of $\mS^3$.

The proof of Theorem~\ref{thm2} in Section~\ref{geo} uses the constructions in \cite[Theorems 1.1 and 1.2]{eudave9} along with explicit geometric realizations in $\mS^3$ of a spanning pants $P=X_0(p_1,p_2,p_3)$ with handlebody exterior. 
The motivation behind these geometric realizations comes from the constructions used in the proofs of Proposition~\ref{coprim2} in Section~\ref{claprim2}. 
The arguments in the proofs of Theorems~\ref{thm1} and \ref{thm2} are then combined in \S\ref{proof5} into a proof of Theorem~\ref{thm5}. 

Section~\ref{ell4} is devoted to the proof of Theorem~\ref{thm3}. The extension process from a link pair $(L,P)$ to a larger pair $(L^*,P^*)$ uses the condition that the initial link pair $(L,P)$ is minimal, a property that is inherited by the proposed link pair ($L^*,P^*)$, and which makes it possible to prove the incompressibility of the spanning planar surface $P^*\subset X_{L^*}$ and the hyperbolicty of the link $L^*\subset\mS^3$ when the initial link $L$ is hyperbolic. Since the link pairs in the collection $\mc{L}$ are minimal, the second part of  Theorem~\ref{thm3} proceeds by induction. 

The last two sections are devoted to the proofs of several technical facts needed in the previous sections.
Section~\ref{prims2} contains the proofs of Proposition~\ref{3prims}
on the classification of the embeddings of 3 mutually disjoint and nonparallel primitive circles on the boundary of a genus two handlebody $H$. The classification in the case of 2 primitive circles was obtained by J.\ Berge in \cite[Proposition 5.1]{berge2}; we use a variation of the approach in \cite{berge2} which is tailored to efficiently handle the case of 3 primitive circles. The special case when $H$ is obtained from the handlebody exterior of a surface $X_0(p_1,p_2,p_3)$ geometrically realized in $\mS^3$ is handled in  Proposition~\ref{coprim2} and further restricts the possible embeddings of the primitive circles in $H$.

In Section~\ref{uniqueT} the uniqueness up to isotopy of the incompressible twice punctured torus $T$ with half-integral slope $r$ in the exterior of a hyperbolic \EM\ knot $K\subset\mS^3$ indicated in item (T0) above is established in Proposition~\ref{torus}. It is well known that the incompressible torus obtained after surgery on $K$ along the slope $r$ is unique up to isotopy; one reason is that $\wh{T}$ separates $X_{K}(r)$ into Seifert fiber spaces of the form $\mD^2(*,*)$ whose regular fibers in $\wh{T}$ intersect transversely in one point. The uniqueness of $T$ in $X_K$ requires a more involved argument; in fact, the proof of Proposition~\ref{torus} relies on the fact that the slope $r\subset\partial X_K$ is a {\it split} Seifert circle in both handlebody closed components of $X_K\setminus T$.

\subsection{Acknowledgments} We want to thank Makoto Ozawa for helpful discussions during the initial stages of this project.  We also thank the organizers of the conference {\it Knots with special properties:
a conference to celebrate
Mario \EM's 60th birthday (2023)} for their support in presenting an early version of some of the results in this paper.

\section{Preliminaries}\label{pre}

For definitions of basic concepts in 3-manifold topology, knot theory and combinatorial group theory see 
\cite{hatcher1}, 
\cite{hempel}, 
\cite{jaco}, 
\cite{mks}. 
We work in the PL category. 

The union of disjoint sets $A,B$ is denoted by $A\sqcup B$.
For a set or topological space $A$, $|A|$ denotes the cardinality or the number of connected components of $A$.

Unless otherwise stated, we assume that manifolds are compact and orientable and submanifolds are properly embedded in the ambient manifold. For a manifold $M$ and a subset $A\subset M$, $\cl(A)$, $\intr(A)$ and $N(A)$ denote the closure, interior and regular neighborhood in $M$ of the subset $A$, respectively. The {\it frontier} of a codimension zero submanifold $A\subset M$ is the submanifold $\fr(A)=\cl[\partial N(A)\setminus\partial M]$; thus $\fr(A)$ is a codimension 1 submanifold properly embedded in $M$.

We refer to the closure in $M$ of a component of $M\setminus A$ as a {\it closed component of $M\setminus A$}.

Let $S$ be a surface. 
A 1-submanifold $\Gamma\subset S$ is {\it nontrivial} if 
no component of $\Gamma$ bounds a disk in $S$ or is parallel to $\partial S$. 

Let $M$ be a 3-manifold and $S\subset M$ a surface. 
We denote the boundary components of $S$ by $\partial_1S,\partial_2S,\dots$, so that $\partial S=\partial_1S\sqcup\partial_2S\sqcup\cdots$. A disk $D\subset M$ is a {\it compression disk} for $S$ if $D\cap S=\partial D$ and $\partial D$ does not bound a disk in $S$; $D$ is a {\it boundary compression} disk for $S$ if $D\cap S=(\partial D)\cap S$ is a nontrivial arc in $S$ and $\partial D\setminus S$ is an arc in $\partial M$ which does not cobound a disk with $\partial S$ in $\partial M$.  The surface $S$ is {\it compressible} or {\it boundary compressible} in $M$ if it 
admits a compression or boundary compression disk, respectively, and otherwise it is {\it incompressible} or {\it boundary incompressible}.

Two disjoint and connected codimension 1 submanifolds $A,B$ in a manifold $M$ are {\it parallel} if some closed component of $M\setminus(A\sqcup B)$ is homeomorphic to $A\times[0,1]$, with $A$ and $B$ corresponding to $A\times\{0\}$ and $A\times\{1\}$, respectively.

A surface in $M$ other than a 2-sphere is {\it essential} if it is incompressible and not parallel into $\partial M$; notice that the essential surface may be boundary compressible. 
A 2-sphere is essential in $M$ if it does not bound a 3-ball in $M$ and is not parallel to $\partial M$.

For $\Gamma$ a closed 1-submanifold of $\partial M$, $M(\Gamma)$ denotes the 3-manifold obtained by adding a 2-handle to $M$ along each of the components of $\Gamma$ and closing any resulting 2-sphere boundary components with 3-balls. For a surface $S\subset M$ with 
each component of $\partial S\subset\partial M$ having the same slope  as some component of $\Gamma$, we denote by $\wh{S}\subset M(\Gamma)$ the surface obtained by capping off each component of $\partial S$ with a disk in $M(\Gamma)$.

The intersection between two submanifolds is said to be {\it minimal} if, when compared to the intersections between all their isotopic copies, the submanifolds intersect transversely in as few components as possible. In particular, if two surfaces $F$ and $G$ in a 3-manifold $M$ intersect minimally, we may assume that $\partial F$ and $\partial G$ intersect minimally in $\partial M$.

A 3-manifold $M$ is {\it toroidal} or {\it annular} if, respectively,  there is an essential torus or annulus in $M$; otherwise, $M$ is {\it atoroidal} or {\it anannular}. By the work of W.\ Thurston \cite{thurs2}, a compact 3-manifold with each boundary component a torus is {\it hyperbolic} (ie, its interior admits a complete hyperbolic metric structure of constant negative curvature) iff the manifold is irreducible, {\it boundary irreducible} (ie, its boundary components are incompressible), atoroidal and anannular.

For $F$ a compact orientable surface, typically a disk $\mD^2$, an annulus $\mA^2$ or a 2-sphere $\mS^2$, the notation $F(p,q,\dots)$, or simply $F(*,*,\cdots)$, is used to represent a Seifert fiber space over $F$ with singular fibers of indices $p,q,\dots\geq 2$. 

In particular, for $p\geq 2$ a manifold of the form $L_p=\mS^2(p)$ is a {\it lens space}; $\mS^3=\mS^2(1)$ and $\mS^2\times\mS^1=\mS^2(0)$ are not considered lens spaces.

Let $K_0\subset\mS^3$ be a knot with regular neighborhood $N(K_0)\subset\mS^3$. A circle $K\subset\partial N(K_0)$ of slope $p/q$ with $q\geq 2$ is a {\it $(p,q)$-cable of $K_0$}, or simply a {\it $q$-cable} of $K_0$.
In the special case that $K_0$ is the trivial knot, $K$ is a {\it torus knot of type $(p,q)$} if $|p|,q\geq 2$, hence a nontrivial knot; this is the case iff $X_K=\mD^2(|p|,|q|)$. For convenience, in some cases we will allow the values $|p|=1$ or $q=1$ so that the $(p,q)$ torus knot is trivial.

\subsection{Genus two handlebodies}
In this section we discuss various concepts and constructions available for a genus two handlebody $H$. Sections \S\ref{pripo}--\S\ref{seif1} are based on the content presented in \cite[Sections 3 and 6]{valdez14}.

\subsubsection{The free group $\pi_1(H)$}\label{free}
A complete system of disks $x\sqcup y\subset H$ induces a presentation for the fundamental group of $H$
\[
\pi_1(H)=\grp{x,y \ | \ -}
\]
such that an oriented circle $\gamma\subset\partial H$ which intersects $x\sqcup y$ transversely produces an unreduced word $\gamma(x,y)\in\pi_1(H)$ obtained by traversing $\gamma$ following its orientation and recording the consecutive points of intersections of $\gamma$ with $x$ and $y$, following some scheme for the signs of each intersection. The word $\gamma(x,y)$ depends on the choice of a basepoint in $\gamma\setminus(x\sqcup y)$, which may sometimes be omitted for simplicity, and without which the word $\gamma(x,y)$ is determined up to cyclic permutation.

\subsubsection{Primitive and power circles}\label{pripo}
A circle $\gamma\subset\partial H$ is a {\it primitive circle} in $H$ if the word $\gamma(x,y)\in\pi_1(H)$ is primitive, or, equivalently, if there is a disk in $H$ which intersects $\gamma$ minimally in one point.

The circle $\gamma$ is a {\it power circle} in $H$ if the word $\gamma(x,y)\in\pi_1(H)$ is a nontrivial power of some nontrivial element of $\pi_1(H)$.

A primitive or power circle $\gamma\subset\partial H$ is nonseparating in $\partial H$. In fact,
a circle $\gamma\subset\partial H$ is a primitive or power circle in $H$ iff $\partial H\setminus\gamma$ compresses in $H$, in which case there is a unique nonseparating compression disk $D\subset H$ for $\partial H\setminus\gamma$. If  $V\subset H$ is the solid torus closed component of $H\setminus D$, then $\gamma$ homologically runs $p\geq 1$ times around $V$, where $p=1$ if $\gamma$ is a primitive circle and $p\geq 2$ if $\gamma$ is a power circle. In particular, if 
$\gamma'\subset H$ is a core of the solid torus $V\subset H$ then, in $\pi_1(H)=\grp{x,y \ | \ -}$,  $\gamma'(x,y)$ is a primitive word and $\gamma(x,y)=[\gamma'(x,y)]^p$. If $p\geq 2$ then we say that $\gamma$ is a {\it $p$-power circle in $H$.}

\subsubsection{Classification of cyclically reduced primitive words in the rank 2 free group.} \label{prim0}

A {\it frame} for a once punctured torus $F$ consists of two oriented circles $\zeta,\eta\subset F$ which intersect minimally in one point. The surface $F$ deformation retracts onto the union $\zeta\cup\eta$ of the framing circles, which consequently generate the fundamental group and the first integral homology group of $F$. That is, we may identify $\pi_1(F)$ with the rank 2 free group $\grp{\zeta, \eta \ | \ -}$ relative to the base point $\zeta\cap \eta\in F$; and
if $\gamma\subset F$ is any nonseparating circle then $\gamma=m\zeta+n\eta\in H_1(F;\mZ)$ for some integers $m,n$ with $\gcd(m,n)=1$.

For integers $A,B$ with $\gcd(A,B)=1$ there is a cyclically reduced primitive word $w_{p,q}(\zeta,\eta)$ in the group $\pi_1(F)$, unique up to cyclic order (see \cite{mks}), 
with abelianization $A\zeta+B\eta$ in $H_1(F;\mZ)=\mZ \zeta\oplus \mZ \eta$ and which represents the homotopy class of the circle $\gamma=A\zeta+B\eta\subset F$.

Here, by $\gamma=A\zeta+B\eta$ we mean the circle constructed 
by smoothing all intersections between $A$ and $B$ disjoint parallel copies of the circles $\zeta$ and $\eta$, respectively, following their orientations ($-\zeta$ for $A<0$ etc).
We thus have, for instance, $w_{1,B}(\zeta, \eta)=\zeta\eta^B$ and $w_{A,1}(\zeta, \eta)=\zeta^A \eta$.

\medskip
For $A\neq 0\neq B$ we have the identity
\[
w_{A,B}(\zeta, \eta)=w_{|A|,|B|}(\zeta^{\delta},\eta^{\ve})
\quad\text{for }\delta=\text{sgn}(A)\text{ and } \ve=\text{sgn}(B)
\]

A characterization of the primitive word $w_{p,q}(\zeta,\eta)$, sufficient for our purposes, is given in the next result.

\begin{lem}(\cite{cohen}) \label{pri}
In any cyclically reduced primitive word $w_{p,q}(\zeta,\eta)$ different from $\zeta^{\pm1}$ or $\eta^{\pm1}$, and for some $\{u,v\}=\{\zeta,\eta\}$, 
the exponents in $u$ are all equal to $+1$ or all equal to $-1$, while the exponents in $v$ are all nonzero of the form $m$ or $m+1$ for some integer $m$. 
\qed
\end{lem}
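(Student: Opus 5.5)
The plan is to prove the characterization from the geometry of the circle $\gamma=A\zeta+B\eta$ on the once punctured torus $F$, reading the word $\gamma(\zeta,\eta)$ off its intersections with dual arcs. Cutting $F$ along two disjoint properly embedded arcs $\zeta^*,\eta^*$, dual to $\eta$ and $\zeta$ respectively, gives a disk. Then $\pi_1(F)=\grp{\zeta,\eta\ |\ -}$, and the cyclic word of a circle is obtained by recording its crossings with $\eta^*$ (letters $\zeta^{\pm1}$) and with $\zeta^*$ (letters $\eta^{\pm1}$). We may assume $A,B\neq 0$, since otherwise $\gcd(A,B)=1$ forces the word to be $\zeta^{\pm1}$ or $\eta^{\pm1}$. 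By the identity $w_{A,B}(\zeta,\eta)=w_{|A|,|B|}(\zeta^{\delta},\eta^{\ve})$ we may also assume $A,B>0$.

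First, we represent $\gamma$ as the straight line of slope $B/A$ on the torus $\mathbb{R}^2/\mZ^2$, punctured at a point off the line. Its image is then a simple closed curve in $F$ in the class $A\zeta+B\eta$, and it meets the two dual arcs minimally. Since $A,B>0$, every crossing with a given dual arc has the same sign. Hence the word uses only positive powers of $\zeta$ and $\eta$, and this cyclic word is reduced and cyclically reduced. By the uniqueness of $w_{A,B}$ up to cyclic order, it suffices to check the exponent pattern for this explicit word.

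Next, suppose without loss of generality that $A\le B$; otherwise swap the roles, by symmetry. The line crosses the vertical sides $A$ times and the horizontal sides $B$ times. Between two consecutive crossings of the vertical side, i.e.\ consecutive letters $\zeta$, the line rises by exactly $B/A$. It therefore crosses horizontal lines either $\lfloor B/A\rfloor$ or $\lfloor B/A\rfloor+1$ times. So the word has the form $\zeta\eta^{m_1}\zeta\eta^{m_2}\cdots\zeta\eta^{m_A}$ with each $m_i\in\{m,m+1\}$, $m=\lfloor B/A\rfloor\ge 1$. In particular every $m_i$ is nonzero, as the lemma requires.

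This gives $u=\zeta$ with all exponents $+1$, and $v=\eta$ with exponents $m$ or $m+1$. Undoing the sign normalization turns the exponents into all $\delta$ and $\ve m_i$, which is still of the stated form, with $m$ replaced by $-m-1$ in the negative case. The main obstacle is justifying that the straight-line circle represents the unique cyclically reduced primitive word. This follows because it is a simple closed curve in the right homology class, and on a once punctured torus simple nonseparating curves are determined up to isotopy by homology. Its word is cyclically reduced since all letters of each generator have the same sign, so no cancellation occurs. Alternatively, one can cite the Christoffel-word description of primitive elements in \cite{cohen}, as the paper does.
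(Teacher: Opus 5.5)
Your argument is correct, and it takes a genuinely different route from the paper, which gives no proof: the lemma is simply imported from \cite{cohen}.

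You instead give the geometric cutting-sequence (Christoffel-word) proof. You use the uniqueness, up to cyclic order, of the cyclically reduced primitive word with abelianization $(A,B)$. The paper already takes this from \cite{mks}. It reduces the lemma to a single explicit representative: the straight line of slope $B/A$ on the square torus, with the puncture at the corner. Its crossings with the two sides give a positive word, hence a cyclically reduced one. For $A\le B$, the word has all $\zeta$-exponents equal to $1$ and $\eta$-blocks of length $\lfloor B/A\rfloor$ or $\lfloor B/A\rfloor+1$.

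What your route buys:
\begin{itemize}
\item It is self-contained within the paper's own setting of frames $\zeta,\eta$ on a once punctured torus and circles $A\zeta+B\eta$.
\item It identifies $m$ explicitly as $\lfloor |B|/|A|\rfloor$ (or its negative shift $-m-1$). This is exactly the form used in the remark right after the lemma (factors $\zeta\eta^{\ell}$, $\zeta\eta^{\ell+1}$ with $\ell>0$).
\end{itemize}
The citation buys brevity.

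Two small remarks. First, the isotopy classification of simple closed curves on $F$ is more than you need. It suffices that the word of a nonseparating simple closed curve is primitive, since the curve is one member of a frame, and that its abelianization is $(A,B)$; the uniqueness from \cite{mks} then does the rest. Second, the conclusion must be read for the cyclic word. A linear cyclic permutation can split an $\eta$-block: for example $\eta\zeta\eta^3\zeta\eta^3$ is a cyclically reduced primitive word with linear $\eta$-exponents $1,3,3$. Your argument handles this correctly by working with cyclic words throughout.
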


In particular, for $A,B\geq 2$, up to cyclic order the word $w_{A,B}(\zeta, \eta)$ is a product of $A$ factors of the form $\zeta \eta^{\ell}$ and $\zeta \eta^{\ell+1}$ if $A<B$, and of $B$ factors of the form $\zeta^{\ell} \eta$ and $\zeta^{\ell+1} \eta$ if $B<A$, for some integer $\ell>0$.

\subsubsection{}\label{many}
In $H$, a pair of mutually disjoint nontrivial circles $\alpha,\beta\subset\partial H$ are said to be
\begin{itemize}
\item
{\it separated circles} if there is a separating disk $E$ in $H\setminus(\alpha\sqcup\beta)$ which separates $\alpha$ and $\beta$; by \cite[Lemma 3.4]{valdez14}, each circle $\alpha,\beta$ is necessarily primitive or a power in $H$,

\item
{\it coannular circles} if  they are mutually nonparallel in $\partial H$ and cobound an annulus in $H$; by \cite[Lemma 3.4]{valdez14}, two mutually disjoint and nonparallel circles $\alpha\sqcup\beta\subset\partial H$ are coannular in $H$ iff $\partial H\setminus(\alpha\sqcup\beta)$ compresses in $H$ along a nonseparating disk $D\subset H$, which is unique up to isotopy, and are therefore both primitive or both a common power in $H$,

\item
{\it basic circles} if the words $\alpha(x,y),\beta(x,y)\in\pi_1(H)$, relative to a common basepoint, form a basis of the free group $\pi_1(H)$; in particular, basic circles are not mutually isotopic in $H$ and hence not coannular. By the 2-handle addition theorem, this is equivalent to saying that the circles $\alpha,\beta$ are separated and primitive (see \cite[Section 3]{valdez14}).  
\end{itemize}

\subsubsection{}\label{coprim}
If $\alpha,\beta,\gamma$ are mutually disjoint and nonparallel nontrivial circles in $\partial H$, we say that {\it $\alpha$ and $\beta$ are coprimitive away from $\gamma$} if there is a disk $D\subset H$ which intersects each circle $\alpha,\beta$ minimally in one point and is disjoint from $\gamma$. 

It follows that each circle $\alpha,\beta$ is primitive in $H$ and $D$ is the unique compression disk of $\partial H\setminus\gamma$, whence $\gamma$ is a primitive or power circle in $H$ by \S\ref{pripo}.

\subsubsection{Companion annuli and solid tori}\label{comp1}
Let $\gamma\subset\partial H$ be a nontrivial circle with annular regular neighborhood $N(\gamma)\subset\partial H$. An annulus $A$ properly embedded in $H$ with $\partial A=\partial N(\gamma)$ is a {\it companion annulus of $\gamma$} if $A$ is not parallel to $N(\gamma)$ in $H$, that is, if $A$ is an essential separating annulus. By \cite[Lemmas 3.2 and 3.3]{valdez14}, a circle $\gamma\subset\partial H$ has a companion annulus $A$ iff $\gamma$ is a power $p\geq 2$ circle in $H$, in which case 
\begin{itemize}
\item
the circle $\gamma$ is nonseparating in $\partial H$,
\item
$A$ is unique up to isotopy,

\item
the annuli $A$ and $N(\gamma)$ cobound a {\it companion solid torus $V\subset H$,}

\item
the closed components of $H\setminus A$ are the solid torus $V$ and a genus two handlebody $H_A$, such that the core of $A=V\cap H_A$ runs $p\geq 2$ times around $V$ and is a primitive circle in $H_A$.
\end{itemize}

\subsubsection{}\label{comp2}
By \cite[Lemma 3.5]{valdez14}, if $\alpha,\beta\subset\partial H$ are mutually disjoint circles, attaching two solid tori $V,W$ to $H$ so that $V\cap H=\partial V\cap\partial H$ and $W\cap H=\partial W\cap\partial H$ are annular neighborhoods in $\partial H$ of $\alpha$ and $\beta$, respectively, produces a genus two handlebody $H'=H\cup V\cup W$ iff the circles $\alpha,\beta$ are basic in $H$.

We say that the manifold $H'=H\cup V\cup W$ is obtained by {\it attaching solid tori $V,W$ to $H'$ along the circles $\alpha\sqcup\beta\subset\partial H'$.}

Alternatively, if $\alpha,\beta\subset\partial H$ are separated power circles in $H$ then their companion annuli $A,B\subset H$ can be isotoped to be mutually disjoint, in which case the closed components of $H\setminus(A\sqcup B)$ consist of the companion solid tori of $\alpha,\beta$ in $H$ and a genus two handlebody $H''\subset H$ such that the cores of the annuli $A\sqcup B\subset\partial H''$ are basic circles in $H''$.

\medskip
On the other hand, if the circle $\alpha\subset H$ is neither primitive nor a power in $H$ then the manifold $H\cup V$ is irreducible and boundary irreducible, and $H\cup V$ is a handlebody iff $\alpha$ is primitive in $H$.

\subsubsection{Seifert circles}\label{seif1}
A circle $\gamma\subset\partial H$ is a {\it Seifert circle} in $H$ if $H(\gamma)=\mD^2(p,q)$ for some integers $p,q\geq 2$. 

By \cite[Lemma 6.7]{valdez14}, any power circle in $\partial H\setminus \gamma$ is a regular fiber of $H(\gamma)=\mD^2(p,q)$.

We say that the Seifert circle $\gamma$ {\it splits in $H$}, or that $\gamma$ is a {\it split Seifert circle}, if there is a nontrivial separating disk $E$ in $H$ which intersects $\gamma$ minimally in two points. We refer to $E$ as a {\it splitting disk} for $\gamma$.

The closed components of $H\setminus E$ are two solid tori whose meridian disks $x,y$, when isotoped to be disjoint from $E$, form the {\it complete disk system $x\sqcup y\subset H$ induced by $E$}. After isotopying $\gamma$ in $\partial H$ to intersect $x\sqcup y\sqcup E$ minimally, $|\gamma\cap E|=2$ and $\gamma$ intersects each disk $x,y$ coherently in $p,q\geq 2$ points, respectively, so that $\gamma(x,y)=x^py^q$ in $\pi_1(H)=\grp{x,y \ | \ -}$. Moreover, there are unique power circles $\alpha,\beta\subset\partial H\setminus(\gamma\cup E)$ separated by $E$ with $\alpha(x,y)=x^p$ and $\beta(x,y)=y^q$.
We then say that $\gamma$ is a {\it split Seifert circle of type $(p,q)$.}

The situation is represented in Fig.~\ref{oz80}, top, where the circles $\alpha,\beta$ intersect each disk $x,y$ coherently in $p,q\geq 1$ points, respectively, with $p=2$ and $q=1$ shown for simplicity.

\medskip
Alternatively, let $P$ be a pants with $\partial P=\partial_1P\sqcup\partial_2P\sqcup\partial_3P$.
Then $H'=P\times [0,1]$ is a genus two handlebody such that, for $P=P\times\{0\}\subset\partial H'$, the circles $\partial_1P\sqcup\partial_2P\subset\partial H'$ are basic circles in $H'$. If $c\subset P$ is an arc that separates the circles $\partial_1P\sqcup\partial_2P$ then $E=c\times[0,1]$ is the unique disk in $H'$ that separates the circles $\partial_1P\sqcup\partial_2P$, and $E$ intersects $\partial_3P\subset\partial H'$ minimally in two points. It follows from \S\ref{comp2} that attaching solid tori $V_i$ to $H'$ along $\partial_iP$, so that $\partial H_i$ runs $p_i\geq 2$ times around $V_i$, produces a genus two handlebody $H=H'\cup V_1\cup V_2$ such that $\partial_3P\subset\partial H$ is a split $(p_1,p_2)$ Seifert circle with $E$ as splitting disk.

\begin{figure}
\Fig{.8}{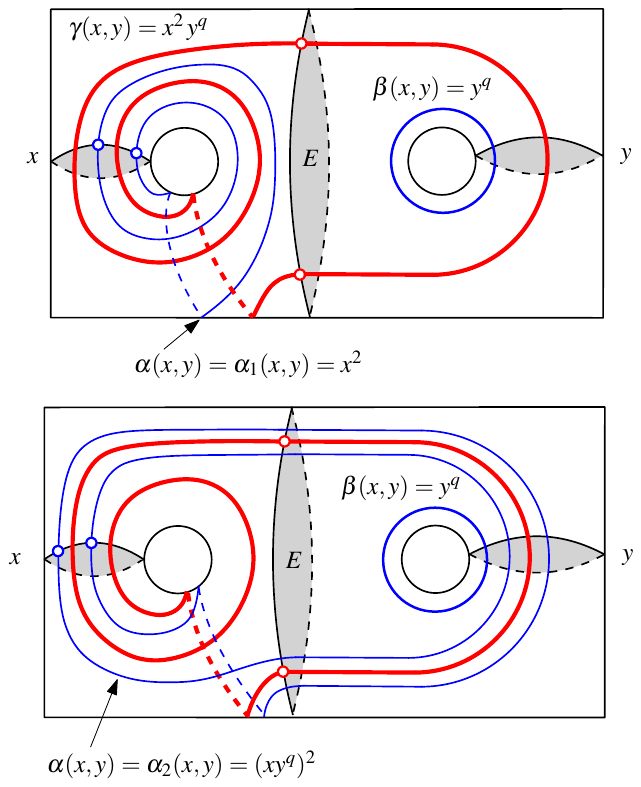}{A split Seifert circle $\gamma\subset\partial H$ and separated power circles $\alpha,\beta\subset\partial H\setminus\gamma$.}{oz80}
\end{figure}

\medskip
In the case where $p,q\geq 3$ the separating disk $E$, and hence the power circles $\alpha\sqcup\beta\subset\partial H\setminus (\gamma\cup E)$, are unique up to isotopy. However, if $p=2$ and $q\geq 3$ then, up to isotopy in $\partial H\setminus\gamma$, there is a unique $q$-power circle $\beta$ but there are exactly two nonisotopic versions $\alpha_1,\alpha_2\subset\partial H\setminus(\gamma\sqcup\beta)$ of the $2$-power circle $\alpha$. The situation is represented in Fig.~\ref{oz80}, where the disk $E$ separating the power circles $\alpha_1,\beta$ is shown (top) but not the disk separating $\alpha_2,\beta$ (bottom). 

These facts follow from analyzing the graphs of minimal intersection between the companion annuli in $H$ for two power circles in $\partial H\setminus\gamma\,$; we leave the details to the interested reader.

\medskip
In the case $p=q=2$ we have that $H(\gamma)=\mD^2(2,2)$ is the regular neighborhood of a once-punctured Klein bottle and so there are infinitely nonisotopic pairs of separated 2-power circles in $\partial H\setminus\gamma$ (see \cite[Section 2]{valdez7}); however, we shall not make use of this case. 

\subsubsection{Link pairs induced by a split Seifert circle}\label{seif2}
Let $\gamma\subset\partial H$ be a split Seifert circle and let
\begin{itemize}
\item
$E\subset H$ be any splitting disk for $\gamma$,
\item
$\alpha_1,\alpha_2\subset\partial H\setminus(\gamma\cup E)$ be the two power circles separated by $E$, with $\alpha_i$ a $p_i$-power circle for some $p_i\geq 2$,

\item
Let $A_1,A_2$ and $V_1,V_2$ be companion annuli and corresponding solid tori of the power circles $\alpha_1,\alpha_2$, respectively. We may assume that $V_1$ and $V_2$ are disjoint from $E$ and $\gamma$, and hence from each other, with $V_i\cap\partial H=N(\alpha_i)$. Thus $\partial V_i=N(\alpha_i)\cup_{\partial} A_i$ and each solid torus $V_i$ is isotopic in $H$ to some closed component of $H\setminus E$.

\item
Let $\alpha'_i\subset \intr\,A_i$ be a core circle.
\end{itemize}

Then the closed components of $H\setminus (A_1\sqcup A_2)$ are the solid tori $V_1,V_2$ and a genus two handlebody $H'\subset H$, such that $\gamma\sqcup A_1\sqcup A_2\subset\partial H'$ and $E\subset H'$ is a separating disk intersecting $\gamma\subset\partial H'$ minimally in two points.

By \S\ref{comp2}, the circles $\alpha'_1,\alpha'_2$ are basic in $H'$; so if $x\sqcup y\subset H'$ is the complete disk system induced by $E$ then we may assume that $\alpha'_1(x,y)=x$ and $\alpha'_2(x,y)=y$ in $\pi_1(H')$. Since each arc $\gamma\cap\partial V_i$ is disjoint from the circle $\alpha'_i\subset\partial V_i$, it follows that $\gamma(x,y)=xy^{\pm1}$ and hence that each pair of circles 
$\alpha'_1,\gamma$ and $\alpha'_2,\gamma$ is also basic in $H'$.

Therefore, if $N(\gamma)\subset\partial H'$ is an annular neighborhood of $\gamma$ disjoint from $N(\alpha'_1)\sqcup N(\alpha'_2)\subset\partial H$ and $P'$ is any closed component of $\partial H'\setminus [N(\alpha'_1)\sqcup N(\alpha'_2)\sqcup N(\gamma)]$ then $P'\subset\partial H'$ is a pants such that
\[
H'=P'\times[0,1]\quad\text{with}\quad P'=P'\times\{0\}
\]
and so we obtain the decomposition
\[
H=H'\cup V_1\cup V_2
\]
The situation is represented in Fig.~\ref{oz80b3}.

\begin{figure}
\Fig{.8}{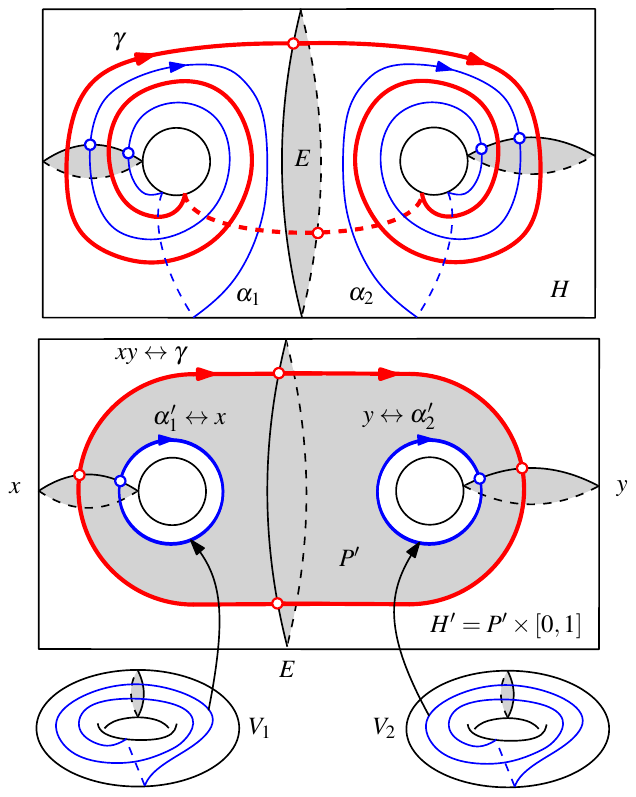}{The split Seifert circle $\gamma\subset\partial H$ (top) in the handlebody $H=H'\cup V_1\cup V_2$ (bottom).}{oz80b3}
\end{figure}

For $i=1,2$ let $K_i\subset \intr\,V_i$ be a core circle, identify $N(K_i)\subset H$ with $V_i$, and define the link $L_E=K_1\sqcup K_2\subset H$. Notice that $K_1$ and $K_2$ are core circles of $H$. 

Then 
\[
P_E=P'\times\{0\}\subset P'\times[0,1]=H'\subset H
\]
is a pants in $H$ with boundary
\[
\partial_0P=\gamma\subset \partial H
\quad\text{and}\quad
\partial_iP\subset\partial N(K_i)
\text{ a circle of slope }a_i/p_i \text{ in }N(K_i), \ i=1,2
\]

\medskip
We call $(L_E,P_E)$ the {\it link pair in $H$ induced by the split Seifert circle $\gamma\subset\partial H$ and the splitting disk $E$.} 

\medskip
By \S\ref{seif1}, if $p,q\geq 3$ then $\gamma$ has a unique splitting disk $E\subset H$ and so $\gamma$ induces a unique link pair $(L,P)$ in $H$. If $p=2$ and $q\geq 3$ then there are, up to isotopy, two splitting disks $E,E'\subset H$ for $\gamma$, so $\gamma$ induces two corresponding nonisotopic link pairs $(L_{E},P_{E})$ and $(L_{E'},P_{E'})$ in $H$.

\subsection{Structure of a hyperbolic Eudave-Mu\~noz knot exterior}\label{sMario}

In this section we summarize some of the results in \cite{gordonlu6} related to the proof of Theorem~\ref{thm1}. We adopt the notation of that paper for easier reference.

\medskip
The family of hyperbolic Eudave-Mu\~noz knots $k(\ell,m,n,p)\subset\mS^3$ was introduced in \cite{eudave2} and their properties further developed in \cite{eudave8}; each knot admits a large toroidal surgery slope of the form $r=a/2$. That this family includes all  hyperbolic knots that admit a toroidal surgery and the uniqueness of the large toroidal slope $r$ for each knot was subsequently established in \cite[Theorem 1.1]{gordonlu6}. We summarize several properties and characterizations of Eudave-Mu\~noz hyperbolic knots in the following result.

\begin{lem}\label{char}
Let $K\subset\mS^3$ be a knot.
\begin{enumerate}
\item
If $K$ is an Eudave-Mu\~noz hyperbolic knot then there is a slope $r=a/2$ such that the surgery manifold $X_K(r)$ is toroidal; moreover,
\begin{enumerate}
\item
$X_K$ contains an incompressible twice punctured torus $T$ with boundary slope $r$, such that each closed component $H_1,H_2$ of $X_K\setminus T$ is a genus two handlebody and the torus $\wh{T}\subset X_K(r)$ is incompressible,

\item
$X_K(r)=H_1(r)\cup_{\wh{T}}H_2(r)$, where for each $i=1,2$ the manifold $H_i(r)$ is a Seifert space of the form $\mD^2(p_i,q_i)$ for some integers $p_i,q_i\geq 2$, with $p_i=2$ for some $i\in\{1,2\}$ (\cite[Proposition 5.4]{eudave8}).
\end{enumerate} 

\medskip
\item
If $K$ is a hyperbolic knot and $X_K(r)$ is toroidal for some slope $r=a/p$, $p\geq 2$, then $p=2$, (1)(a) and (1)(b) hold, and $K$ is a hyperbolic Eudave-Mu\~noz knot. (\cite[Theorem 1.1]{gordonlu6})

\medskip
\item
If $X_K$ contains an incompressible twice punctured torus $T$ with  boundary slope $r=a/p$, $p\geq 2$ such that each closed component of $X_K\setminus T$ is a genus two handlebody then $K$ is a hyperbolic  Eudave-Mu\~noz knot, $p=2$, and $T$ satisfies (1)(b). \cite[Corollary 3.16]{ozawa01}
\end{enumerate}
\end{lem}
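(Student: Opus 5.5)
This lemma is a compilation of results from the literature: \cite{eudave2}, \cite{eudave8}, \cite{gordonlu6} and \cite{ozawa01}. So the plan is to verify that each item is exactly what those references state, translating their terminology into the notation of this paper. No new geometric argument should be needed.

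For (1), begin with Eudave-Mu\~noz's construction in \cite{eudave2}. There $K=k(\ell,m,n,p)$ is obtained as the lift of a tangle under a double branched cover, and the toroidal slope $r$ is half-integral. The same construction produces a genus two Heegaard-type splitting of $X_K$ along a twice punctured torus $T$: the Conway sphere of the tangle lifts to $\wh T$, and the two rational-tangle-like sides lift to genus two handlebodies $H_1,H_2$ once $N(K)$ is removed. This gives (1)(a). The incompressibility of $T$ and $\wh T$ follows because each side $H_i(r)$ is a Seifert space $\mD^2(p_i,q_i)$ with $p_i,q_i\geq2$, whose boundary torus is incompressible. For (1)(b) and the condition $p_i=2$ for some $i$, I would cite the explicit list in \cite[Proposition 5.4]{eudave8}, which records the Seifert invariants of the two pieces.

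For (2), invoke \cite[Theorem 1.1]{gordonlu6}. It states that a hyperbolic knot with a nonintegral toroidal slope is an Eudave-Mu\~noz knot, the slope is half-integral, and the slope is unique. Then (1)(a) and (1)(b) follow from (1).

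For (3), invoke \cite[Corollary 3.16]{ozawa01}. The step needing care is checking that its hypotheses match ours. In particular, the incompressibility of $T$ in $X_K$ with both sides genus two handlebodies must be shown to force the hypotheses there. The mechanism is as follows: each $H_i(r)$ is a 2-handle addition to a genus two handlebody, so by the handle addition theorem and Ozawa's analysis one gets $H_i(r)=\mD^2(*,*)$ and $\wh T$ incompressible. Then (2) applies. This matching of hypotheses is the main point to double-check; the remaining steps are direct citations.
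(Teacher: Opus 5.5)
Reading the lemma as a compilation of cited results is exactly what the paper does. It gives no argument beyond the inline references to Eudave-Mu\~noz, Gordon--Luecke and Ozawa, so your treatment of (1) and (2) matches.

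The problem is the mechanism you propose for (3). You want to show that $\wh{T}$ is incompressible with $H_i(r)=\mD^2(*,*)$ and then let (2) apply. But (2) assumes $K$ is hyperbolic. In (3), $K$ is an arbitrary knot, and hyperbolicity is part of the \emph{conclusion}. This is precisely what the paper extracts from (3) in the proof of Lemma~\ref{lemD}, where $K_1$ is not known in advance to be hyperbolic. Toroidality of $X_K(r)$ cannot supply hyperbolicity. Torus knots are excluded easily, since their nonintegral surgeries are Seifert fibered over $\mS^2$ with at most three exceptional fibers, hence atoroidal. Satellite knots, however, often keep the companion torus incompressible after surgery; for example, every nontrivial surgery on a Whitehead double of a nontrivial knot is toroidal. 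So satellites have many nonintegral toroidal slopes. Excluding them needs a genuine argument using the decomposition $X_K=H_1\cup_T H_2$, and your proposal does not contain one.

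Separately, the handle addition theorem only gives that $\wh{T}=\partial H_i(r)$ is incompressible in $H_i(r)$, because $T=\partial H_i\setminus\intr N(r)$ is incompressible in $H_i$. It does not give a Seifert fibration: the exterior of any tunnel number one knot is a 2-handle addition to a genus two handlebody, and it is usually hyperbolic. The description $H_i(r)=\mD^2(*,*)$ is an output of the Gordon--Luecke analysis once hyperbolicity is known, not an input to it. So for (3), quote Ozawa's Corollary 3.16 with its full conclusion, hyperbolicity included, as the paper does, rather than routing it through (2).
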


That the incompressible torus $T\subset X_K$ in items (1)--(3) is unique up to isotopy will be established in Proposition~\ref{torus}.

\subsubsection{}\label{golu1}
Let $K\subset\mS^3$ be a hyperbolic Eudave-Mu\~noz knot and $T\subset X_K$ the incompressible torus in Lemma~\ref{char}(1) with boundary slope of the form $r=a/2$.

Following \cite{gabai03}, for a suitable Heegaard 2-sphere $\wh{Q}$ of $\mS^3$ and $Q$ the planar surface $\wh{Q}\cap X_K$ with meridional boundary slope, the arc components in the graphs of intersection $G_Q=Q\cap T\subset Q$ and $G_T=Q\cap T\subset T$ are essential in $Q$ and $T$.
We then have the following facts from \cite{gordonlu6}:
\begin{enumerate}
\item[(GL-1)] \cite[Theorem 3.1]{gordonlu6}: In the reduced planar graph $\ov{G}_T$ there are parallelism classes $\epsilon,\delta_1,\delta_2$ of negative edges 
and disk faces $f_1\subset H_1$ and $f_2\subset H_2$ of $G_Q$ such that the edges around $\partial f_i$ lie in $\ve\sqcup\delta_i$,

\medskip
\item[(GL-2)]
\cite[Theorem 4.1]{gordonlu6}:
The closed component of the complement of the reduced arcs $\ve$ and $\delta_i$ in $T$ is an annulus with core a circle $\alpha_i$, such that $\alpha_1$ and $\alpha_2$ intersect minimally in one point and represent regular fibers of the Seifert fiber spaces $H_1(r)$ and $H_2(r)$, respectively. The situation is represented in Fig.~\ref{oz19c}.

\begin{figure}
\Fig{.7}{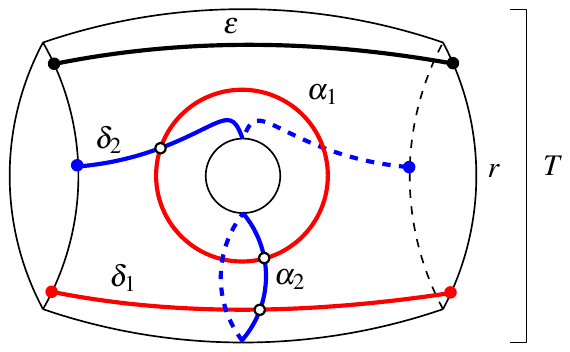}{The arcs $\ve\sqcup\delta_1\sqcup\delta_2$ and the circles $\alpha_1\cup\alpha_2$ in $T$}{oz19c}
\end{figure}

\item[(GL-3)]
For $\{i,j\}=\{1,2\}$ the core of the annulus
\[
B_i=H_i\cap N(K)=\cl[\partial H_i\setminus T]\subset\partial H_i,
\]
is a circle of slope $r$ in $N(K)$, and 
by \cite[Lemma 4.2]{gordonlu6} (its restatement; see also the proof of \cite[Lemma 4.4]{gordonlu6}), 
there is a spanning arc $\gamma_i\subset B_i$ disjoint from the disk face $f_i$ of $G_Q$, that is, disjoint from the arcs $B_i\cap\partial f_i$.
In the special case where $\partial f_i$ consists of only two edges there are two such arcs $\gamma_i$, one of which has the same endpoints as $\delta_i$.

There is a unique arc $\gamma'_i$ in the disk $ T\setminus(\delta_i\cup\ve\cup\alpha_j)$ with $\partial\gamma'_i=\partial\gamma_i$, where $\gamma'_i=\delta_j$ if $\partial\gamma_i=\partial\delta_j$. 
We denote by $\gamma''_i$ the circle $\gamma_i\cup\gamma'_i\subset\partial H_i$. 

The situation is represented in Fig.~\ref{oz19d2} in the case $i=1,j=2$. 
\end{enumerate}

\begin{figure}
\Fig{1}{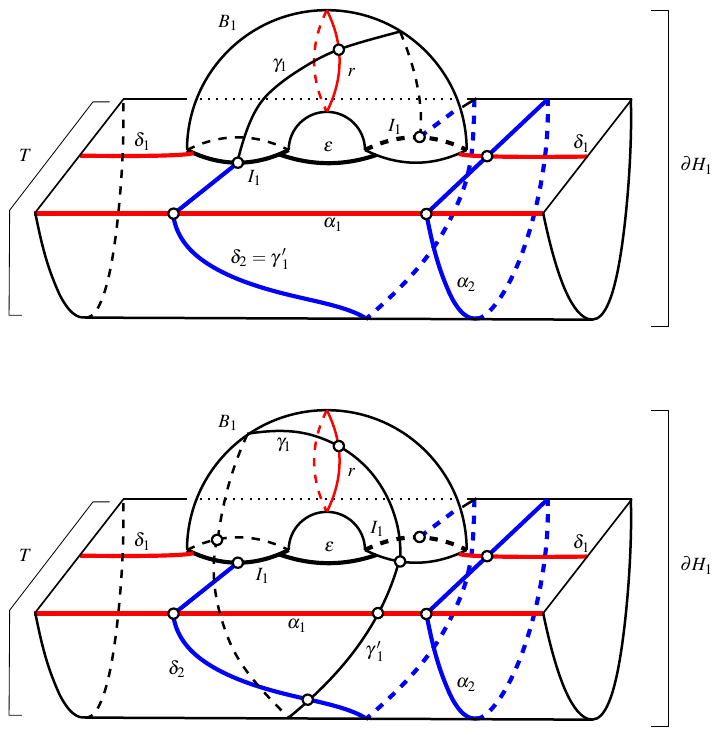}{The possible embeddings of the arcs $\gamma_1\subset B_1$ and $\gamma'_1\subset T$.}{oz19d2}
\end{figure}

The next result restricts the type of Seifert circle that the slope $r$ can be in $H_1$ and $H_2$.

\begin{lem}\label{lemG}
The Seifert circle $r$ splits in $H_1$ and $H_2$. 
\end{lem}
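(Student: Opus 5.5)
By symmetry it suffices to treat $H_1$, so I fix $\{i,j\}=\{1,2\}$ with $i=1$. The idea is to produce an explicit nontrivial separating disk $E\subset H_1$ that meets $r$ in exactly two points, built from the data in (GL-1)--(GL-3).

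\textbf{Step 1: the disk $D$.} Recall that $\partial H_1 = T\cup B_1$, where $B_1$ is an annulus whose core has slope $r$. By (GL-3), the spanning arc $\gamma_1\subset B_1$ is disjoint from the face $f_1$. Together with $\gamma'_1\subset T$, which lies in the disk $T\setminus(\delta_1\cup\ve\cup\alpha_2)$, it forms the circle $\gamma''_1=\gamma_1\cup\gamma'_1\subset\partial H_1$. This circle meets a core of $B_1$, that is, a copy of $r$, exactly once, through the arc $\gamma_1$.

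I would first show that $\gamma''_1$ bounds a disk $D\subset H_1$. The face $f_1$ is a disk in $H_1$ whose boundary runs along edges in $\ve\sqcup\delta_1$ and across arcs in $B_1$. Since $\gamma''_1$ avoids $\partial f_1$, I would cut $H_1$ along $f_1$. By (GL-2), the boundary of $f_1$ realizes the Seifert structure in which $\alpha_1$ is a regular fiber. Using this, the cut-open manifold should be identified as a solid torus (or a product), and $\gamma''_1$ should then be seen to bound a meridian disk of it. This is the Gordon--Luecke argument of \cite[Lemma 4.4]{gordonlu6}, which I would adapt here.

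\textbf{Step 2: the separating disk $E$.} Since $D$ meets $r$ once, $r$ is primitive relative to $D$ in the sense that $D$ is a nonseparating disk. To get a separating disk, I take two parallel copies $D',D''$ of $D$ and band them together along an arc of $\partial H_1$. Concretely, I take $E$ to be the frontier of a regular neighborhood of $D\cup\alpha$, where $\alpha\subset\partial H_1$ is a suitable circle meeting $\partial D$ once and disjoint from $r$; the natural choice is a circle in $T$ built from $\alpha_1$. Then $E$ is separating, and it meets $r$ in exactly the two points coming from $D'\cap r$ and $D''\cap r$.

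\textbf{Step 3: nontriviality and minimality.} The two sides of $E$ are $N(\alpha\cup D)$, a solid torus, and its complement, which is also a solid torus because $H_1$ is a genus two handlebody and $E$ is separating. So it remains to check that $E$ is nontrivial and that $|E\cap r|=2$ is minimal. Minimality is automatic: $|E\cap r|$ is even because $E$ separates, and it cannot be $0$, since then $r$ would be disjoint from a separating disk and hence primitive, a power, or separating. Each of these contradicts $H_1(r)=\mD^2(p,q)$ with $p,q\geq 2$ and $\partial$-irreducible. The same reasoning shows $E$ is nontrivial. Finally, comparing with \S\ref{seif1}, the word $r(x,y)=x^py^q$ is consistent with this description.

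\textbf{Main obstacle.} The hard step is Step 1 together with the choice of $\alpha$ in Step 2: one has to prove that $\gamma''_1$ really bounds a disk in $H_1$, and that a band circle $\alpha$ disjoint from $r$ and meeting $\partial D$ exactly once exists. For the latter, my first attempt would be to take $\alpha$ to be the fiber $\alpha_2$ (or $\alpha_1$) in $T$ and read off its intersections with $\gamma'_1$ from Fig.~\ref{oz19d2}. If neither meets $\gamma'_1$ exactly once, the fallback is to build $E$ directly as $\partial N(D\cup f_1)$ and count its intersections with $r$ using the edge classes.
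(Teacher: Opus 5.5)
\textbf{There is a genuine gap, and it is in Step 1.} The claim that $\gamma''_1$ bounds a disk $D\subset H_1$ is not merely hard to prove; it is false. As you note yourself, $\gamma''_1$ meets $r$ in exactly one point, through $\gamma_1$. Since $r\subset\partial H_1$, any disk $D$ with $\partial D=\gamma''_1$ would meet $r$ in one point. By \S\ref{pripo} this would make $r$ primitive in $H_1$, so $H_1(r)$ would be a solid torus rather than $\mD^2(p_1,q_1)$ with $p_1,q_1\geq 2$, as Lemma~\ref{char}(1)(b) requires. Your own sentence in Step 2, ``since $D$ meets $r$ once, $r$ is primitive relative to $D$'', is exactly this contradiction. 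Consequently cutting $H_1$ along $f_1$ can never exhibit $\gamma''_1$ as a meridian. Step 2 and your fallback $\partial N(D\cup f_1)$ both start from $D$, so neither can be carried out.

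\textbf{The missing idea is to get the separating disk from the $f_1$ side, not the $\gamma''_1$ side.} Your candidate boundary circle with $\alpha=\alpha_1$, namely $\partial N(\alpha_1\cup\gamma''_1)$, is the right circle; only the reason it bounds a disk must change. The circles $\alpha_1$ and $\gamma''_1$ meet once, and both are disjoint from $\partial f_1$. Hence this circle cuts $\partial H_1$ into a once-punctured torus $S\supset\alpha_1\cup\gamma''_1$ and a once-punctured torus $S'\supset\partial f_1$. Compressing $S'$ along the disk $f_1\subset H_1$ produces a disk $E_1\subset H_1$ with $\partial E_1=\partial N(\alpha_1\cup\gamma''_1)$. (If $\partial f_1$ happened to be parallel to $\partial S'$, $f_1$ itself would serve.) Since $|\gamma''_1\cap r|=1$ and $\alpha_1\cap r=\emptyset$, we get $|\partial E_1\cap r|=2$. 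The disk $E_1$ is nontrivial because its boundary splits $\partial H_1$ into two once-punctured tori. Your Step 3 argument then gives minimality, since a Seifert circle cannot miss a nontrivial separating disk.

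This is the paper's proof: it never needs a disk bounded by $\gamma''_1$, and none exists. The choice of the band circle is also not an obstacle, since $\alpha_1$ already has the properties you wanted.
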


\begin{proof}
As the circles $\alpha_i$ and $\gamma''_i=\gamma_i\cup\gamma'_i$ lie in $\partial H_i=T\cup B_i$, intersect minimally in one point, and are disjoint from the circle $\partial f_i\subset\partial H_i$, it follows that the circle 
\[
\partial N(\alpha_i\cup\gamma''_i)\subset\partial H_i
\]
separates $\partial H_i$ into two once punctured tori $S,S'$ with  $\alpha_i\cup\gamma''_i\subset S$ and $\partial f_i\subset S'$. Therefore the disk $f_i$ compresses $S'$ in $H$ into a disk
$E_i\subset H_i$ with $\partial E_i=\partial N(\alpha_i\cup\gamma''_i)$
which separates $\alpha_i$ and $f_i$.

\medskip

Since  $\gamma''_i\cap r=\gamma_i\cap r$ consists of one point and $\alpha_i\cap r=\emptyset$, the circle $\partial E_i$ intersects the Seifert slope $r$ minimally in two points, so $r$ splits in $H_i$.
\end{proof}

\subsection{The family of link pairs $\mc{L}$}\label{mcL}
The construction of the link pairs $(L,P)\in\mc{L}$ proceeds now as follows. 

Let $K\subset\mS^3$ be a hyperbolic Eudave-Mu\~noz knot with 
$r,T,H_i$ and $p_i,q_i\geq 2$ as in Lemma~\ref{char}(1), for $i=1,2$. By Lemma~\ref{lemG}, the Seifert slope $r\subset\partial H_i$ splits in $H_i$. By \S\ref{seif2}, each splitting disk $E\subset H_i$ of $r$ induces a link pair $(L_E,P_E)$ in $H_i$, where $L_E=K_2\sqcup K_3\subset H_i$ is a 2-component link consisting of core circles in $H_i$ separated by $E$, and $P_E$ is a pants with boundary components of slopes $r\subset\partial H_i$, $r_2=a_i/p_i$ in $\partial N(K_2)$, and $r_3=b_i/q_i$ in $\partial N(K_3)$.

Setting $K_1=K$ and $r_1=r$, it follows that the exterior $X_L\subset\mS^3$ of the 3-component link $L=K_1\sqcup K_2\sqcup K_3\subset\mS^3$ contains the pants $P=P_E\subset X_L$ with large boundary slopes $r_1,r_2,r_3$ of denominators $2,p_i,q_i$, respectively; that is, $P=X_0(2,p_i,q_i)$.

\medskip
Moreover, by \S\ref{seif2}, there is a regular neighborhood $N(P)$ of $P$ in $X_L$ such that
\[
H_i=N(P)\cup N(K_2)\cup N(K_3)
\quad\text{and}\quad
X_L=N(P)\cup_{P\times\{0,1\}} H_j
\]
Therefore, the exterior  of $P$ in $X_L$ satisfies the identities
\[
X(P)=\cl[\,X_L\setminus N(P)]=H_j
\]
and so $X(P)$ is a genus two handlebody.
The situation is represented in Fig.~\ref{oz03-2}.

As mentioned earlier, by Proposition~\ref{torus} the twice punctured torus $T\subset X_K$ used in the construction of the link pairs of the family $\mc{L}$ is unique up to isotopy and hence is not a variable in the construction.

\begin{figure}
\Fig{1}{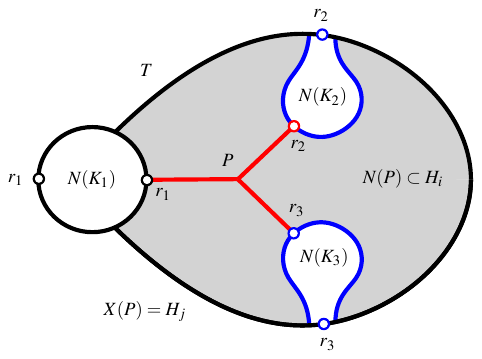}{A link pair $(L,P)\in\mc{L}$.}{oz03-2}
\end{figure}

\section{Hyperbolic 3-component links in $\mS^3$}\label{3comp}

In this section we assume that $L=K_1\sqcup K_2\sqcup K_3\subset\mS^3$ is a link with regular neighborhood $N(L)=N(K_1)\sqcup N(K_2)\sqcup N(K_3)\subset\mS^3$ and exterior $X_L=\mS^3\setminus\intr\,N(L)$,
and that $P=X_0(p_1,p_2,p_3)\subset X_L$ is a spanning pants with large boundary slopes, such that
\begin{enumerate}
\item[(P1)] $\partial P=\partial_1P\sqcup\partial_2P\sqcup\partial_3P$,

\item[(P2)] for each $i=1,2,3$, the circle $r_i=\partial_iP$ lies in $\partial N(K_i)$ and has slope relative to $N(K_i)$ of the form $r_i=a_i/p_i$, $p_i\geq 2$. 
\end{enumerate}
We first establish some general properties of $X_L$ and $X(P)$ and later use them to determine precisely when the link $L$ is hyperbolic.

\medskip
Let $N(P)=P\times [-1,1]\subset X_L$ with $P=P\times\{0\}$, so that $X(P)=\cl[X_L\setminus N(P)]$ is the exterior of $P$ in $X_L$. 

\subsection{Properties of $L$ and $P$}

\begin{lem}\label{lemA}
\begin{enumerate}
\item $P$ is essential in $X_L$. In particular,
$X_L$ is irreducible and boundary irreducible, so the link $L$ is unsplittable.

\item
$\partial X(P)$ compresses in $X(P)$; moreover, $N(L)\cup N(P)$ can be reembedded in $\mS^3$ so that $X(P)$ is a handlebody and $P\subset X_L$ continues to be a surface of type $X_0(p_1,p_2,p_3)$.

\item For $i\neq j$, the slopes $r_i,r_j$ are not coannular in $X_L$.

\item If $X(P)$ is not a genus two handlebody then $X_L$ is toroidal.

\item
If $r_i\sqcup r_j$ are coprimitive in $X(P)$ away from $r_k$ then there is a spanning annulus $A\subset X_L$ with $\partial_1 A\subset\partial N(K_i)$, $\partial_2 A\subset\partial N(K_j)$, and
$\Delta(\partial_1A,r_i)=1=\Delta(\partial_2A,r_j)$.

\item If the slope $r_i\subset\partial X(P)$ has a companion annulus $A\subset X(P)$ then $A$ is essential in $X_L$, in which case if $X(P)$ is a handlebody then $K_i\subset\mS^3$ is a trivial knot. 
\end{enumerate}
\end{lem}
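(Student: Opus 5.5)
I will prove the six items essentially in order, each time by analysing a disk, annulus or torus against $P$ and $X(P)$ and using that the slopes $r_i$ have $p_i\geq 2$.

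\emph{Item (1).} Suppose $D$ is a compression disk for $P$. Then $\partial D$ separates $P$ (every circle in a pants separates it), so it separates one boundary circle $r_i$ from the other two, or two from one. Compressing $P$ along $D$ yields a disk $D'$ with $\partial D'=r_i$ for some $i$. Since $r_i$ has slope $a_i/p_i$ with $p_i\geq 2$, the class $p_i[\mu_i]+a_i[\lambda_i]$ is nonzero in $H_1(\mS^3\setminus K_i)\cong\mZ$ (generated by the meridian $\mu_i$), so $r_i$ cannot bound a disk in the knot exterior, a contradiction. The same argument rules out boundary parallelism, since $P$ has three boundary components on distinct tori.

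For irreducibility of $X_L$, a reducing sphere would split $L$. The splitting sphere can be made disjoint from the incompressible $P$, but $P$ is connected and meets every component of $L$, which is impossible. For boundary irreducibility, a compressing disk for $\partial N(K_i)$ makes $K_i$ a split unknot or forces a meridian disk. It can be isotoped off $P$ by incompressibility and an outermost-arc argument, since $P$ meets $\partial N(K_i)$ in the non-meridional slope $r_i$. It then meets $\partial N(K_i)$ in a curve of slope $0/1$ disjoint from $r_i$, so $\Delta=0$ forces $a_i/p_i=0$, contradicting $p_i\geq 2$.

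\emph{Item (2).} $\partial X(P)$ is a genus two surface in $\mS^3$ after capping along the $N(K_i)$. More precisely, $N(L)\cup N(P)$ is a genus two handlebody $W$ in $\mS^3$, and $X(P)=\mS^3\setminus\intr W$. By Fox re-embedding (or simply because $\partial W$ compresses on one side, namely the complement of a handlebody in $\mS^3$), I will argue compressibility as follows: a genus two surface in $\mS^3$ compresses on at least one side, and if $X(P)$ were boundary-incompressible then $W$ would have to compress, which is fine, so I must instead invoke Fox's theorem. Fox's theorem lets us re-embed $W$ so that the complement is a handlebody, and the slopes, being intrinsic to $W$, are preserved. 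I also need the first claim, that $\partial X(P)$ compresses for the original embedding. For this I will use that $X_L(r_1,r_2,r_3)$ contains the closed sphere $\wh{P}$, so $\mS^3$ surgered along the three slopes is reducible. A standard argument then shows that $\partial X(P)$ compresses: otherwise $\wh P$ would be essential and $X(P)$ would embed with incompressible boundary, contradicting an $H_1$ count.

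\emph{Items (3)--(6).} For (3), an annulus joining $r_i$ to $r_j$ together with $P$ gives a disk or annulus which, after capping, yields a disk bounded by $r_k$, contradicting (1). For (4), compress $\partial X(P)$ by (2); if the result is not a handlebody, the remaining boundary gives an essential torus, checked via incompressibility of $P$. For (5), take the disk $D$ meeting $r_i$ and $r_j$ once each. Band it with $N(P)$ across the two intersection points to obtain an annulus meeting each $r_i,r_j$ once. For (6), the companion annulus $A$ is essential by the uniqueness of companion annuli recalled in \S\ref{comp1}. If $X(P)$ is a handlebody, the companion solid torus together with $N(K_i)$ shows that $K_i$ is a core of a solid torus in a Heegaard splitting, hence trivial.

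The main obstacle is (2): establishing compressibility for the original embedding rather than only after re-embedding. Here I would try the reducible-surgery sphere argument first.
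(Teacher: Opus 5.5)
Your arguments for (1), (5) and the second half of (6) broadly follow the paper; your homological reason why $r_i$ cannot bound a disk in $X_{K_i}$ is a fine substitute for the lens-space-summand argument. Item (2), however, has a genuine gap, at exactly the point you flag. Your assertion that $W=N(L)\cup N(P)$ is a genus two handlebody is false. $N(P)\cup N(K_1)\cup N(K_2)$ is a genus two handlebody, because $r_1,r_2$ are basic in $N(P)=P\times[-1,1]$. But by \S\ref{seif1}, $r_3$ is a split Seifert circle of type $(p_1,p_2)$ in it, hence neither primitive nor a power. So by \S\ref{comp2}, attaching $N(K_3)$ yields an irreducible and \emph{boundary irreducible} $W$. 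This is the missing idea: every closed surface of positive genus in $\mS^3$ compresses on some side, so $\partial X(P)=\partial W$ must compress in $X(P)$. Your fallback (the reducible filling $X_L(r_1,r_2,r_3)$ plus an unspecified $H_1$ count) does not produce a compressing disk. As written, the first claim of (2) is therefore unproved, and (4), which starts from a compression of $\partial X(P)$, inherits the gap. Also, Fox re-embedding does not preserve the slopes $a_i/p_i$, since longitudes change. Only the $p_i$ are intrinsic to $W$, and that is all that is needed.

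Your argument for (3) does not work. Gluing an annulus $A$ with $\partial A=r_i\sqcup r_j$ to $P$ gives a once-punctured torus or once-punctured Klein bottle bounded by $r_k$, not a disk, so there is no contradiction with (1). Moreover, you have not arranged $A\cap P=\emptyset$, and without it the union is not even embedded. The orientable case could be dismissed by homology, since $r_k$ would then be the longitude of $K_k$. But once-punctured Klein bottles with large boundary slope do occur in knot exteriors (see Theorem~\ref{thm5}), so that case gives no contradiction. The paper proceeds as follows. It slides the circles of $A\cap P$ off $P$, and excludes $A\subset N(P)$ because $r_i,r_j$ are basic there, so $A\subset X(P)$. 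Then $N(K_i)\cup N(A)\cup N(K_j)=\mD^2(p_i,p_j)$ is a nontrivial torus knot exterior, and $r_i$ is integral on the complementary solid torus $N(K_0)$, so $N(K_0)(r_i)=\mS^3$. In it, $\wh P$ is a disk bounded by $r_k$ in $N(K_0)(r_i)\setminus\intr\,N(K_k)$, which makes $L_{p_k}$ a summand of $\mS^3$.

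Two smaller repairs. In (1), a compressing disk for $\partial N(K_i)$ has longitudinal boundary, which meets $r_i$ at least $|a_i|\geq 1$ times, so it cannot be isotoped off $P$. Instead, note that such a disk produces a sphere separating $K_i$ from $L\setminus K_i$, contradicting irreducibility. In (6), essentiality of $A$ has nothing to do with uniqueness of companion annuli. $A$ is incompressible because $\partial X_L$ is. It is not boundary parallel because it separates $\partial N(K_j)\sqcup\partial N(K_k)$ from a region $V\subset X(P)$ that is not a product, since $A$ is not parallel to $N(r_i)$ in $X(P)$.
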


\begin{proof}
(1) If $D\subset X_L$ is a compression disk for $P$ then the circle $\partial D\subset P$ is parallel to some boundary component $\partial_iP$ of $P$ and so the slope $r_i\subset\partial N(K_i)$ bounds a disk in $\mS^3$, which implies that the lens space $L_{p_i}=N(K_i)(r_i)$ is a connected summand of $\mS^3$, an impossibility. Thus $P$ is incompressible, hence essential, in $X_L$.

As $\partial X(P)$ is connected, the manifold $X(P)\subset\mS^3$ is irreducible. Hence any essential 2-sphere in $X_L$, as the one obtained by compressing some torus boundary component $\partial N(K_i)$ in $X_L$, must intersect $P$ nontrivially and generate a compression disk for $P$, contradicting the argument above. Therefore $X_L$ is irreducible, so the link $L\subset\mS^3$ is unsplittable, and boundary irreducible.

\medskip
(2) In the genus two handlebody $N(P)=P\times[-1,1]$, with $P=P\times\{0\}$, each pair of slopes $r_i,r_j$ are basic circles. By \S\ref{seif1}, $N(P)\cup N(K_1)\cup N(K_2)$ is a genus two handlebody where the slope $r_3$ becomes a Seifert circle. Hence, by \S\ref{comp2} the manifold $M^3=N(P)\cup N(K_1)\cup N(K_2)\cup N(K_3)\subset\mS^3$ is irreducible and boundary irreducible. Since the surface $\partial X(P)=\partial M^3$ compresses in $\mS^3$, it follows that $\partial X(P)$ compresses in $X(P)$. 

By Fox's re-embedding theorem \cite{fox1}, the manifold $M^3$ can be reimbedded in $\mS^3$ so that its closed complement $\cl[\mS^3\setminus M^3]=X(P)$ is a handlebody; after the reimbedding the boundary slopes of $P$ may change but not their denominators, hence $P$ continues to represent a surface of type $X_0(p_1,p_2,p_3)$.

\medskip
(3) Suppose for definiteness that the slopes $r_1\sqcup r_2$ cobound an annulus $A\subset X_L$. After isotoping $A$ in $X_L$ to intersect $P$ minimally, so that $\partial A\cap\partial P=\emptyset$, their intersection $A\cap P$ consists of a possibly empty collection of circles, each of which, by (1), must be parallel in $P$ to some boundary component of $P$. 

If $A\cap P\neq\emptyset$ then some closed component $A'$ of $A\setminus P$ is an annulus whose boundary circles are parallel in $P$ or $\partial N(K_1)\sqcup\partial N(K_2)$ to distinct components of $\partial P$. Thus $\partial A'$ can be isotoped to lie on $\partial N(K_1)\sqcup\partial N(K_2)$, and $A$ along with it, so that $A'\cap P=\emptyset$.

Therefore we may assume that $A\cap P=\emptyset$, so that either $A\subset N(P)$ or $A\subset X(P)$.
By the argument in (2), the slopes $r_1\sqcup r_2$ are basic in $N(P)$ and hence not coannular in $N(P)$ by \S\ref{many}, so we must have that $A\subset X(P)$. 

Isotope $A$ in $X_L$ so that $A\cap P=\partial A\cap \partial P=r_1\sqcup r_2$, and let $N(A)=A\times [0,1]\subset X_L$ be a small product neighborhood  such that $A=A\times 0$ and $P\cap N(A)=\partial A$. The situation is represented in Fig.~\ref{oz01}.

\begin{figure}
\Fig{1}{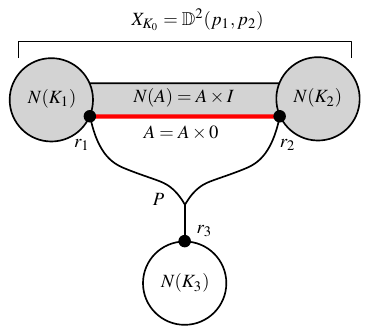}{Coannular slopes $r_1\sqcup r_2\subset\partial X_L$ in $X_L$.}{oz01}
\end{figure}

The manifold $X_0=N(K_1)\cup N(A)\cup N(K_2)$ is then a Seifert fiber manifold of the form $\mD^2(p_1,p_2)$, where by hypothesis $p_1,p_2\geq 2$, such that the slope $r_1=r_2\subset\partial X_0$ is a regular fiber. Thus $X_0$ is the exterior of a nontrivial torus knot $K_0$ in $\mS^3$, the core of the solid torus $N(K_0)=\cl[\mS^3\setminus\mD^2(p_1,p_2)]$. Since the slope $r_1\subset\partial N(K_0)=\partial X_0$ is a regular fiber in $X_0$, $r_1$ is an integral slope relative to $N(K_0)$ and so $N(K_0)(r_1)=\mS^3$. 

On the other hand, we have that $P\cup N(K_3)\subset N(K_0)$ and so, via the surface $P\subset N(K_0)$, the circle $r_3\subset\partial N(K_3)$ bounds the disk $\wh{P}$ in $N(K_0)(r_1)\setminus\intr\,N(K_3)$ and so $N(K_0)(r_1)=\mS^3$ has a lens space connected summand $N(K_3)(r_3)=L_{p_3}$, which is not the case. Therefore (3) holds.

\medskip
(4) Let $W\subseteq X(P)$ be the maximal compression body of $\partial_+W=\partial X(P)$ (see \cite{bonahon1} for definitions, notation and basic facts about maximal compression bodies). By (2), $W$ is a nontrivial compression body with 
\begin{align*}
\partial_-W=
\begin{cases}
\emptyset & \text{if }X(P)=\text{ genus two handlebody}\\
T_1\text{ or }T_1\sqcup T_2 & \text{if } X(P)\neq\text{ genus two handlebody}
\end{cases}
\end{align*}
for some incompressible torus $T_1\subset X(P)$ or tori $T_1\sqcup T_2\subset X(P)$. As $P$ is essential in $X_L$ by (1), it follows that each torus $T_i$ is incompressible in $X_L$ and not parallel into $\partial X_L$. Thus $X_L$ is toroidal if $X(P)$ is not a handlebody.

\medskip
(5) Since  $N(P)\cap X(P)=P\times\{-1\}\sqcup P\times\{1\}$, if $D\subset X(P)$ is a coprimitivity disk for $r_i\sqcup r_j$ away from $r_k$ then $c_0=D\cap (P\times\{-1\})$ and $c_1=D\cap (P\times\{1\})$ are properly embedded arcs, respectively with endpoints on $(\partial_iP\sqcup\partial_jP)\times\{-1\}$ and $(\partial_iP\sqcup\partial_jP)\times\{1\}$.

Any two arcs in $P$ with endpoints on $\partial_iP$ and $\partial_jP$ are ambient isotopic rel $\partial_k P$, and such an isotopy gives rise to an embedded disk $E\subset N(P)$ with 
$E\cap (P\times\{-\})=c_0$ and 
$E\cap (P\times\{1\})=c_1$. 
Therefore $A=D\cup E\subset X_L$ is a spanning annulus satisfying the given conditions.

\medskip
(6) As the annulus $B=N(K_i)\cap X(P)$ is a regular neighborhood of the slope $r_i\subset\partial X(P)$, we may assume that the companion annulus $A\subset X(P)$ of the slope $r_i\subset\partial N(K_i)$ satisfies $\partial A=\partial B$. 

Since $X_L$ is boundary irreducible by (1), the annulus $A$ is incompressible in $X_L$.
Also, $A$ separates $X_L$ into two regions $V,W$, where $V\subset X(P)$ has boundary the torus $\partial V=A\cup B$ and $\partial N(K_j)\sqcup\partial N(K_k)\subset \partial W$. Thus $A$ is essential in $X_L$.

If $X(P)$ is a handlebody then by \S\ref{comp1} the region $V$ is a solid torus around which $A$ runs $q\geq 2$ times. Hence $N(K_i)\cup_B V\subset\mS^3$ is a Seifert fiber manifold of the form $\mD^2(p_i,q)$, that is, the exterior of some nontrivial torus knot in $\mS^3$, and so the cores of the solid tori $N(K_i)$ and $V$ form a Hopf link in $\mS^3$. Thus $K_i$ is a trivial knot in $\mS^3$.
\end{proof}

\subsection{Hyperbolic links}\label{3comp2H}
Recall that $L=K_1\sqcup K_2\sqcup K_3\subset\mS^3$ is a 3-component link with $P\subset X_L$ a spanning pants and large boundary slopes $r_1,r_2,r_3$.
The next result gives necessary and sufficient conditions for such a link to be hyperbolic.

\begin{lem}\label{lemB}
The link $L\subset\mS^3$ is hyperbolic iff the following conditions hold:
\begin{enumerate}
\item[(H1)] $X(P)$ is a genus two handlebody,

\item[(H2)] no slope $r_i$ is a power in $X(P)$,

\item[(H3)] no two of the slopes $r_1,r_2,r_3$ are coprimitive in $X(P)$ away from the third one.
\end{enumerate}
\end{lem}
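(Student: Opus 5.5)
The forward direction follows from Lemma~\ref{lemA}. If $L$ is hyperbolic then $X_L$ is atoroidal, so (H1) holds by Lemma~\ref{lemA}(4). If some $r_i$ were a power circle in the handlebody $X(P)$, then by \S\ref{comp1} it would have a companion annulus, and Lemma~\ref{lemA}(6) shows this annulus is essential in $X_L$. That contradicts anannularity, so (H2) holds. If $r_i,r_j$ were coprimitive away from $r_k$, Lemma~\ref{lemA}(5) gives a spanning annulus $A$ between $\partial N(K_i)$ and $\partial N(K_j)$. Since $X_L$ is boundary irreducible (Lemma~\ref{lemA}(1)), $A$ is incompressible. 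Its boundary components lie on distinct tori, so it is not boundary parallel. Hence $A$ is essential, contradicting anannularity, and (H3) holds.

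For the converse, assume (H1)--(H3). By Lemma~\ref{lemA}(1), $X_L$ is irreducible and boundary irreducible. By Thurston it therefore suffices to show that $X_L$ contains no essential annulus or torus. Let $F$ be an essential torus or annulus in $X_L$, isotoped to meet $P$ minimally. Circle components of $F\cap P$ that are trivial in $P$ are removed by incompressibility and irreducibility. Circles parallel to $\partial_iP$ are pushed off as in the proof of Lemma~\ref{lemA}(3). For annuli, we also arrange that $\partial F$ meets $\partial P$ minimally; if the boundary slope of $F$ equals some $r_i$, we make $\partial F$ disjoint from $P$. Arc components of $F\cap P$ are essential arcs in both surfaces, since $P$ is boundary incompressible once its slopes are large and nonparallel.

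First suppose $F\cap P=\emptyset$. Then $F$ lies in $X(P)$ or in $N(P)\cong P\times I$. In the product, every incompressible annulus or torus is vertical or boundary parallel, so it is inessential in $X_L$. In the genus two handlebody $X(P)$, $F$ is not a torus. If $F$ is an annulus, it is incompressible in $X(P)$ with boundary on the annuli $N(r_i)\subset\partial X(P)$; the two boundary components lie in the same $N(r_i)$ or in different ones.
\begin{itemize}
\item If they lie in the same $N(r_i)$, then $F$ is a companion annulus of $r_i$, so $r_i$ is a power circle by \S\ref{comp1}, contradicting (H2).
\item If they lie in different ones, then $r_i,r_j$ are coannular in $X_L$, contradicting Lemma~\ref{lemA}(3).
\end{itemize}
Boundaries on $\partial X(P)\setminus N(r_i)$ belong to $P\times\{\pm1\}$ and are excluded by minimality.

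The remaining case, $F\cap P\neq\emptyset$ consisting of essential arcs, is the main obstacle. Each component of $F\cap X(P)$ is then a disk whose boundary alternates between arcs in $P\times\{\pm1\}$ and spanning arcs in the annuli $N(r_i)$. The pieces $F\cap N(P)$ are product rectangles $c\times I$.

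I would take an outermost or minimal-length disk component $D$ of $F\cap X(P)$. The aim is to show that $D$, after adjustment by the rectangles in $N(P)$, meets the circles $r_i$ in one point each on at most two of them. Such a disk would be a coprimitivity disk, contradicting (H3), or a disk meeting a single $r_i$ once, forcing $r_i$ primitive. Since $F$ is a torus or annulus, Euler characteristic counting makes every piece $D$ a disk whose boundary alternates around $F$. For a square (two arcs in $P\times\{\pm1\}$ and two crossings of $r$-annuli), $D$ meets two slopes once each. If the two slopes are $r_i$ and $r_j$ with $i\neq j$, $D$ avoids $r_k$ and contradicts (H3). If both crossings are of the same slope $r_i$, the union with adjacent rectangles produces a Möbius band or annulus, making $r_i$ a power or yielding coannularity, which is excluded by (H2) or Lemma~\ref{lemA}(3). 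I would try to force squares by an index/Euler-characteristic argument on the tiling of $F$, using that the total index is $0$, so that all pieces have index $0$. If needed, I would fall back on the compression body/JSJ structure: with (H1), any essential torus would bound a region in $X_L$ whose Seifert structure restricted to $X(P)$ yields exactly these annuli.
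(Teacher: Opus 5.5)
Your forward direction is correct and is the paper's own argument (Lemma~\ref{lemA}(4),(6),(5)); your check that the spanning annulus from (5) is essential is a welcome extra detail. The converse, however, has two genuine gaps.

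\textbf{First gap: the circle cases.} The step ``circles parallel to $\partial_iP$ are pushed off as in Lemma~\ref{lemA}(3)'' is not an isotopy available in general. It is exactly where (H2) has to be used. For example, if $r_1$ were a power in $X(P)$ with companion solid torus $V$, the boundary of a regular neighborhood of $N(K_1)\cup V$ would be an essential torus meeting $P$ in one circle parallel to $\partial_1P$. No isotopy removes that circle, since otherwise the torus would lie in a handlebody and compress. So your torus case is effectively missing. What is needed is the paper's argument, which runs as follows.
\begin{enumerate}
\item With $T$ in minimal position, each component of $T\cap X(P)$ is an incompressible annulus whose boundary circles are parallel in $P$ to some $r_i$.
\item By Lemma~\ref{lemA}(3), all these circles are parallel to a single slope, say $r_1$.
\item By (H2) and \S\ref{comp1}, such an annulus is not a companion annulus, so it is parallel into $\partial X(P)$.
\item Connectedness of $T$ then leaves a single such piece, so $T$ is parallel to $\partial N(K_1)$.
\end{enumerate}
The same issue affects an annulus $F$ meeting $P$ only in circles. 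Let $A'$ be the piece of $F\cap X(P)$ containing $\partial_1F$. Its other boundary circle is parallel in $P$ to $r_1$ by Lemma~\ref{lemA}(3). Sliding that circle along $P$ turns $A'$ into an essential annulus in $X(P)$ with boundary slope $r_1$, so $r_1$ is a power, contradicting (H2).

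\textbf{Second gap: the arc case.} This case is easier than you suggest, but you mishandle one subcase. Arcs occur only when $F$ is an annulus, and essential arcs in an annulus are spanning arcs. So every component $D$ of $F\cap X(P)$ is automatically a rectangle crossing $N(K_i)\cap X(P)$ once and $N(K_j)\cap X(P)$ once, where $\partial_1F\subset\partial N(K_i)$ and $\partial_2F\subset\partial N(K_j)$. No index count is needed, and for $i\neq j$ your appeal to (H3) is correct.

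For $i=j$, however, gluing $D$ to the adjacent rectangles of $F\cap N(P)$ just rebuilds the orientable annulus $F$. It yields neither a Moebius band nor an annulus exhibiting $r_i$ as a power. The right observation is that $D$ is an essential disk disjoint from $r_j\sqcup r_k$.
\begin{itemize}
\item If $D$ is nonseparating in $X(P)$, then \S\ref{many} makes $r_j,r_k$ coannular, contradicting Lemma~\ref{lemA}(3). This is the case the paper's citation of \S\ref{many} covers.
\item $D$ can also separate $X(P)$; it does so whenever $F\cap P$ is a single arc. Then $D$ separates $r_j$ from $r_k$, so both are primitive by \S\ref{many} and (H2). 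Since $r_i$ meets $\partial D$ minimally in two points, the arc of $r_i$ on the $r_j$ side is an essential arc of that once-punctured torus disjoint from $r_j$. Hence a meridian disk of that solid torus meeting $r_j$ once also meets $r_i$ once and misses $r_k$, contradicting (H3).
\end{itemize}
Once these two points are supplied, your fallback to compression-body or JSJ arguments is unnecessary.
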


\begin{proof}
By Lemma~\ref{lemA}(1) the manifold $X_L$ is irreducible and boundary irreducible, hence by \cite{thurs2} the link $L\subset\mS^3$ is hyperbolic iff $X_L$ atoroidal and anannular.
By \S\ref{comp1}, if $X(P)$ is a handlebody then a slope $r_i\subset\partial X(P)$ is a power circle in $X(P)$ iff it has a companion annulus in $X(P)$. 
Therefore,  if any of the conditions (H1)--(H3) fails then, by Lemma~\ref{lemA}(4)(5)(6),  $X_L$ is either toroidal or annular and hence $L$ is not a hyperbolic link. 

For the converse, assume that the conditions (H1)--(H3) hold.  

\medskip
Suppose first that there is an incompressible torus $T\subset X_K$. Since $N(P)$ and $X(P)$ are handlebodies, $T$ must intersect $P$ nontrivially and so we may assume that $P$ and $T$ intersect minimally, in which case each component of $T\cap X(P)$ is an incompressible annulus in $X(P)$.

If $A_T\subset T\cap X(P)$ is any annulus component then 
each boundary component of $A_T$ is parallel in $\partial X(P)$ to one of the slopes $r_1,r_2,r_3\subset\partial X(P)$. By Lemma~\ref{lemA}(3) we may therefore assume that the boundary components of $A_T$, and hence of all annular components of $T\cap X(P)$, are parallel in $\partial X(P)$ to, say, $r_1$.

Necessarily, $T\cap X(P)$ consists of the single component $A_T$. If $A_T$ is a companion annulus for $r_1$ in $X(P)$ then $r_1$ is a power in $X(P)$ by \S\ref{comp1}, contradicting (H2). Therefore $A_T$ is parallel to $\partial X(P)$, which implies that $T$ is parallel to $\partial N(K_1)$ in $X_L$ and hence that $T$ is not essential. Therefore $X_L$ does not contain any essential torus.

\medskip
Suppose now that $A\subset X_L$ is an essential annulus. 
We may assume that $A$ and $P$ intersect minimally, with $\partial A\subset \partial N(K_1)$ or
$\partial_1A\subset \partial N(K_1)$, $\partial_2A\subset \partial N(K_2)$. 

If $A\cap P=\emptyset$ then either the circles $\partial A\subset X_L$ have slopes $r_i\sqcup r_j$, contradicting Lemma~\ref{lemA}(3), or
$A$ is a companion annulus in $X(P)$ for some slope $r_i\subset\partial X(P)$ and so $r_i$ is a power in $X(P)$ by \S\ref{comp1}, contradicting (H2).

Therefore $A\cap P\neq\emptyset$ and so the graph $G_A=A\cap P\subset A$ is a nonempty collection of spanning arcs or nontrivial circles in $\intr\,A$.
By Lemma~\ref{lemA}(1),(6) the surfaces $P$ and $A$ are incompressible and boundary incompressible in $X_L$ and so each component of $G_A$ is nontrivial in $P$.

Suppose that $G_A$ is a collection of spanning arcs in $A$, so that each component of $A\cap X(P)$ is a nontrivial disk in $X(P)$. Let $D\subset X(P)$ be one such disk component.

If $\partial A\subset\partial N(K_1)$ then $D$ is disjoint from $r_2,r_3\subset\partial X(P)$ and hence by \S\ref{many} the slopes  $r_2$ and $r_3$ are coannular in $X(P)$, contradicting Lemma~\ref{lemA}(3).
If $\partial_1A\subset \partial N(K_1)$ and $\partial_2A\subset \partial N(K_2)$ then in $X(P)$ the disk $D$ is disjoint from $r_3$ and intersects each circle $r_1,r_2$ in one point; that is, $r_1$ and $r_2$ are coprimitive in $X(P)$ away from $r_3$, contradicting (H3). 

Suppose now that $G_A$ consists of circles; thus $\partial A\cap\partial P=\emptyset$ and each component of $G_A$ is nontrivial in $A$ and $P$. Let $A'\subset A\cap X(P)$ be the annular component with $\partial_1A'=\partial_1A\subset\partial N(K_1)$ a circle in $\partial X(P)$ of slope $r_1$. 

By Lemma~\ref{lemA}(3), the circle $\partial_2A'\subset P$ must be parallel in $P$ to $r_1$ and hence, by sliding $\partial_2A'$ along $P$, $A'$ can be isotoped to be properly embedded in $X_L$ and disjoint from $P$, hence to lie in $X(P)$. By minimality of $A\cap P$, the annulus $A'$ must then be essential in $X(P)$, which by (H1) and \S\ref{comp1} implies that $r_1$ is a power in $X(P)$, contradicting (H2).

Therefore $X_L$ contains no essential torus or annulus and so the lemma holds.
\end{proof}

Sufficient conditions for the link $L\subset\mS^3$ to be hyperbolic and completely characterize it are given in the next result.

\begin{lem}\label{lemD}
If $X(P)$ is a handlebody and some slope $r_i\subset\partial X(P)$ is neither primitive nor a power in $X(P)$ then the link $L\subset\mS^3$ is hyperbolic and $(L,P)\subset\mc{L}$; specifically, $K_i$ is a hyperbolic Eudave-Mu\~noz knot and $p_i=2$.
\end{lem}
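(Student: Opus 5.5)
Say $i=1$. The plan is to show that $K_1$ has a twice punctured torus in its exterior splitting it into two genus two handlebodies, and then invoke Lemma~\ref{char}(3).

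The natural candidate comes from the decomposition $X_L=N(P)\cup X(P)$. Let $H'=N(P)\cup N(K_2)\cup N(K_3)$. Since $r_2,r_3$ are basic in $N(P)=P\times[-1,1]$, \S\ref{seif1} and \S\ref{comp2} show that $H'$ is a genus two handlebody. In $H'$ the circle $r_1$ is a split Seifert circle of type $(p_2,p_3)$, with $H'(r_1)=\mD^2(p_2,p_3)$. Then
\[
X_{K_1}=H'\cup X(P),
\]
and the two pieces meet along the twice punctured torus $T=\partial X(P)\setminus N(r_1)$, whose boundary circles have slope $r_1$ on $\partial N(K_1)$. Both closed components of $X_{K_1}\setminus T$ are genus two handlebodies, because $X(P)$ is one by hypothesis.

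It remains to show that $T$ is incompressible in $X_{K_1}$. First consider the $X(P)$ side. A compression disk $D$ for $T$ in $X(P)$ is a disk in the handlebody $X(P)$ disjoint from $r_1$, essential in $\partial X(P)\setminus r_1$. If $\partial D$ is nonseparating, then $\partial X(P)\setminus r_1$ compresses along a nonseparating disk, so by \S\ref{pripo} the slope $r_1$ is primitive or a power in $X(P)$, contradicting the hypothesis. If $\partial D$ is separating and essential in $T$, then it cuts off a once punctured torus, and compressing that torus yields a nonseparating disk disjoint from $r_1$; this gives the same contradiction.

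Next consider the $H'$ side. If $T$ compressed in $H'$, the same argument would make $r_1$ primitive or a power in $H'$. That is impossible, since $H'(r_1)=\mD^2(p_2,p_3)$ with $p_2,p_3\geq 2$ is neither a solid torus nor reducible.

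Now Lemma~\ref{char}(3) gives that $K_1$ is a hyperbolic Eudave-Mu\~noz knot with $p_1=2$, and that $T$ is the twice punctured torus of Lemma~\ref{char}(1) (it is unique by Proposition~\ref{torus}). The handlebodies $T^\pm$ are $H'$ and $X(P)$. The circle $r_1$ splits in $H'$, and Lemma~\ref{lemG} is consistent with this. Inside $H'$, the knots $K_2,K_3$ are the cores of the companion solid tori of the separated power circles determined by a splitting disk $E$, namely the disk $c\times[-1,1]$, and $P=P_E$. This is exactly the construction of \S\ref{seif2} and \S\ref{mcL}, so $(L,P)\in\mc{L}$.

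For hyperbolicity I would check (H1)--(H3) of Lemma~\ref{lemB}. (H1) is given. For (H2): if $r_j$ with $j\neq1$ were a power in $X(P)$, it would be a regular fiber of $X(P)(r_1)=\mD^2(*,*)$ by \S\ref{seif1}; I would derive a contradiction from this via Lemma~\ref{lemA}(6), which makes $K_j$ trivial with an essential annulus. For (H3): a coprimitivity disk avoiding $r_1$ contradicts $r_1$ being neither primitive nor a power, by \S\ref{coprim}. A coprimitivity disk avoiding $r_2$ gives, by Lemma~\ref{lemA}(5), a spanning annulus $A$ between $K_1$ and $K_3$ with $\Delta=1$. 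I would exclude this because $K_1$ would then be a cable or torus-type knot contradicting hyperbolicity. Alternatively, hyperbolicity of $L$ may follow more directly, since membership in $\mc{L}$ is what the paper proves to characterize hyperbolic pairs.

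I expect the main obstacle to be this last hyperbolicity step, namely ruling out (H2) and (H3) failures involving $r_2,r_3$. The incompressibility of $T$ and the appeal to Lemma~\ref{char}(3) should be routine.
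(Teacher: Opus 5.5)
Your reduction to Lemma~\ref{char}(3) and your identification $(L,P)\in\mc{L}$ match the paper's proof. You get incompressibility of $T$ on both sides because $r_1$ is neither primitive nor a power in $X(P)$, nor in $H'$ since $H'(r_1)=\mD^2(p_2,p_3)$. The gap is in the hyperbolicity step, at condition (H2) of Lemma~\ref{lemB} for $r_2,r_3$.

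Lemma~\ref{lemA}(6) gives no contradiction there. It only says that if $r_j$ is a power in $X(P)$, then $X_L$ contains an essential annulus and $K_j$ is trivial. That annulus is exactly what you must rule out, and a trivial component is allowed in $\mc{L}$ (in Theorem~\ref{thm5}, $K_3$ may be trivial).

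The missing idea is to use both sides of $\wh{T}$:
\begin{enumerate}
\item The slopes $r_2,r_3$ run $p_2,p_3\geq 2$ times around $N(K_2),N(K_3)\subset H'$. So they are power circles in $H'$, and by \S\ref{seif1} they are regular fibers of $H'(r_1)$.
\item If $r_j$ were also a power in $X(P)$, it would also be a regular fiber of $X(P)(r_1)$.
\item You now know $T$ is the \EM\ torus. So (GL-2) of \S\ref{golu1} applies: the regular fibers of $H'(r_1)$ and $X(P)(r_1)$ in $\wh{T}$ meet in exactly one point. Hence no circle is a fiber of both.
\end{enumerate}
This is precisely where the paper uses the \EM\ structure you have just established.

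Your treatment of (H3) also needs repair. If $r_1$ belongs to a coprimitive pair, then $r_1$ is primitive in $X(P)$ by \S\ref{coprim}, which contradicts the hypothesis at once. In fact any coprimitive pair forces all three slopes to be primitive or powers, so (H3) holds automatically.

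The detour through the spanning annulus of Lemma~\ref{lemA}(5) does not work as stated. With $K_1$ hyperbolic, such an annulus is perfectly consistent: it makes $K_k$ a cable of, or isotopic to, $K_1$, rather than making $K_1$ a cable. Finally, falling back on ``membership in $\mc{L}$ gives hyperbolicity'' is circular. The paper proves that pairs in $\mc{L}$ are hyperbolic by means of this very lemma.
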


\begin{proof}
Observe that that no two circles $r_i,r_j$ can be coprimitive in $X(P)$ away from $r_k$: for otherwise, in $X(P)$, both $r_i$ and $r_j$ are primitive circles while by \S\ref{comp1} $r_k$ is a primitive or power circle, contradicting the hypothesis that at least one of the circles $r_1,r_2,r_3$ is neither primitive nor a power circle in $X(P)$.

We assume for definiteness that $r_1$ is neither primitive nor a power in $X(P)$. 
Since $N(P)=P\times [-1,1]$ with $P=P\times\{0\}$, 
by \S\ref{seif1} the manifold $H=N(P)\cup N(K_2)\cup N(K_3)\subset X_{K_1}$ is a handlebody and $r_1$ is a split Seifert circle in $H$ of type $(p_2,p_3)$. 
The twice punctured torus $T_1=H\cap X(P)=\partial H\cap\partial X(P)$ then separates $X_{K_1}$ into the two handlebodies $H$ and $X(P)$ such that $r_1$ is neither primitive nor a power in $H$ and $X(P)$, so by \S\ref{pripo} $T_1$ is incompressible in $H$ and $X(P)$. It follows by Lemma~\ref{char}(2)(3) that $K_1$ is a hyperbolic Eudave-Mu\~noz knot with $r_1=a_1/2$ and $p_1=2$, and by \S\ref{mcL} that $(L,P)\in\mc{L}$.

Now, we have that $X_{K_1}(r_1)=H(r_1)\cup_{\partial} X(P)(r_1)$, where by Lemma~\ref{char} and \S\ref{golu1}(GL-2) each of the manifolds $H(r_1)$ and $X(P)(r_1)$ is a Seifert fiber manifold of the form $\mD^2(*,*)$ 
and the regular fiber circles of $H(r_1)$ and $X(P)(r_1)$ in $\wh{T}_1$ intersect minimally in one point. 
By \S\ref{seif1}, any circle in $T_1$ which is a power circle in $H$ or $X(P)$ is a regular fiber of $H(r_1)$ or $X(P)(r_1)$, respectively. As each circle $r_2,r_3\subset T_1$ is a power circle in $H$, it follows that neither $r_2$ nor $r_3$ can be be a power circle in $X(P)$. Therefore by Lemma~\ref{lemB} the link  $L\subset\mS^3$ is hyperbolic.
\end{proof}

The proof of of Theorem~\ref{thm1} is based on Lemma~\ref{lemD}, which requires determining that at least one of the boundary slopes $r_i\subset\partial X(P)$ of $P$ is neither primitive nor a power in $X(P)$. That this is the case is a consequence of the topological structure of $\mS^3$, a fact that will be established in Proposition~\ref{coprim2}.

\begin{proof}[Proof of Theorem~\ref{thm1}]
For each link pair $(L,P)\in\mc{L}$, it is established in Section~\ref{sMario} that the exterior $X(P)\subset X_L$ of $P$ is a handlebody and some boundary  slope $r_i\subset\partial N(K_i)$ of $P$ is a Seifert circle in $X(P)$. Therefore $L\subset\mS^3$ is a hyperbolic link by Lemma~\ref{lemD}.

\medskip
For the converse, suppose that $L=K_1\sqcup K_2\sqcup K_3\subset\mS^3$ is a hyperbolic link with a spanning pants $P=X_0(p_1,p_2,p_3)\subset X_L$ having large boundary slopes. 
By Lemma~\ref{lemB}, $X(P)$ is a handlebody where no slope $r_1,r_2,r_3$ is a power and no two of the slopes  are coprimitive away from the third one, hence by Proposition~\ref{coprim2}(1), at least one of the slopes $r_1,r_2,r_3$ is neither primitive nor a power in $X(P)$; therefore $(L,P)\in\mc{L}$ by Lemma~\ref{lemD}. 
As each link $L$ in a pair $(L,P)\in\mc{L}$ is hyperbolic, necessarily the pair $(L,P)$ is minimal.
\end{proof}

\subsection{Geometric realizations}\label{geo}
The proof of Theorem~\ref{thm2} requires the geometric realization in $\mS^3$ of a surface $X_0(p_1,p_2,p_3)$ satisfying conditions (1) or (2) of the theorem. 
For a nonhyperbolic link pair $(L,P)$, Theorems 1.2 and 1.3 in \cite{eudave9} give the structure of the link complement $X_L$ in detail from which it is possible to perform the required geometric realizations.

Alternatively, in this section we present geometric realizations using a genus two Heegaard decomposition of $\mS^3$ and give the proof of Theorem~\ref{thm2}.

We start by giving a geometric realization of the surface $P=X_0(p_1,p_2,p_3)$ for any integers $p_1,p_2,p_3\geq 2$ in a family of closed 3-manifolds $M^3=M^3(p_1,p_2,p_3)$ that depend on various parameters. The parameters are then adjusted to satisfy conditions (1) or (2) in Theorem~\ref{thm2} and produce $M^3=\mS^3$. In all cases the exterior $X(P)\subset\mS^3$ is chosen to be a genus two handlebody, which by Lemma~\ref{lemA}(2) may always be assumed to be the case when $M^3=\mS^3$.

\subsubsection{The manifolds $M^3=M^3(p_1,p_2,p_3)$}
In this section, for each triple of integers $p_1,p_2,p_3\geq 2$ we construct a family of closed 3-manifolds $M^3=M^3(p_1,p_2,p_3)$ each of which geometrically realizes the surface $X_0(p_1,p_2,p_3)$; that is, in each manifold $M^3$ there is a 3-component link $L=K_1\sqcup K_2\sqcup K_3\subset M^3$ whose exterior $X_L=\cl[M^3\setminus N(L)]$ contains a spanning pants $P=X_0(p_1,p_2,p_3)$ such that for each $i=1,2,3$ the boundary circle $\partial P\cap\partial N(K_i)$ runs $p_i$ times around $N(K_i)$. 

Each manifold $M^3$ is obtained by attaching two 2-handles to a genus two handlebody $H$ along disjoint circles $u\sqcup v\subset\partial H$. 
Whenever conditions (1) or (2) of Theorem~\ref{thm2} are satisfied by $p_1,p_2,p_3$ we show that the parameters involved in the construction of $M^3$ can be adjusted so that $M^3=\mS^3$.

The construction of $M^3=M^3(p_1,p_2,p_3)$ is given in 6 steps.

\medskip
{\bf (M1):}
Let $H$ be a genus two handlebody and $E\subset H$ a nontrivial separating disk, $V_x,V_y$ the solid tori closed components of $H\setminus E$, and $x,y\subset H\setminus E$  meridian disks of $V_x,V_y$, respectively, so that $x \sqcup y\subset H$ is a complete disk system with $\pi_1(H)=\grp{x,y \ | \ - }$ and
$H_1(H;\mZ)=\mZ x\oplus\mZ y$.

Let $T_x,T_y\subset\partial H$ be the closed components of $\partial H\setminus\partial E$, with $\partial x\subset T_x$ and $\partial y\subset T_y$, and $\lambda_x\subset T_x$ and $\lambda_y\subset T_y$ be circles with minimal intersections $|x\cap\lambda_x|=1=|y\cap\lambda_y|$, so that $\partial x\cup\lambda_x\subset T_x$ and $\partial y\cup\lambda_y\subset T_y$ form frames for $T_x$ and $T_y$.

\medskip
{\bf (M2):}
For integers $n\geq 1$ and $q$ with $\gcd(q,p_1)=1$ we choose any circles $r_1\cup\omega_2\subset T_x$, $r_3\subset T_y$ and $r_2\subset\partial H\setminus(r_1\sqcup r_3)$ satisfying the conditions:
\begin{itemize}
\item
$r_1=x^{p_1}$ and $\omega_2=x^q$ in $\pi_1(H)$ with $r_1\cdot\omega_2=\pm1$ in $T_x$,

\item 
$r_3=y^{n}$ in $\pi_1(H)$,

\item
$r_2\subset\partial H$ is any circle disjoint from $r_1\sqcup r_3$ which intersects $\omega_2$ transversely and $E$ minimally in two points with $r_2=x^{p_1}y^n$ in $\pi_1(H)$. Necessarily $|r_2\cap\omega_2|=1$

\item
The circle $J=\partial N(r_2\cup\omega_2)\subset\partial H$ separates  $r_2\cup\omega_2$ and $r_3$ and intersects $E$ minimally in 4 points.

Let $T_u,T_v\subset\partial H$ be the closed components of $\partial H\setminus J$, with $r_2\cup\omega_2\subset T_u$ and $r_3\subset T_v$.
\end{itemize}

\medskip
{\bf (M3):}
Observe that $J\cap T_x$ and $J\cap T_y$ each consists of two parallel arcs in $T_x$ and $T_y$.

Let $\omega_3\subset T_v$ be any circle that intersects $E$ minimally in two points, such that
\begin{itemize}
\item
$\omega_3\cap T_x$ is a nontrivial arc in the parallelism region between the arcs $J\cap T_x$; necessarily this arc intersects the disk $x$ coherently $q$ times,

\item
$\omega_3\cap T_y$ is any nontrivial arc disjoint from and not parallel to the arcs $J\cap T_y$; necessarily this arc intersects the disk $y$ coherently $m$ times for some integer $m$  with $\gcd(m,n)=1$,

\item
$\omega_3=x^q y^m$ in $\pi_1(H)$.
\end{itemize}

The circles $r_1,r_2,r_3,\omega_2,\omega_3,J\subset\partial H$ are represented in Fig.~\ref{oz66c}.

\begin{figure}
\Fig{1}{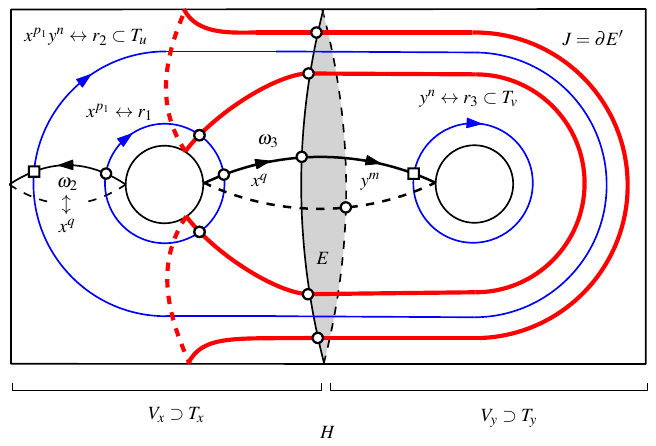}{The circles $r_1,r_2,r_3,\omega_2,\omega_3,J\subset\partial H$ in the handlebody $H$.}{oz66c}
\end{figure}

\medskip
{\bf (M4):} 
The circles $r_2\cup\omega_2\subset T_u$ and $r_3\cup\omega_3\subset T_v$ thus form frames for $T_u$ and $T_v$. For any two integers $A,B$ with $\gcd(A,p_2)=1=\gcd(B,p_3)$ we set
\[
u=Ar_2+p_2\omega_2\subset T_u
\quad\text{and}\quad
v=Br_3+p_3\omega_3\subset T_v
\]
and define the closed 3-manifold
\[
M^3=M^3(p_1,p_2,p_3)=H(u\sqcup v)
\]
Thus $\partial H\subset M^3$ is a Heegaard surface which splits $M^3$ into genus two handlebodies $H$ and $H'$.

\medskip
{\bf (M5):} 
By (M1) we have the decomposition $H=V_x\cup_E V_y$. Similarly, the circle $J\subset\partial H'=\partial H$ bounds a disk $E'\subset H'$ which separates $H'$ into two solid tori $V_u,V_v$ with meridian disks $u'\subset V_u$ and $v'\subset V_v$ such that $\partial u'=u$ and $\partial v'=v$. 
Thus  $H'=V_u\cup_{E'}V_v$ and 
$\pi_1(H')=\grp{u',v' \ | \ - }$ and
$H_1(H';\mZ)=\mZ u\oplus\mZ v$.

Since $r_2\cdot u=p_2$ in $T_u\subset\partial V_u$ and $r_3\cdot v=p_3$ in $T_v\subset\partial V_v$, the circles $r_2,r_3$ run $p_2,p_3$ times around $V_u,V_v$, respectively. Similarly, $r_1\subset\partial V_x$ runs $p_1$ times around $V_x$.

\medskip
{\bf (M6):} 
Let $V_1\subset V_x$, $V_2\subset V_u$, $V_3\subset V_v$ be companion solid tori of the power circles $r_1\subset V_x$, $r_2\subset V_u$, $r_3\subset V_v$ with core circles $K_1\subset V_1$, $K_2\subset V_2$, $K_3\subset V_3$. Notice that $K_1,K_2,K_3$ are also cores of the solid tori $V_x,V_u,V_v$, respectively, and we can set $N(K_i)=V_i$ so that each slope $r_i\subset\partial V_i$ runs $p_1$ times around $N(K_i)$.

By construction, for each $i=1,2,3$, $A_i=V_i\cap\partial H$ is an annular neighborhood of the slope $r_i\subset\partial H$.
Let $P\subset\partial H$ be a closed component of $\partial H\setminus(A_1\sqcup A_2\sqcup A_3)$. Then the pants $P$ lies in the exterior of the link $L=K_1\sqcup K_2\sqcup K_3\subset M^3$ and so the manifold $M^3$ geometrically realizes the surface $P=X_0(p_1,p_2,p_3)$ via the link pair $(L,P)$.

\subsubsection{Condition (1) in Theorem~\ref{thm2}}\label{con1}
Suppose that the integers $X,Y$ satisfy the equation 
\[
|p_1XY-p_3X+p_2Y|=1
\]
so that $\gcd(X,p_2)=1=\gcd(Y,p_3)$. We adjust the parameters in the construction of the manifold $M^3=M(p_1,p_2,p_3)$ to yield $M^3=\mS^3$ as follows.
\begin{itemize}
\item
Using the frames $\partial x\cup\lambda_x\subset T_x$ and $\partial y\cup\lambda_y\subset T_y$ defined in (M1), set
\begin{align*}
\text{in }T_x: \ \omega_2&=\lambda_x\text{ and }r_1=\partial x+p_1\lambda_x
\implies q=1 \text{ and } r_1=x^{p_1}, \ \omega_2=x\text{ in }\pi_1(H),
\\
\text{in }T_y: \ r_3&=\lambda_y\implies n=1
\text{ and } r_3=y\text{ in }\pi_1(H)
\end{align*}

\item
Since $r_3=\lambda_y$, the circle $\omega_3$ in (M4) can be constructed with $m=0$; thus $r_1=\omega_3=x$ in $\pi_1(H)$.

Observe that with the above definitions the disjoint circles $r_3\sqcup\omega_2\subset\partial H$ are basic in $H$.

\item
Since the circle $J=N(r_3\cup\omega_3)\subset\partial H$ in (M3) is disjoint from the basic circles $\omega_2\sqcup r_3\subset\partial H$ and intersects $E$ minimally in 4 points, it follows that $H=T_u\times[0,1]$ with $T_u=T_u\times\{0\}$ and $T_v\subset T_u\times\{1\}$.

\item
In (M4) set $A=X$ and $B=Y$, so that 
in the handlebody $H=T_u\times[0,1]$ each of the circles
\[
u=Xr_2+p_2\omega_2\subset T_u
\quad\text{and}\quad
v=Yr_3+p_3\omega_3\subset T_v
\]
is primitive. Thus the manifold $H(u)$ is a solid torus and so $M^3=H(u\sqcup v)=H(u)(v)$ is $\mS^3$, $\mS^1\times\mS^2$ or a lens space. As in $H_1(M^3;\mZ)$ we have
\[
u=(p_1X+p_2)x+Xy
\quad\text{and}\quad
v=p_3x+Yy
\]
the cardinality of $H_1(M^3;\mZ)$ is
\begin{align*}
|H_1(M^3;\mZ)|&=
\left|\det
\begin{bmatrix}
p_1X+p_2 & X\\
p_3 & Y
\end{bmatrix}\right|
=|p_1XY+p_2Y-p_3X|=1
\end{align*}
Therefore $M^3=\mS^3$.
\end{itemize}

\begin{rem}\label{option1b}
The conditions $q=1$ and $m=0$ imply that the knots $K_1,K_2,K_3\subset\mS^3$ satisfy the conclusions in Remark~\ref{option1}: 
$K_1$ is an $(A,B)$ torus knot that lies on a trivially embedded torus $T\subset\mS^3$ and $K_2\sqcup K_3\subset\mS^3$ is the Hopf link formed by the cores of the solid tori complementary to $T$ (see Fig.~\ref{oz90}).
\end{rem}

\subsubsection{Condition (2) in Theorem~\ref{thm2}}\label{con2}
Suppose that the integers $p_1,p_2,p_3\geq 2$ satisfy the condition $p_1\equiv\pm1 \text{ mod } p_2$, that is,
\[
p_1=qp_2+\delta\text{ for some integers } q\geq 1\text{ and }\delta=\pm1
\]
This time we adjust the parameters in the construction of the manifold $M^3=M(p_1,p_2,p_3)$ to yield $M^3=\mS^3$ as follows.
\begin{itemize}
\item
Set
\begin{align*}
\text{in }T_x: \ r_1&=p_2\partial x+p_1\lambda_x\text{ and }\omega_2=\partial x+q\lambda_x
\\
&\implies r_1\cdot\omega_2=-\delta=\pm1 \text{ and } r_1=p_1x, \ \omega_2=qx\text{ in }H_1(H;\mZ),
\\
\text{in }T_y: \ r_3&=(p_3+1)\partial y+p_3\lambda_y\implies n=p_3+1
\text{ and } r_3=p_3y\text{ in }H_1(H;\mZ)
\end{align*}

\item
Observe that the circle $\partial y+\lambda_y\subset T_y$ intersects the circle $r_3=(p_3+1)\partial y+p_3\lambda_y\subset T_y$ and the arc $r_2\cap T_y$ each minimally in one point. This implies that in (M4) the value $m=1$ can be used in the construction of $\omega_3\subset T_v$, so that $\omega_3=x^qy$ in $\pi_1(H)$.

\item
In (M4) set $A=-1$ and $B=\delta qp_3-1$. 
In $\pi_1(H)$, relative to the base point $r_2\cap\omega_2\subset T_u$, we thus have
\begin{align*}
r_2(x,y)&=x^k y^n x^{\ell}\text{ for some integers }k,\ell\text{ with }k+\ell=p_1 ,
\\
\omega_2(x,y)&=x^q,
\\
u=Ar_2+p_2\omega_2
\implies
u(x,y)&=w_{-1,p_2}(r_2,\omega_2)
\\
&=r_2^{-1}\,\omega_2^{p_2}
=x^{-\ell}y^{-n}x^{-k}\cdot x^{qp_2}
\\
&=x^{-\ell} [y^{-n}x^{qp_2-p_1}] x^{\ell}
=x^{-\ell} [y^{-n}x^{-\delta}] x^{\ell}
\end{align*}
Since $\delta=\pm1$, the circle $u\subset T_u$ is primitive in $H$ and so $M^3=\mS^3$ iff $|H_1(M^3;\mZ)|=1$ as in \S\ref{con1}.

Indeed, as in $H_1(M^3;\mZ)$ we have
\begin{align*}
r_2&=p_1x+qy
\quad\text{and}\quad
\omega_2=qx,
\quad
r_3=ny
\quad\text{and}\quad
\omega_3=qx+y
\\
\implies
u&=-r_2+p_2\omega_2=(p_2q-p_1)x-ny=-\delta x-ny
\\
v&=(\delta qp3-1)r_3+p_3\omega_3=qp_3x+[n(\delta qp_3-1)+p_3]y
\end{align*}
the cardinality of $H_1(M^3;\mZ)$ is
\begin{align*}
|H_1(M^3;\mZ)|&=
\left|\det
\begin{bmatrix}
-\delta & -n \\
qp_3 & n(\delta qp_3-1)+p_3
\end{bmatrix}\right|
=|\delta(n-p_3)|=1 \longleftarrow \ n=p_3+1
\end{align*}
Therefore $M^3=\mS^3$.
\end{itemize}

\subsubsection{Proof of Theorem~\ref{thm2}}
Suppose that $P=X_0(p_1,p_2,p_3)$ embeds in the exterior of some 3-component link $L\subset\mS^3$ in $\mS^3$ with large boundary slopes. If the link $L$ is not hyperbolic then it is shown in \cite[\S1.2]{eudave9} that conditions (1) and (2) must hold. If $L$ is hyperbolic then by Theorem~\ref{thm1} $p_j=2$ for some $j$; as $\gcd(p_1,p_2,p_3)=1$, it follows that $p_i$ is odd for some $i$ and hence that $p_i\equiv 1\text{ mod }p_j$, so condition (2) holds.
Conversely, if condition (1) or (2) in Theorem~\ref{thm2} holds for some integers $p_1,p_2,p_3\geq 2$ then by the constructions in  \S\ref{con1} or \S\ref{con2} the surface $X_0(p_1,p_2,p_3)$ is geometrically realizable in $\mS^3$.
\hfill\qed

\subsection{Once-punctured Klein bottles}\label{proof5}
In this section we combine some of the arguments in the proofs of Theorems~\ref{thm1} and \ref{thm2} to establish Theorem~\ref{thm5}.
The next result gives several properties of the knots whose exteriors contain once-punctured Klein bottles with large boundary slopes.

\begin{lem}\label{klein}
If $K\subset\mS^3$ is a knot and $F\subset X_K$ a once punctured Klein bottle with large boundary slope then 
\begin{enumerate}
\item
$F$ boundary compresses in $X_K$ into a Moebius band $B\subset X_K$; thus $K$ is a $2$-cable of some knot in $\mS^3$,

\item
if $V=N[N(K)\cup B]\subset\mS^3$ and $XV_K=\cl[V\setminus N(K)]\subset X_K$ is the exterior of $K$ in $V$ then $F$ can be isotoped in $X_K$ to lie in $XV_K$,

\item
if $K$ is a nonaztrivial knot then $V$ is a solid torus and each center of $F$ can be isotoped in $V$ to intersect a meridian disk $D\subset V$ coherently in at least one point,

\item
$X(F)\subset X_K$ is a genus two handlebody iff $K$ is a trivial or torus knot of type $(2,2n+1)$.
\end{enumerate}
\end{lem}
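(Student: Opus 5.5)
We would prove the four items in order; each builds on the one before.

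For (1), start from the fact that a once-punctured Klein bottle $F$ is nonorientable. Hence $F$ is compressible or boundary compressible in $X_K$. In fact, since the boundary slope $a/p$ has $p\geq 2$, $F$ cannot compress to a disk, because a disk would make $X_K(a/p)$ have a lens space summand. The standard argument is that a nonorientable surface in an orientable irreducible manifold with torus boundary is boundary compressible unless it is essential of a restricted type. We would instead argue directly: pick a nontrivial separating arc $c\subset F$ cutting $F$ into two Moebius bands $B_1,B_2$. Take the boundary of a regular neighborhood of $\partial X_K\cup B_1$, or more simply boundary compress $F$ along a disk obtained from the band structure, to get a Moebius band $B\subset X_K$ whose boundary has slope of denominator $2$. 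A Moebius band properly embedded in a knot exterior in $\mS^3$ forces $K$ to be a $2$-cable (or a $(2,2n+1)$ torus knot or unknot, viewed as cables of the trivial knot): $N(N(K)\cup B)$ is a Seifert fibered solid torus in which $K$ runs twice. The main point to check is the existence of the boundary compression. We expect it to follow from the large slope hypothesis, since $\wh{F}$ in $X_K(a/p)$ would otherwise give an incompressible Klein bottle in a manifold with nontrivial torsion. If this is delicate, we would invoke the classification of Moebius bands and once-punctured Klein bottles in knot exteriors.

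For (2), $V=N[N(K)\cup B]$ is a solid torus or a cable space neighborhood, and $\partial V$ is a torus. Isotope $F$ to meet $\partial V$ minimally. Each intersection curve is inessential in $F$, or is an orientation-preserving curve parallel to $\partial F$, or is a center. We then use an innermost/outermost argument. Since $F$ was obtained from $B$ by banding along an arc, $F$ lies in a neighborhood of $B\cup(\text{band})\cup N(K)$, and the band can be slid into $V$. So $F$ lies in $XV_K$.

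For (3), if $K$ is nontrivial then $V$ is a knotted or unknotted solid torus, and $\partial V$ is incompressible in its exterior. The centers of $F$ are orientation reversing, so their images in $H_1(V)=\mZ$ are nonzero. Indeed, if a center had zero class then its Moebius band neighborhood would lift trivially. Combined with the fact that $F\subset V\setminus K$ sits as a Seifert surface-like object, an isotopy in $V$ makes each center intersect $D$ coherently, by the standard braid-in-solid-torus argument.

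For (4), the argument splits into two directions. If $K$ is trivial or a $(2,2n+1)$ torus knot, we exhibit $F$ explicitly as the plumbing/banding of the standard Moebius band and check that $X(F)$ is a genus two handlebody, exactly as in the constructions of \S\ref{geo}. Conversely, if $X(F)$ is a handlebody then $\partial V$ must compress in the complement of $V$, since an incompressible $\partial V$ would survive as an incompressible torus in $X(F)$. Hence $V$ is unknotted and $K$ is a $2$-cable of the trivial knot. The main obstacle is (1), namely producing the boundary compression disk.
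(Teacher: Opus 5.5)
\textbf{Item (1).} This is the crux, and your argument for it does not work. Nonorientability alone does not produce a boundary compression into a M\"obius band. The figure-eight knot bounds once-punctured Klein bottles (its checkerboard surfaces, of slopes $\pm4$). Since a hyperbolic knot exterior contains no M\"obius band, these cannot be boundary compressed into one.

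The torsion heuristic gives no contradiction either: closed manifolds with torsion in $H_1$ can contain incompressible Klein bottles, e.g.\ $\pm4$-surgery on the figure-eight knot, with $H_1\cong\mZ/4$. Cutting $F$ along a separating arc into two M\"obius bands is only an intrinsic decomposition of $F$. It does not give a disk in $X_K$ meeting $F$ in an arc and $\partial X_K$ in an arc, which is what a boundary compression needs.

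Item (1) holds only because the slope is non-integral, through a genuine result on once-punctured Klein bottles with non-integral slope in nontrivial knot exteriors. The paper does not reprove this; it imports it from \cite[Lemma 4.2]{valdez6} and \cite[Lemma 2.4]{valdez7}, the trivial-knot case being immediate. So your fallback of citing such a result is the right move, but the reasoning you give before it should be discarded.

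Also, for nontrivial $K$ the slope of $\partial B$ is integral, not of denominator $2$. The frontier of $N(B)$ is an essential annulus, and this integrality is what makes $V$ a solid torus around which $K$ runs twice. Given (1), item (2) is immediate, as in your last sentence: undoing the boundary compression exhibits $F$ as $B$ plus a band $R$ along $\partial X_K$. No minimal-intersection argument with $\partial V$ is needed.

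\textbf{Items (3) and (4).} Both have gaps as well. In (3), the claim that centers are nonzero in $H_1(V)$ is unjustified. More seriously, a nonzero winding number does not let you isotope a knot in $V$ to meet a meridian disk coherently. Coherent intersection forces the number of intersection points to equal the absolute winding number. So any pattern whose wrapping number exceeds its winding number is a counterexample, e.g.\ the Mazur pattern, with winding number one and wrapping number three.

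The paper instead uses the explicit model $F=B\cup R$ in $V$, where the core arc of $R$ meets the meridian of $K$ coherently $p\geq1$ times, so $\partial F$ has slope $a/(2p+1)$. It combines this with \cite[Lemma 3.1]{valdez7}: every center is obtained from the core $c$ of $B$ by Dehn twists along the meridian circle $m$ of $F$. Hence every center is homologically $km+\ve c$ with $k\geq0$, $\ve=\pm1$, and can be positioned to meet $D$ coherently in $k(2p+1)+\ve$ points, or in one point when $k=0$.

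In (4), exhibiting one convenient $F$ says nothing about the given $F$. Instead, $F\simeq B\cup R$ gives $X(F)\cong X(B)\cup$ (1-handle), so $X(F)$ is a genus two handlebody iff $X(B)$ is a solid torus. For nontrivial $K$ the solid tori $N(B)$ and $V$ are isotopic, so $X(B)\cong X_c$. This is a solid torus iff $c$ is unknotted, i.e.\ iff $K$ is a $(2,2n+1)$ torus knot. Trivial $K$ is handled directly, since $X_K$ is a solid torus.

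Your incompressible-torus argument for the converse ignores compressions of $\partial V$ on the $V$ side of $X(F)$. These are excluded because such a disk would be a meridian disk of $V$ missing $K$, which has winding number $2$. The argument also does not treat trivial $K$, where $V$ need not be a solid torus.
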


\begin{proof}
If $K$ is a nontrivial knot then (1) follows by \cite[Lemma 4.2]{valdez6} and \cite[Lemma 2.4]{valdez7}, and clearly holds if $K$ is a trivial knot.
Thus $F$ is isotopic in $X_K$ to the union of $B$ and a rectangular band $R$ in $\partial X_K$, hence (2) holds.

\begin{figure}
\Fig{.8}{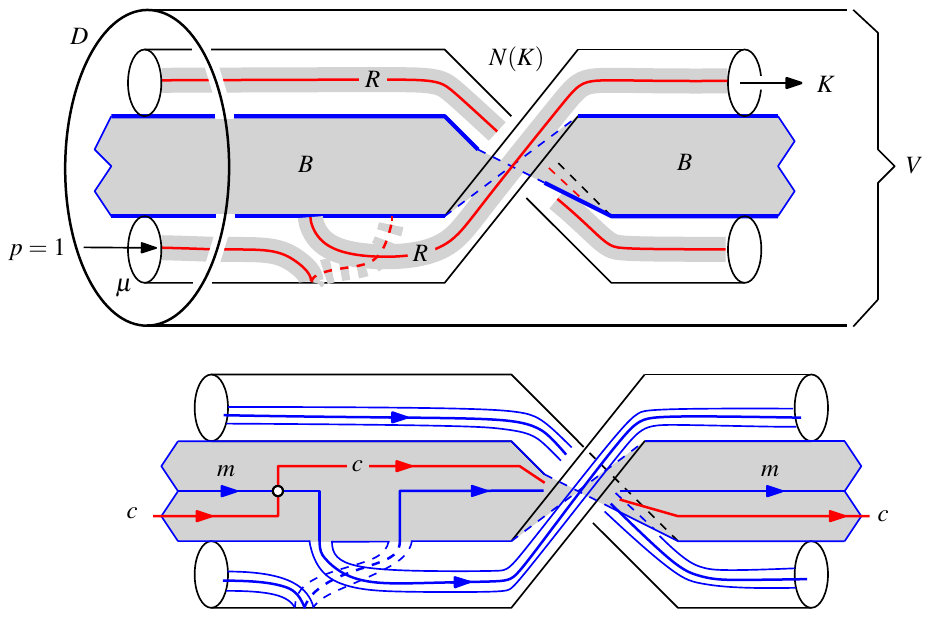}{
Center and meridian circles $c,m$ in the punctured Klein bottle $F=B\cup R\subset X_K$.
}{oz78}
\end{figure}

\medskip
For part (3) suppose that $K$ is a nontrivial knot. Then the frontier of $N(B)\subset X_K$ is an essential annulus and so the slope of $\partial B\subset\partial X_K$ is integral. This implies that $V=N[N(K)\cup B]\subset\mS^3$ is a solid torus.

We may then assume that the surfaces $D$, $B$ and $R$ are positioned in the solid torus $V$ as shown in Fig.~\ref{oz78} top, where the core arc of the band $R$ intersects the meridian slope $\mu\subset\partial N(K)$ coherently in $p\geq 0$ points. The slope of $\partial F\subset\partial N(K)$ is then of the form $a/(2p+1)$, which is large iff $p\geq 1$.

Let $c$ be the core circle of the Moebius band $B$, so $c$ is a center of the once punctured Klein bottle $F$, and $m$ be the meridian circle of $F$; thus $c$ and $m$ intersect minimally in $F$ in one point. The situation is shown in Fig.~\ref{oz78}, bottom, where $p=1$ for simplicity.

\medskip
It follows from \cite[Lemma 3.1]{valdez7} that each center circle of $F$ can be obtained by Dehn twisting the fixed center  $c$ a number of times along $m$ and hence can be represented in $F$ and $V$ as a homological sum of the form $c_{k,\ve}=km+\ve c$ for some integers $k\geq 0$ and $\ve\in\{\pm1\}$, where $c$ and $m$ are oriented as in Fig.~\ref{oz78}, bottom. Therefore, after an isotopy in $V$, the center $c_{k,\ve}$ intersects $D$ coherently in $k(2p+1)+\ve\geq 2p\geq 2$ points if $k\geq 1$, and in one point if $k=0$. Thus (3) holds.

\medskip
For part (4), observe that $K$ is a trivial or torus knot of type $(2,2n+1)$ iff the core circle $c\subset B$ is a trivial knot in $\mS^3$ (see Fig.~\ref{oz78} top). Also, as $F$ is isotopic in $X_K$ to the surface $B\cup R$, it follows that $X(F)\subset X_K$ is homemomorphic to the union of $X(B)\subset X_K$ and a 1-handle, and hence that $X(F)$ is a handlebody iff $X(B)\subset X_K$ is a solid torus.

Now, if $K$ is a trivial knot then $X_K$ is a solid torus and hence $X(B)$ can be identified with $X_K$, whence $c$ must be a core of $X_K$ and hence a trivial knot in $\mS^3$. 

If $K$ is a nontrivial knot then 
the slope of $\partial B\subset\partial X_K$ is integral and so the solid tori $N(B)\subset X_K$ and $V=N[N(K)\cup N(B)]\subset\mS^3$ are isotopic in $\mS^3$. 
As the exterior $X_{c}=\cl[\mS^3\setminus N(c)]\subset\mS^3$ of $c\subset B$ can be identified with $\cl[\mS^3\setminus N(B)]\subset\mS^3$, it follows that $X_c$  can be identified with $\cl[\mS^3\setminus V]=X(B)\subset X_K$. Therefore $X(B)$ is a solid torus iff $c\subset\mS^3$ is a trivial knot and so (4) holds.

\end{proof}

\medskip

\begin{proof}[Proof of Theorem~\ref{thm5}]
For part (1), suppose that $F$ is a once-punctured Klein bottle of large boundary slope $r_1=a_1/p_1$ in the exterior $X_{K_1}$ of a possibly trivial torus knot $K_1$ of type $(2,2n+1)$, $p_1\geq 3$ an odd integer, and let $K_2\sqcup K_3\subset F$ be a pair of disjoint centers, each a nontrivial knot in $\mS^3$. Set $L=K_1\sqcup K_2\sqcup K_3\subset\mS^3$ and $P=F\cap X_L$; thus $P$ is a spanning pants in $X_L$ of type $X_0(p_1,2,2)$ with large boundary slopes. 

Observe that the manifold $N(F)\subset X_{K_1}$ can be identified with $N(P)\cup N(K_2)\cup N(K_3)$ and hence $X(F)\subset X_{K_1}$ can be identified with $X(P)\subset X_L$; thus, by Lemma~\ref{klein}(4), $X(P)$ is a genus two handlebody where each slope $r_1,r_2,r_3$ is either primitive, a power, or neither. We consider several cases.

\medskip
{\bf Case 1:} {\it Each slope $r_1,r_2,r_3$ is primitive in $X(P)$.}
\\
By Proposition~\ref{coprim2}(2) at least 2 components of $L$ must be trivial knots, contradicting the hypothesis. Therefore this case does not occur.

\medskip
{\bf Case 2:} {\it Some slope $r_i$ is a power in $X(P)$.}
\\
By \S\ref{comp1} and Lemma~\ref{lemA}(6), $K_i$ is a trivial knot and so by hypothesis we must have $i=1$. 
Let $V\subset X(P)$ be the companion solid torus of $r_1$. Then $V\subset X_{K_1}$ and, as $N(F)=N(P)\cup N(K_2)\cup N(K_3)$, $F$ and $V$ are disjoint in the solid torus $X_{K_1}$. Therefore $F$ is properly embedded in the complementary solid torus $W=\cl[X_{K_1}\setminus V]\subset X_{K_1}$, and the boundary slope $r_1$ of $F$ in $W$ is integral. The Klein bottle $\wh{F}$ thus lies in $W(r_1)=\mS^3$, which is impossible.
Therefore this case does not occur.

\medskip
{\bf Case 3:} {\it Some slope $r_i$ is neither primitive nor a power in $X(P)$.}
\\
By Lemma~\ref{lemD} the link $L$ is hyperbolic and $K_i$ is a hyperbolic \EM\ knot, so $i\in\{2,3\}$. Therefore (1) holds.

\medskip
For part (2), let $K_1\subset\mS^3$ be a hyperbolic \EM\ knot. By Lemma~\ref{char}, there is a unique slope $r_1=a_1/2$ in $\partial X_{K_1}$ such that the manifold $X_{K_1}(r_1)$ is toroidal, and a twice punctured torus
$T\subset X_{K_1}$ such that $\wh{T}\subset X_{K_1}(r_1)$ is an incompressible torus that separates $X_{K_1}(r_1)$ into Seifert fiber manifolds of the form $\mD^2(*,*)$, at least one of which is of the form $\mD^2(2,p_3)$ for some odd integer $p_3\geq 3$. 

Moreover, $T$ separates $X_{K_1}$ into two genus two handlebodies $H,H'$, such that $r_1\subset H$ and $r_1\subset\partial H'$ are split Seifert circles, and we may assume that $H(r_1)=\mD^2(2,p_3)$.

Let $(L,P)\in\mc{L}$ be the link pair constructed in \S\ref{mcL} from the knot $K_1=K$ and the handlebody $H\subset X_{K_1}$, so that $L=K_1\sqcup K_2\sqcup K_3\subset\mS^3$ is a hyperbolic link, $P=X_0(2,2,p_3)\subset X_L$, and $X(P)=H'\subset X_L$.

The pants $P\subset X_L$ has two boundary slopes of the form $r_1=a_1/2\subset\partial N(K_1)$ and $r_2=a_2/2\subset\partial N(K_2)$, each of which bounds a Moebius band $B_1\subset N(K_1)$ and $B_2\subset N(K_2)$ such that $K_1\subset B_1$ and $K_2\subset B_2$ are core circles.
Therefore 
\[
F=P\cup B_1\cup B_2\subset X_{K_3}
\]
is a once-punctured Klein bottle with large boundary slope $r_3=a_3/p_3$.
Also, the regular neighborhood $N(F)\subset X_{K_3}$ can be identified with the handlebody $N(P)\cup N(K_1)\cup N(K_2)$, in which case the exterior $X(F)=\cl[X_{K_3}\setminus N(F)]$ of $F$ in $X_{K_3}$ is the handlebody $H'\subset X_L$.

Now, by Lemma~\ref{klein}, $F$ boundary compresses to a Moebius band $B\subset X_{K_3}$ whose core circle is a trivial knot in $\mS^3$, hence $K_3$ is a trivial or torus knot of type $(2,2n+1)$. Therefore (2) holds and the proof of Theorem~\ref{thm5} is complete.
\end{proof}

\section{Spanning planar surfaces of large boundary slopes in hyperbolic links}\label{ell4}

This section is devoted to the proof of Theorem~\ref{thm3}.

\subsection{Incompressible tori in link exteriors} The following result will help establish the hyperbolicity of some links we construct in the sequel.

\begin{lem}\label{large}
Let $L\subset\mS^3$ be an unsplittable link with $|L|\geq 2$ components.
\begin{enumerate}
\item
If a torus $T\subset X_L$ separates $L$ into two nonempty sublinks $L_1,L_2$ then $T$ is incompressible in $X_L$,
and if $|L_1|,|L_2|\geq 2$ then $T$ is essential in $X_L$; in particular, $X_L$ is irreducible and boundary irreducible.

\medskip
\item
If $|L|\geq 4$ then $L$ is hyperbolic iff $X_L$ is atoroidal.
\end{enumerate}
\end{lem}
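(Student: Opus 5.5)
For part (1), argue directly from unsplittability. The torus $T$ separates $\mS^3$ into two regions $M_1,M_2$ with $L_i\subset M_i$. Suppose $D\subset X_L$ is a compression disk for $T$. Compressing $T$ along $D$ yields a 2-sphere $S\subset X_L$. Because $T$ separates $L_1$ from $L_2$, the sphere $S$ still separates $L_1$ from $L_2$: $S$ lies in a regular neighborhood of $T\cup D$, and at least one of $M_1,M_2$ is changed only inside that neighborhood. Each $L_i$ is nonempty, so $L$ would be split, a contradiction; hence $T$ is incompressible.

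Next suppose $|L_1|,|L_2|\geq 2$. If $T$ were parallel to a component $\partial N(K)$ of $\partial X_L$, the product region $T\times[0,1]$ together with $N(K)$ would form one side of $T$. That side would contain only the single component $K$, contradicting $|L_i|\geq2$ for both $i$. So $T$ is essential. The "in particular" statement is just the standard fact that an unsplittable link with $|L|\geq 2$ has irreducible exterior: any 2-sphere in $X_L$ bounds a ball in $\mS^3$ whose complement contains all of $L$. Boundary irreducibility follows similarly. If some $\partial N(K)$ compressed in $X_L$, then since $|L|\geq 2$, compressing it gives a sphere separating $K$ from the other components, which contradicts unsplittability.

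For part (2), one direction is immediate from Thurston's characterization \cite{thurs2} quoted in \S\ref{pre}: by (1), $X_L$ is irreducible and boundary irreducible, so $L$ is hyperbolic iff $X_L$ is atoroidal and anannular. Hence it suffices to show that if $X_L$ is atoroidal and $|L|\geq 4$, then $X_L$ is anannular. Let $A\subset X_L$ be an essential annulus with $\partial A\subset\partial N(K)\sqcup\partial N(K')$, where possibly $K=K'$. Take the frontier of the regular neighborhood $N(\partial N(K)\cup A\cup\partial N(K'))$. Its components are tori, and one or two of them separate $K\cup K'$ from the remaining components of $L$, of which there are at least $|L|-2\geq 2$.

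The main obstacle is to verify that such a frontier torus $T$ is essential. By (1) it is incompressible, since it separates $L$ into nonempty sublinks. It remains to rule out boundary-parallelism. The side containing $K\cup K'$ has two link components when $K\neq K'$. When $K=K'$, the other side may contain a single component. In that case I would instead use the torus on the complementary side: $A$ splits the rest of $L$ into two groups, and at least one group has at least two components because there are at least 3 remaining components. A frontier torus enclosing $N(K)\cup N(A)$ together with one group then has at least two components on each side. Choosing the appropriate frontier component in each case, (1) shows it is essential, contradicting atoroidality. I expect the bookkeeping in the case $K=K'$, where $A$ divides $\mS^3\setminus N(K)$ into two parts, to be the fiddliest step.
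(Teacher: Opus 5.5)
\textbf{There is a gap in the case $K=K'$ of part (2).} Your part (1), the reduction of (2) to ``atoroidal implies anannular'', and the case $K\neq K'$ are correct and match the paper. In the case $K\neq K'$, the single frontier torus separates $K\sqcup K'$ from at least two other components, so (1) applies.

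Here is what goes wrong when $K=K'$. The annulus $A$ cuts $X_K$ into two pieces $V_1,V_2$. The two frontier tori $T_1\subset V_1$ and $T_2\subset V_2$ separate $L$ as follows:
\begin{itemize}
\item $T_1$ separates $L\cap V_1$ from $K\sqcup(L\cap V_2)$;
\item $T_2$ separates $L\cap V_2$ from $K\sqcup(L\cap V_1)$.
\end{itemize}
Your count, at least three components besides $K$, guarantees that one group, say $L\cap V_1$, has at least two components. It does not guarantee that the other group is nonempty.

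Suppose $L\cap V_2=\emptyset$. This genuinely occurs, e.g.\ when $K$ is a cable of a knot $C$, $A$ is the cabling annulus, and $L\setminus K$ lies outside $N(C)$. Then $T_1$ has only $K$ on one side, and $T_2$ has no component of $L$ on one side. So the criterion ``at least two components on each side'' from (1) applies to neither torus. Your assertion that a suitable frontier torus always has at least two components on each side is therefore false.

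\textbf{The missing ingredient is the essentiality of $A$.} Your proposal never uses it, and the argument cannot work without it: a boundary-parallel $A$ produces a boundary-parallel $T_1$. In the subcase $L\cap V_2=\emptyset$, argue directly about $T_1$, as the paper does.
\begin{itemize}
\item $T_1$ separates $K$ from $L\setminus K\neq\emptyset$, so it is incompressible by (1).
\item It is not parallel to $\partial N(K'')$ for any $K''\neq K$, since the side of $T_1$ away from $K$ contains at least three components.
\item If it were parallel to $\partial N(K)$, the product region $T^2\times[0,1]\subset X_L$ between them would contain $A$. There $A$ is an incompressible annulus with both boundary circles on $T^2\times\{0\}$, so it is boundary parallel in that product region and hence in $X_L$. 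This contradicts the essentiality of $A$.
\end{itemize}
Hence $T_1$ is essential, contradicting atoroidality. With this subcase added, your argument covers all cases.
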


\begin{proof}
Any torus $T$ in $X_L$ separates $\mS^3$ into two components $V_1$ and $V_2$. Suppose that $L_1=L\cap V_1$ and $L_2=L\cap V_2$ are nonempty links. 
If $T$ compresses along a disk $D\subset V_i\setminus L_i$ then there is a 3-ball $B^3$ closed component of $V_i\setminus D$ such that $L_i\cap B^3\neq\emptyset$, contradicting the hypothesis that the link $L$ is unsplittable. Therefore $T$ is incompressible in $X_L$, a conclusion that applies to each  torus component of $\partial X_L$; 
and if $|L_1,|L_2|\geq 2$ then $T$ is not parallel in $X_L$ to $\partial X_L$, hence $T$ is essential in $X_L$. 
Thus (1) holds.

\medskip
Suppose now that $|L|\geq 4$ and $A\subset X_L$ is an essential annulus. We consider two cases.

\begin{figure}
\Fig{1.2}{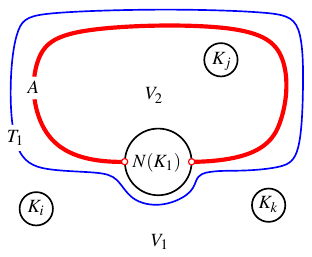}{The annulus $A\subset X_L$.}{oz77}
\end{figure}

\medskip
{\bf Case 1:} {\it $\partial A\subset\partial N(K_i)$ for some $i$.}
\\
We assume that $i=1$ for definiteness.
The annulus $A$ separates $X_{K_1}$ into two components $V_1$ and $V_2$ with torus boundaries $T_1=\partial V_1$ and $T_2=\partial V_2$. Since $|L|\geq 4$, we may assume that $|L\cap V_1|\geq 2$. Push $\partial V_1$ slightly off $\partial N(K_1)$ and into $\intr\, V_1$ to obtain an embedded torus $T_1\subset X_L$ parallel in $V_1$ to $\partial V_1$; see Fig.~\ref{oz77}.

If $L\cap V_2\neq\emptyset$ then $T_1$ separates $L$ into two nonempty sublinks, each with at least 2 components, hence $T_1$ is essential in $X_L$ by (1).

If $L\cap V_2=\emptyset$ then $\emptyset\neq L\setminus K_1\subset V_1$, so $T_1$ separates $K_1$ from $L\setminus K_1$ and so by (1) the torus $T_1$ is incompressible in $X_L$. Since,  
in $X_L$, 
$A$ is not parallel to $\partial N(K_1)$, it follows that the torus $T_1$ is not parallel to $\partial X_L$ and so $T_1$ is essential in $X_L$.

\medskip
{\bf Case 2:} {\it $\partial_1A\subset\partial N(K_i)$ and
$\partial_2A\subset\partial N(K_j)$ for some $i\neq j$.}
\\
We assume that $i=1$ and $j=2$ for definiteness. The torus $T=\partial N(N(K_1)\cup N(K_2)\cup A)\subset X_L$ separates $K_1\sqcup K_2$ from $L\setminus(K_1\sqcup K_2)$. Since $|L|\geq 4$, that $T$ is an essential torus in $X_L$ follows from (1).

\medskip
Therefore, if $X_L$ is atoroidal then it must be anannular. As the link $L$ is unsplittable, so $X_L$ is irreducible, and by (1) $\partial X_L$ is incompressible, it follows by \cite{thurs2} that $L$ is a hyperbolic link. Since the exterior of a hyperbolic link is atoroidal, (2) follows.
\end{proof}

\subsection{The $*$ extension of $L\subset\mS^3$ and $P\subset X_L$}

Let $L=K_1\sqcup\cdots\sqcup K_N\subset\mS^3$ be a link of $N\geq 3$ components and $P=X_0(p_1,\dots,p_N)$ 
a spanning planar surface in $X_L$; specifically, for each $i$, $\partial _iP=P\cap \partial N(K_i)$ consists of one component of slope $r_i=a_i/p_i\in\mQ$ for some integer $p_i\geq 0$. 

\medskip
Under these very general hypothesis, in this section we construct, for each integer $n\in\mZ$, a knot $K^*_n=K^*_n(L,P)$ disjoint from $L$ and a properly embedded spanning planar surface $P^*_n=P^*_n(L,P, K^*_n)$ in the exterior of the $(N+1)$-component link $L^*_n=L\sqcup K^*_n$. 

The next result gives a convenient sufficient condition for a link to be unsplittable.

\begin{lem}\label{unsp}
If the spanning planar surface $P\subset X_L$ is incompressible then $X_L$ is irreducible (ie, the link $L$ is unsplittable) and boundary irreducible.
\end{lem}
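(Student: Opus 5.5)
The plan is to imitate the argument of Lemma~\ref{lemA}(1), which handled the case $N=3$ with large slopes; here we only assume that $P$ is incompressible. Irreducibility and boundary irreducibility are both deduced by intersecting a putative bad sphere or disk with $P$ and extracting a compression disk for $P$.

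For irreducibility, let $S\subset X_L$ be a 2-sphere. I will isotope $S$ to meet $P$ transversely in a minimal number of circles. If $S\cap P=\emptyset$, then $S$ lies in the complement of $P$ in $X_L$. Since $P$ is spanning, it meets every component of $\partial X_L$, so $L$ lies entirely on one side of $S$ in $\mS^3$. More precisely, $N(L)\cup N(P)$ is connected, because $P$ is connected and meets every $N(K_i)$. Hence the side of $S$ in $\mS^3$ not containing $N(L)\cup N(P)$ is a 3-ball in $X_L$.

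If $S\cap P\neq\emptyset$, take a circle of $S\cap P$ that is innermost on $S$; it bounds a disk $D\subset S$ with $\intr D\cap P=\emptyset$. By incompressibility of $P$, $\partial D$ bounds a disk $D'\subset P$. Since $X_L\subset\mS^3$ and $D\cup D'$ is a sphere, we can compare with the link components. The sphere $D\cup D'$ bounds a ball in $\mS^3$, and this ball misses $L$ on at least one side. I will use the connectedness of $N(L)\cup N(P)$ again, noting that $D'\subset P$ and $\intr D\cap P=\emptyset$. After a small push off, $D\cup D'$ is disjoint from $N(L)\cup (P\setminus D')$, so one of its complementary balls contains no part of $L$. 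An isotopy across that ball then removes the intersection circle, which contradicts minimality. Thus every sphere bounds a ball and $X_L$ is irreducible, so $L$ is unsplittable.

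For boundary irreducibility, suppose $D$ is a compression disk for some torus $\partial N(K_i)$ in $X_L$. I will isotope $D$ to meet $P$ minimally. The arc/circle intersections need care: $\partial D$ meets the slope $r_i=\partial_iP$. First I remove circles of $D\cap P$ by the innermost-disk argument above, using irreducibility. If $D\cap P=\emptyset$, then $\partial D$ is disjoint from $r_i$. If $\partial D$ is parallel to $r_i$ (the essential case), the slope $r_i$ bounds a disk in $X_L\setminus P$. If $p_i\ge1$, this disk $D$ together with the annulus between $\partial D$ and $\partial_iP$ makes $\partial_iP$ bound a disk in $X_L$. Pushed into $P$'s side, this yields a compression of $P$, because $\partial_iP$ is essential in $P$: $P$ is planar with $N\ge3$ boundary components, so no boundary circle bounds a disk in $P$.

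The main obstacle is the remaining case $D\cap P\neq\emptyset$ with arcs. Here I plan a cleaner route, as in Lemma~\ref{lemA}(1). Boundary compressing $\partial N(K_i)$ produces, by surgery, an essential sphere: it is $\partial N(N(K_i)\cup D)$, which separates $K_i$ from $L\setminus K_i$ and that set is nonempty since $N\ge3$. This sphere does not bound a ball in $X_L$, contradicting irreducibility. Thus boundary irreducibility reduces entirely to irreducibility, and the arc case is avoided. The only genuine work is therefore the irreducibility argument, whose delicate point is showing that the ball bounded by $D\cup D'$ avoids $L$, using the connectedness of $N(L)\cup N(P)$.
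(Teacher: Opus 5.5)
Your proof is correct and is essentially the paper's argument. A sphere missing $P$ bounds an $L$-free ball because $N(L)\cup P$ is connected; an innermost circle of $S\cap P$ together with incompressibility handles the case $S\cap P\neq\emptyset$; and compressing $\partial N(K_i)$ produces a sphere separating $K_i$ from the other components, exactly as in Lemma~\ref{large}(1). Your explicit removal of inessential circles, using the $L$-free ball bounded by $D\cup D'$, fills in a step the paper's one-line proof leaves implicit. That step is best phrased as pushing $P$ across the ball (since $\intr D'$ may meet $S$, the ball can contain other pieces of $S$), and the abandoned case analysis in your boundary-irreducibility paragraph can simply be dropped.
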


\begin{proof}
Let $S\subset X_L$ be a 2-sphere which separates the components of $\partial X_L$, isotoped to intersect $P$ minimally. Since $P$ is a spanning surface in $X_L$, we must have $S\cap P\neq\emptyset$ and so 
a circle component of $S\cap P$ which is innermost in $S$ is a compression disk for $P$ in $X_L$, contradicting our hypothesis on $P$. Thus $X_L$ is irreducible, and $\partial X_L$ is incompressible in $X_L$ by Lemma~\ref{large}(1).
\end{proof}

\subsubsection{Construction of the knot $K^*=K^*(L,P)$ and the link $L^*$ corresponding to $n=0$.}\label{st1}
Pick any two components of $L$ and label them $K_1,K_2$. Let $\gamma_P\subset P$ be an arc with endpoints on $r_1=\partial_1P\subset\partial P$ which separates $\partial_2P$ from all the circles $\partial_iP$ for $i\neq 1,2$. Let $\gamma_1\subset\partial N(K_1)$ be an arc with interior disjoint from $\partial_1P$ and endpoints the points $\gamma_P\cap\partial_1P$, such that $\gamma_1$ is not parallel in $\partial N(K_1)$ to $\partial_1P$, as shown in  Fig~\ref{oz34}.

We define 
\[
K^*=K^*(L,P)=\gamma_P\cup\gamma_1\subset P\cup\partial N(K_1)
\]
and
\[
L^*=L\sqcup K^*=
K_1\sqcup\cdots\sqcup K_N\sqcup K^*
\]
so that $K^*$ is a cable of $K_1$ and  $L^*\subset\mS^3$ is a link of $N+1$ components.

\begin{figure}
\Fig{.8}{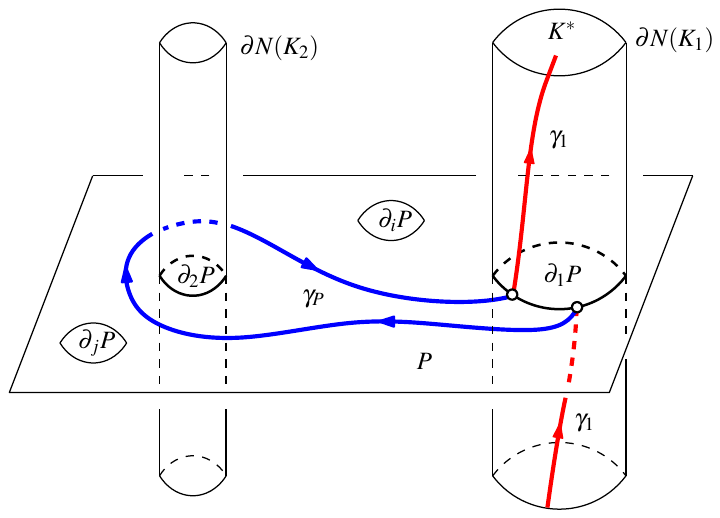}{The knot $K^*=K^*(L,P)=\gamma_P\cup\gamma_1\subset P\cup\partial N(K_1)$.}{oz34}
\end{figure}

\subsubsection{Construction of the spanning planar surface $P^*=P^*(L,P, K^*)$ corresponding to $n=0$.}\label{st2}
Let $B\subset\partial N(K_1)$ be a thin product neighborhood of the arc $\gamma_1$ such that $B\cap\partial_1P=\partial B\cap\partial_1P$ consists of two subarcs of $\partial_1P$ containing the points $\gamma_1\cap\gamma_P$ in their interior, as shown in Fig.~\ref{oz35}.

\begin{figure}
\Fig{.8}{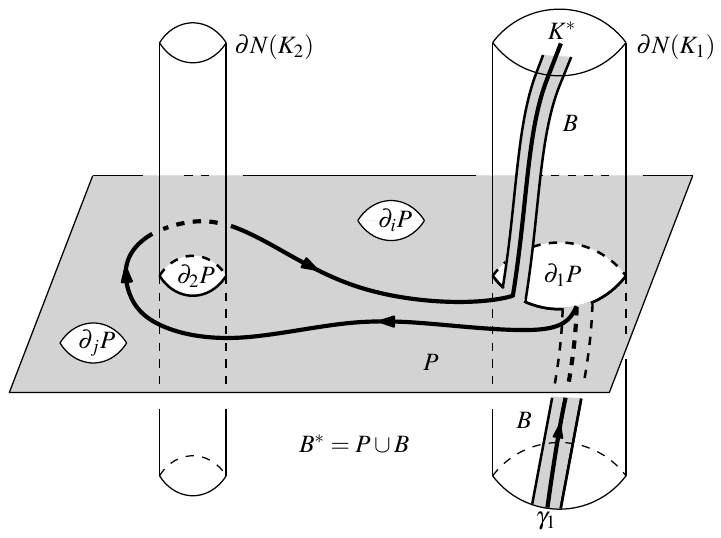}{The surface $B^*=P\cup B\subset P\cup\partial N(K_1)$.}{oz35}
\end{figure}

The surface $B^*=P\cup B$ is then an $N$-punctured projective plane in $X_L$ with $K^*$ a core circle of $B^*$, that is, $N(K^*)\subset B^*$ is a Moebius band.  The surface $B^*$ becomes properly embedded in $X_L$ after pushing its interior slightly off $\partial N(K_1)$, with $K^*\subset B^*$ isotoped along the way and still a core of $B^*$. 

\medskip
We continue to denote this new surface by $B^*$ and  define 
\[
P^*=B^*\cap X_{L^*}
\] 
so that $P^*\subset X_{L^*}$ is a spanning planar surface properly embedded in $X_{L^*}$, an $(N+1)$-punctured sphere with 
\[
\partial P^*=
\partial_1P^*\sqcup\partial_2P\sqcup\cdots
\sqcup\partial_N P \sqcup \partial_* P^*
\]
such that 
\begin{itemize}
\item
$\partial_1P^*\subset\partial N(K_1)$ is a circle of slope $r_1^*=a^*_1/p_1^*$ for some integer $p_1^*\geq 0$ and $\Delta(r_1^*,r_1)=2$, 

\item
$\partial_iP^*=\partial_iP\subset\partial N(K_i)$ is a circle of slope $r_i=a_i/p_i$ for $2\leq i\leq N$,

\item
$\partial_*P^*\subset\partial N(K^*)$, being the boundary of the Moebius band $N(K^*)\subset B^*$, is a circle of slope $r^*=a^*/2$.
\end{itemize}
Therefore $P^*=X_0(p_1^*,p_2,\dots,p_N,2)$.

\subsubsection{Construction of the knot $K^*_n=K^*_n(L,P)$ and the spanning planar surface $P^*_n=P^*_n(L,P, K^*_n)$ in the exterior of the link $L^*_n=L^*_n(L,P, K^*_n)=L\cup K^*_n$.}\label{st3}

Observe that the arc $\gamma_1\subset\partial N(K_1)$ in \S\ref{st1} intersects a circle of slope $r_1$ in $\partial N(K_1)\setminus\partial_1P$ minimally in one point.
For each integer $n\in\mZ$ define the arc $\gamma_1(n)\subset\partial N(K_1)$ as the homological sum
\[
\gamma_1(n)=\gamma_1+n\cdot r_1\subset\partial N(K_1)
\]
Replacing the arc $\gamma_1$ with $\gamma_1(n)$ in the constructions given in \S\ref{st1} and \S\ref{st2} transforms the triple $(K^*,L^*,P^*)$ into  $(K^*_n,L^*_n,P^*_n)$, so that $(K^*,L^*,P^*)$ corresponds to the triple labeled by $n=0$.

Therefore, 
\begin{itemize}
\item 
$\partial_1P^*_n=P^*_n\cap\partial N(K_1)$ a circle of slope $r_1^*(n)$ such that $\Delta(r_1^*(n),r_1)=2$,

\item
for $i\geq 2$, 
$\partial_iP^*_n=P^*_n\cap\partial N(K_i)= P^*\cap\partial N(K_i)=P\cap\partial N(K_i)$ is a circle of slope $r_i$,

\item
$\partial_*P^*_n=P^*_n\cap\partial N(K^*_n)$ a circle of slope $r^*(n)=a^*(n)/2$.
\end{itemize}

The arc $\gamma_1(n)$ is obtained by performing $n\in\mZ$ full twists to the slope $r_1^*=P^*\cap\partial N(K_1)$ along $r_1$, thus transforming the slope $r_1^*$ into the slope
\[
r^*_1(n)=r_1^*+2n\cdot r_1\subset\partial N(K_1)
\quad\text{with}\quad
[r^*_1(n)-r^*_1(m)]\cdot\mu_1=2(n-m)p_1\text{ for }m,n\in\mZ
\] 
In particular, if the slope $r_1$ is large  and some slope $r^*_1(m)$ is small, so that $p_1\geq 2$ and $p^*_1(m)=|r^*_1(m)\cdot\mu_1|\leq 1$, then 
\[
p^*_1(n)=|r^*_1(n)\cdot\mu_1|\geq 2(n-m)p_1-|r^*(m)\cdot\mu_1|\geq 3\text{ for }n\neq m
\]
and so the slope $r_1^*(n)$ is large for all integers $n\neq m$.

\medskip
Finally, after a small isotopy, each knot $K^*_n$ intersects the surface $P\subset X_L$ minimally in one point, hence we may assume that $P\cap\partial N(K^*_n)$ is a circle of slope the meridian $\mu^*_n\subset\partial N(K^*_n)$ of $K^*_n$ (see Fig.~\ref{oz35}), in which case $P\cap X_{L^*}$ is a spanning planar surface in $X_{L^*}$.

\subsection{Properties of $L^*$ and $P^*$}

In this section we present several properties of the extensions $L^*$ and $P^*$ of $L$ and $P$ given in \S\ref{st1} and \S\ref{st2}. 

Recall that the link pair $(L,P)$ is {\it minimal} if no proper subset of the slopes $\partial P\subset\partial X_L$ bounds a planar surface in $X_L$. 

\begin{lem}\label{at1}
\begin{enumerate}
\item 
$X(P^*)\approx X(P)\cup$\,(1-handle)

\item
If the link pair $(L,P)$ is minimal then $P$ is incompressible in $X_L$ and the link pair $(L^*,P^*)$ is also minimal; in particular $P^*$ is incompressible in $X_{L^*}$
\end{enumerate}
The same conclusions hold for $L^*_n$ and $P^*_n$ in place of $L$ and $P$. 
\end{lem}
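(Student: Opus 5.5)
For part (1), I would compare the two surfaces through their regular neighborhoods inside $X_L$. By construction $B^*=P\cup B$, where $B$ is a thin band in $\partial N(K_1)$ attached to $P$ along two arcs of $\partial_1P$, then pushed slightly into $X_L$. Hence $N(B^*)\subset X_L$ is $N(P)$ with a 2-handle-like slab $N(B)$ attached along two small disks in $\partial N(P)$; dually, removing $N(B^*)$ from $X_L$ amounts to removing $N(P)$ and then a small neighborhood of the band.

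Since $K^*$ is a core of the Moebius band $N(K^*)\subset B^*$, I would identify $N(P^*)\cup N(K^*)$ with $N(B^*)$. Then
\[
X(P^*)=\cl[X_{L^*}\setminus N(P^*)]=\cl[X_L\setminus N(B^*)].
\]
The band $B$ lies in a collar of $\partial N(K_1)$. Pushing it into the interior separates the collar region around $\partial_1P$ from the rest, and $\cl[X_L\setminus N(B^*)]$ is obtained from $X(P)$ by adding the thickened complement of $B$ in the collar annulus $\partial N(K_1)\setminus N(\partial_1P)$ times an interval. That complement is a rectangle $\times[0,1]$, attached to $X(P)$ along two disks, one on each side $P\times\{\pm1\}$. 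This is a 1-handle, so $X(P^*)\approx X(P)\cup(1\text{-handle})$. I would draw this local picture near $\partial N(K_1)$ carefully and check that the rectangle meets $\partial X(P)$ in exactly two disks. This bookkeeping is the main fiddly step.

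For part (2), I would first note that incompressibility of $P$ follows from minimality. Suppose $D$ were a compression disk for $P$. Then $\partial D$ separates $P$ into two planar pieces, each containing boundary components. Surgering $P$ along $D$ gives a planar surface whose boundary is a proper nonempty subset of $\partial P$, contradicting minimality. The same surgery argument shows that minimality of $(L^*,P^*)$ implies $P^*$ is incompressible, so the main task is minimality of $(L^*,P^*)$.

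For that, suppose $Q\subset X_{L^*}$ is a planar surface whose boundary consists of a proper subset of the slopes of $\partial P^*$. I would split into cases according to whether $\partial_*P^*$ is among the boundary components of $Q$.

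\textbf{Case 1: $\partial_*P^*\subset\partial Q$.} Cap $\partial_*P^*$ in $N(K^*)$ using the Moebius band structure, or equivalently tube around $K^*$ back to $\partial N(K_1)$. The idea is to reverse the band move: boundary-compress along an arc dual to $B$ in the direction opposite to the construction, converting the pair of slopes $r_1^*$ and $r^*$ back into $r_1$. This should produce a planar surface in $X_L$ bounded by a proper subset of $\partial P$, contradicting minimality of $(L,P)$. I expect this reverse move to be the main obstacle. The intended argument is a homological count: since $\Delta(r_1^*,r_1)=2$ and $r^*$ has denominator 2, the boundary of $Q$ at $K^*$ must be paired consistently with $K_1$.

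\textbf{Case 2: $\partial_*P^*\not\subset\partial Q$.} Then $Q$ meets $\partial N(K^*)$ in nothing, so filling $K^*$ trivially yields a planar surface in $X_L$ with boundary slopes among $r_1^*$ and $r_2,\dots,r_N$. If $\partial_1P^*\notin\partial Q$, this already contradicts minimality of $(L,P)$. If $\partial_1P^*\in\partial Q$, I would use homology in $\mS^3\setminus L$: the linking numbers of the boundary slopes must sum to zero. Since $r_1^*\neq r_1$, a homology relation forced by $P$ would give a contradiction. My first attempt here would be a linking-number computation with the meridian disk of $K^*$.

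The statements for $L^*_n,P^*_n$ follow verbatim, since twisting $\gamma_1$ by $n\cdot r_1$ does not change any of the local pictures above.
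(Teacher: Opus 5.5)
\textbf{Part (1) and incompressibility.} These match the paper. For (1), the paper avoids your local bookkeeping. Take a cocore arc of the pushed band and its shadow in $B\subset\partial N(K_1)$; the disk swept between them is a boundary-compression disk $D$ for $B^*$, and compressing along it returns $P$. So $X(P)$ is $X(B^*)$ cut along $D$, and $X(B^*)\approx X(P)\cup N(D)$. Note that the 1-handle is the thin slab under the pushed band, joining $P\times\{-1\}$ to $P\times\{1\}$. It is not a thickening of the complement of $B$, which is already a collar in $X(P)$.

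\textbf{The genuine gap is the minimality of $(L^*,P^*)$.}
\begin{itemize}
\item Your Case 1 mechanism fails. Capping $\partial_*P^*$ with the Moebius band in $N(K^*)$ gives a nonorientable surface, so minimality of $(L,P)$, which is about planar surfaces, says nothing about it. Also, an arbitrary $Q$ contains no band to undo.
\item In your Case 2, a meridian disk of $K^*$ is the wrong test surface.
\end{itemize}
The missing tool is the parity rule the paper uses. For properly embedded orientable surfaces $Q,F$ in general position, every arc of $Q\cap F$ has endpoints of opposite sign. Hence the algebraic intersection numbers of $\partial Q$ with $\partial F$, summed over the boundary tori, vanish.

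Take $F=Q'=P\cap X_{L^*}$, whose slopes are $r_1,\dots,r_N,\mu^*$. The only possibly nonzero terms are $r_1^*\cdot r_1=\pm2$ on $\partial N(K_1)$ and $r^*\cdot\mu^*=\pm2$ on $\partial N(K^*)$. Therefore $r^*\in\partial Q$ iff $r_1^*\in\partial Q$. If neither occurs, minimality of $(L,P)$ applies, as you say. This is the paper's Case 1 argument, and it also disposes of your subcase $r_1^*\in\partial Q$, $r^*\notin\partial Q$. On that subcase your case split is sharper than the paper's Case 2, which asserts $\Gamma\subseteq\partial P$ and so overlooks that $r_1^*\neq r_1$.

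\textbf{Remaining subcase.} What is left is $\{r_1^*,r^*\}\subseteq\partial Q$ with some $r_i$, $i\geq 2$, omitted. Here the two $\pm2$ terms can cancel, as they do for $P^*$ itself, so the parity count is silent. The paper's Case 1 assertion $\partial Q\cap\partial Q'=r^*\cap\mu^*$ tacitly assumes $r_1^*\notin\partial Q$, so it does not cover this configuration either. Your ``reverse the band move'' is aimed at exactly this situation, but as written it is a hope rather than an argument. A genuinely geometric argument is needed here, and neither your proposal nor the paper's text supplies one.
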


\begin{proof}
By construction, the knot $K^*\subset\mS^3$ is a circle in the surface $B^*=P\cup B$, which is properly embedded in $X_L$, and $P^*=B^*\cap X_{L^*}$; therefore the manifolds $X(P^*)\subset X_{L^*}$ and $X(B^*)\subset X_L$ are homeomorphic.

As $B^*$ boundary compresses in $X_L$ into the surface $P$, it follows that $X(B^*)\subset X_L$ is homeomorphic to the result of attaching a solid 1-handle to $X(P)\subset X_L$. Thus (1) holds.

\medskip
For part (2), suppose that $(L,P)$ is a minimal link pair. If $D$ is a compression disk for $P$ in $X_L$, then compressing $P$ along $D$ produces two planar surfaces $P_1,P_2$ properly embedded in $X_L$, with $\partial P_1\sqcup\partial P_2=\partial P$ and $|\partial P_1|,|\partial P_2|\geq 1$, contradicting the minimality of the pair $(L,P)$. Therefore $P$ is incompressible in  $X_L$.

\medskip
Suppose now that the link pair $(L^*,P^*)$ is not minimal and some proper subset of slopes $\Gamma\subset\partial P^*$ bounds a planar surface $Q\subset X_{L^*}$. We consider two cases.

\medskip
{\bf Case 1:} $r^*=\partial_*P^*\subset\Gamma$.
\\
Recall that $Q'=P\cap X_{L^*}$ is a spanning planar surface in $X_{L^*}$ with boundary slopes $\partial P\sqcup\mu^*$. Therefore, after isotopying $Q$ and $Q'$ to intersect minimally we must have
\[
\partial Q\cap\partial Q'=r^*\cap\mu^*\subset\partial N(K^*)\quad\text{and}\quad
|\partial Q\cap\partial Q'|=p^*=2
\]
Since the boundary circles $r^*$ and $\mu^*$ lie in $\partial N(K^*)$ and intersect coherently, that $|\partial Q\cap\partial Q'|=|r^*\cap\mu^*|=2$ contradicts the {\it parity rule}, a consequence of orientability:
after providing each component of $\partial Q,\partial Q'$ with the orientation induced by some orientation of $Q,Q'$, an arc component of $Q\cap Q'$ with endpoints on components of $\partial Q\cap\partial N(K^*)$ that have the same orientation on $\partial N(K^*)$ must run between components of $\partial Q'\cap\partial N(K^*)$ that have opposite orientations on $\partial N(K^*)$.

\medskip
{\bf Case 2:} $r^*=\partial_*P^*\not\subset\Gamma$.
\\
Then $\Gamma\subseteq\partial P$ and, as $X_{L^*}\subset X_L$, the surface $Q$ is properly embedded in $X_L$. By minimality of $(L,P)$, we must have $\Gamma=\partial P$ and so, after isotopying $Q$, we may assume that $Q$ and $P$ intersect minimally with $\partial Q\cap\partial P=\emptyset$ and $Q\cap P$ a collection of circles. 

For each $1\leq i\leq N$, $(Q\cup P)\cap\partial N(K_i)$ consists of two disjoint circles of slope $r_i$. Let $A_i$ be one of the annular closed components of $\partial N(K_i)\setminus (Q\cup P)$, and let $S$ be the union of $Q\cup P$ and the annuli $A_i$, $1\leq i\leq N$; that is, $S$ is the result of tubing $P$ and $Q$ along $\partial X_L$. Since $|K^*\cap P|=1$, $S$ is an immersed closed surface in $\mS^3$ which intersects $K^*$ minimally in one point, an impossibility.

Therefore the pair $(L^*,P^*)$ is minimal and so, by the argument above, $P^*$ is incompressible in $X_{L^*}$.
\end{proof}

\begin{prop}\label{at3}
If the link $L=K_1\sqcup\cdots\sqcup K_N\subset\mS^3$ is hyperbolic and $P=X_0(p_1,\dots,p_N)\subset X_L$ is a spanning planar surface such that the link pair $(L,P)$ is minimal then, for all integers $n\in\mZ$, the link $L^*_n=L\sqcup K^*_n\subset\mS^3$ is hyperbolic and the link pair $(L^*_n,P^*_n)$ is minimal. Moreover, if $P$ has large boundary slopes then $P^*_n=X_0(p_1^*,p_2,\dots,p_N,2)\subset X_{L^*_n}$ has large boundary slopes for all but at most one integer $n\in\mZ$.
\end{prop}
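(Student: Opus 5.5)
The plan is in three parts: minimality, the large-slope count, and hyperbolicity. Fix $n\in\mZ$ and write $K^*=K^*_n$, $L^*=L^*_n$, $P^*=P^*_n$.

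\emph{Minimality.} This is immediate from Lemma~\ref{at1}(2), which also applies to $(L^*_n,P^*_n)$. Hence $P^*$ is incompressible in $X_{L^*}$.

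\emph{Large slopes.} The slopes $r_i$ for $i\ge 2$ are unchanged, and $r^*(n)=a^*(n)/2$ always has denominator $2$. The computation in \S\ref{st3} shows that if $p_1\ge 2$ and some $r^*_1(m)$ has $p^*_1(m)\le 1$, then $p^*_1(n)\ge 3$ for every $n\neq m$. Suppose two values $m\neq m'$ were both small. Applying the same inequality with $n=m'$ would give $p^*_1(m')\ge 3$, a contradiction. So at most one $n$ gives a small slope.

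\emph{Hyperbolicity.} By Lemma~\ref{unsp}, $X_{L^*}$ is irreducible, so $L^*$ is unsplittable. Since $|L^*|=N+1\ge 4$, Lemma~\ref{large}(2) reduces hyperbolicity to showing that $X_{L^*}$ is atoroidal.

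Let $T\subset X_{L^*}$ be an essential torus. Viewed in $X_L$, $T$ is either compressible or boundary parallel, because $L$ is hyperbolic.

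Suppose first that $T$ is compressible in $X_L$. Since $X_L$ is irreducible, $T$ bounds a solid torus $V\subset X_L$ or lies in a ball. If $T$ lies in a ball, the ball meets $L^*$ only in $K^*$. Then $K^*$ sits inside a ball in $X_L$, which contradicts $|K^*\cap P|=1$ together with the incompressibility of $P$ (the parity/intersection argument of Lemma~\ref{at1}, Case 2). If instead $T$ bounds a solid torus $V$ containing $K^*$, then $T$ incompressible in $X_{L^*}$ means $K^*$ is not contained in a ball in $V$. I would then isotope $T$ to meet $P$ minimally. The meridian disk of $V$ meets $K^*$ and $P$. Using that $K^*$ meets $P$ once and is a cable of $K_1$ lying in $N(P)\cup N(K_1)$, I would aim to show that $V$ must swallow $N(K_1)$-type structure, contradicting $V\subset X_L$. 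Otherwise $T$ is parallel in $X_{L^*}$ to $\partial N(K^*)$.

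Suppose next that $T$ is boundary parallel in $X_L$, say to $\partial N(K_j)$. Then $K^*$ lies in the collar region $\partial N(K_j)\times I$. Since $K^*$ meets $P$ once, while $P$ meets the collar in a spanning annulus, this forces $j=1$. It also forces $K^*$ to be isotopic in the collar to a curve on $\partial N(K_1)$. But $K^*$ meets $P$ algebraically once, whereas a curve on $\partial N(K_1)$ meets the spanning annulus $P\cap$(collar) with algebraic intersection given by $\Delta(\cdot,r_1)$ times something. By construction the cable $K^*$ has $\Delta=1$ with $r_1$ consistently, so $T$ becomes the boundary of a neighborhood of $N(K_1)\cup K^*$. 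To rule this out I would compute: the region between $T$ and $\partial N(K_1)\sqcup\partial N(K^*)$ is a cable space, so $T$ is genuinely essential. This would contradict Lemma~\ref{at1}(2) only if $P^*$ could be made disjoint from $T$. Intersecting $P^*$ with $T$ minimally, and using that both $r^*_1$ and $r^*$ lie inside, should yield a planar surface bounded by a proper subset of the slopes, contradicting minimality.

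The main obstacle is this last case: $T$ parallel in $X_L$ to $\partial N(K_1)$, essentially cobounding a cable space with $K^*$. For that case my first attempt will be the minimality argument via $P^*\cap T$, using the parity rule from Lemma~\ref{at1}.
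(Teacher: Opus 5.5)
Your minimality and large-slope arguments are fine and agree with the paper. The hyperbolicity part, however, is not a proof, and the one case you do close there rests on a false inference.

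In the ball case you conclude that $K^*$ lies in the ball. That does not follow. In fact your own observation $K^*\cdot P=\pm1$ shows that $K^*$ is never contained in a ball of $X_L$, so the ball case is precisely the case where $K^*$ leaves the ball, and it is where the real work lies. The configuration is: $T$ bounds a nontrivial knot exterior $T^-$ containing no component of $L^*$; the complementary solid torus $T^+$ contains all of $L^*$; and a meridian disk $D$ of $T^+$ misses $L$ (since $L$ is hyperbolic) but must meet $K^*$. Then $T$ lies in the ball $T^-\cup N(D)\subset X_L$, while $K^*$ runs through $T^+$.

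The paper finishes this case with a parity count. The piece $P^*_3\subset T^-$ is an essential annulus in the knot exterior $T^-$, so $\partial P^*_3$ has integral slope with respect to $T^+$. Let $P^*_1$ be the piece of $P^*$ cut along $\partial P^*_3$ that does not contain $\partial_*P^*$, and let $E=D\cap X_{L^*}$. Then $\partial E\cap\partial P^*_1$ is a single point: the intersection on $T$ of a meridian with an integral curve. The rest of $\partial E$ lies on $\partial N(K^*)$ and the rest of $\partial P^*_1$ on $\partial X_L$. One point is impossible, because $E\cap P^*_1$ consists of arcs. Nothing in your proposal replaces this.

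The other two cases ($T$ bounding a solid torus $V\subset X_L$ around $K^*$, or $T$ parallel in $X_L$ to some $\partial N(K_j)$) are only announced. They also contain unjustified steps: that $j=1$ is forced, and that $K^*$ is isotopic in the collar into a level torus.

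The paper's tool here is an annulus swap. Isotope $T$ to meet $P^*$ minimally; the intersection is nonempty because $X(P^*)\approx X(P)\cup$ 1-handle is atoroidal. Take an annular piece $A$ of $T$ cut along $P^*$, and cut $P^*$ along $\partial A$ into $P^*_1,P^*_2,P^*_3$ with $\partial A\subset P^*_3$. Apply minimality of $(L^*,P^*)$ to the planar surface $P^*_1\cup A\cup P^*_2$. This forces $P^*_3$ to be an annulus carrying no component of $\partial P^*$, so all of $L^*$ lies on one side of $T$ and you are back in the configuration above.

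Note, though, that the swap needs $\partial_1A\neq\partial_2A$, i.e.\ at least two circles in $T\cap P^*$. A torus separating $L^*$ nontrivially can meet $P^*$ in a single circle. Your cable-space configuration is exactly of this kind, and your worry about it is justified. Suppose $\gamma_P$ cobounded a disk in $P$ with an arc of $\partial_1P$. Then $K^*$ would be isotopic onto a torus parallel to $\partial N(K_1)$. The torus enclosing $N(K_1)\cup K^*$ would cobound a copy of $S^1\times(\text{pants})$ with $\partial N(K_1)\sqcup\partial N(K^*)$, it would be essential in $X_{L^*}$, and $L^*$ would not be hyperbolic.

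So a complete argument has to use, at exactly this point, that $\gamma_P$ separates $\partial_2P$ from the remaining $\partial_iP$. Your proposal stops before doing so, and the paper's written proof does not treat the single-circle case explicitly either.
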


\begin{proof}
If the link pair $(L,P)$ is minimal then, by Lemma~\ref{at1}(2), for all $n\in\mZ$ the link pair $(L^*_n,P^*_n)$ is minimal and the surfaces $P\subset X_L$ and $P^*_n\subset X_{L^*_n}$ are incompressible. 

As $|L_n^*|=N+1\geq 4$, by Lemma~\ref{large}(2) the link $L^*_n\subset\mS^3$ is hyperbolic iff its exterior  $X_{L_n^*}$ is atoroidal.

\medskip
For ease of notation, we fix an integer $n\in\mZ$ and denote the link $L^*_n$ simply by $L^*$ until the index $n$ becomes relevant again.

\medskip
Suppose that the link $L^*\subset\mS^3$ is not hyperbolic and let $T\subset X_{L^*}$ be an essential torus. Denote by $T^+,T^-$ the closed components of $\mS^3\setminus T$. 

\medskip
Since the surfaces $P\subset X_L$ and $P^*\subset X_{L^*}$ are incompressible, by Lemma~\ref{unsp} the manifolds $X_L$ and $X_{L^*}$ are irreducible and boundary irreducible and so both links $L$ and $L^*$ are unsplittable.
As $X_L$ is hyperbolic and $P\subset X_L$ is incompressible, $X(P)$ must be an an atoroidal manifold.
By Lemma~\ref{at1}(1), $X(P^*)$ is also atoroidal and so, after isotopying $T\subset X_{L^*}$ to intersect $P^*$ minimally, we must have $T\cap P^*\neq\emptyset$. Therefore $T\cap P$ is a collection of circles, each of which is nontrivial in $T$ and $P$.

\medskip
Let $A$ be an annular closed component of $T\setminus P^*$, and let $P^*_1,P^*_2,P^*_3\subset P$ be the closed components of $P^*\setminus A$ with $\partial_1A\subset P^*_1$, $\partial_2A\subset P^*_2$ and $\partial A\subset P^*_3$. Then $Q^*=P^*_1\cup A\cup P^*_2$ is a planar surface properly embedded in $X_{L^*}$ with $\partial Q^*\subseteq\partial P^*$ and so, by minimality of $(L^*,P^*)$, we must have $\partial Q^*=\partial P^*$. 

It follows that the circles $\partial A$ are mutually parallel in $P^*$, that is, $P^*_3\subset P^*$ is an annulus, and hence that $T\cap P=\partial A=\partial P^*_3$. Therefore, we must have
\[
N(L^*)\cup P^*_1\cup P^*_2\subset T^+\quad\text{and}\quad P^*_3\subset T^-
\]
which implies that $T^+$ is a solid torus and $T$ is incompressible in $T^-$.

Therefore the core knot of $T^+$ is a nontrivial knot in $\mS^3$ with exterior $T^-$; moreover, by minimality of $T\cap P^*$, the annulus $P^*_3\subset T^-$ is essential, hence the boundary slope of $P^*_3$ is integral in $T^+$.

Let $D$ be a meridian disk of $T^+$ which intersects $L^*$ transversely in as few points as possible. Since the link $L\subset L^*$ is hyperbolic, we must have $D\cap L=\emptyset$, while $D\cap K^*\neq\emptyset$ since $T$ is incompressible in $T^+\setminus L^*$.

Without loss of generality, we may assume that $r^*\not\subset\partial P^*_1$. Let $XT^+=\cl[T^+\setminus N(L^*)]$ be the exterior of $L^*$ in $T^+$, with $\partial XT^+=T\sqcup\partial X_{L^*}$, so that  $P^*_1$ and $E=D\cap XT^+$ are properly embedded essential planar surfaces in $XT^+$.

Now, $\partial E\cap\partial X_{L^*}$ consists of meridian circles of $K^*$ in $\partial N(K^*)\not\subset\partial X_L$ and $\partial E\cap T$ is a single meridian circle of $T^+$. 
Since all components of $\partial P^*\cap\partial X_{L^*}$ lie on $\partial X_L$ and $\partial P^*_1\cap T$ is a single circle component of integral slope in $T^+$, it follows that $E\cap P^*_1$ is a single point, which is impossible.

\medskip
Therefore the link $L^*_n\subset\mS^3$ is hyperbolic for all $n\in\mZ$. Since $P$ has large boundary slopes $r_i=a_i/p_i$ for $1\leq i\leq N$, with $p_i\geq 2$, and 
$r^*_i=r_i$ for $2\leq i\leq N$,
$r^*_n=a(n)/2$ and $r^*_1(n)=r^*_1+2n\cdot r_1$, it follows by \S\ref{st3} that $P^*$ has large boundary slopes for all but at most one integer $n\in\mZ$.
\end{proof}

\medskip
\begin{proof}[Proof of Theorem~\ref{thm3}:]
We proceed by induction on the number of components $N=|L|\geq 3$ of the link $L\subset\mS^3$:
by Theorem~\ref{thm1} the infinite family of link pairs $(L,P)\in\mc{L}$ provide the initial step $N=3$, while the inductive step $N\rightarrow N+1$ follows from Proposition~\ref{at3}. Therefore the first part of the theorem holds.

\medskip
For the second half, by \cite[Proposition 5.4]{eudave8} there are infinitely many hyperbolic \EM\ knots $K_1\subset\mS^3$ such that, for one of the handlebodies $H\subset\{T^+,T^-\}$ in item (T3) of \S\ref{intro}, $H(r_1)=\mD^2(p_2,p_3)$ for some integers $p_2,p_3\geq 3$. Let $(L,P)\in\mc{L}$ be the link pair with $L=K_1\sqcup K_2\sqcup K_3\subset\mS^3$ and $P=X_0(2,p_2,p_3)\subset X_L$ the spanning planar surface constructed in \S\ref{mcL} using the handlebody $H\subset X_{K_1}$. Then $L$ is a 3-component hyperbolic link and so the link pair $(L,P)$ is minimal.

A sequence of minimal link pairs $(L_N,P_N)$ for $N\geq 3$, with $(L_3,P_3)=(L,P)$, can then be constructed inductively where at each step $N\to N+1$ of the induction process, the components of $L_N=K_1\sqcup\cdots\sqcup K_N$ are relabeled so that $p_1=2$ and the knot $K_{N+1}=K^*$ is constructed as a cable of $K_1$ with $p_1*\geq 3$.
Therefore $(L_{N+1},P_{N+1})$ is a minimal link pair with $L_{N+1}$ a hyperbolic link and $P_{N+1}=X_0(\wh{p}_1,\dots, \wh{p}_N,2)$ for some integers satisfying $\wh{p}_1=p_1^*\geq 3$ and $\wh{p}_i=p_i\geq 3$ for $2\leq i\leq N$.
\end{proof}

\section{Primitive circles}\label{prims2}

By Lemma~\ref{lemB}, for each link pair $(L,P)$ where $L\subset\mS^3$ is a hyperbolic link and $P=X_0(p_1,p_2,p_3)\subset X_L$ is a spanning planar surface with $p_1,p_2,p_3\geq 2$, in $X(P)$ each boundary slope $r_1,r_2,r_3$ of $P$ is either primitive or neither a primitive nor a power circle; by Lemma~\ref{lemD}, to conclude that $(L,P)\in\mc{L}$ it thus suffices to show that not all slopes $r_1,r_2,r_3$ can be primitive in $X(P)$.

We address this issue by first classifying all embeddings of three mutually disjoint and nonparallel circles in the boundary $\partial H$ of a genus two handlebody $H$. The embeddings of two such primitive circles in $\partial H$ were classified in \cite[Proposition 5.1]{berge2}, but extending such classification to 3 primitive circles is not easily achieved. We therefore attack the classification of the embeddings of 3 primitive circles in $\partial H$ from a fresh perspective, using a suitable variation of the approach in \cite{berge2}.

We then proceed to show that no such embedding of 3 primitive circles $r_1,r_2,r_3$ in $\partial X(P)$ can be realized by any 3-component link pair $(L,P)$ in $\mS^3$, hyperbolic or not. This part relies heavily on the fact that the link $L$ lies in $\mS^3$, and we handle it via Kaneto's Theorem \cite{kaneto2} with an approach similar to that of \cite[\S 7]{valdez14}. 

\subsection{Primitive circles in a genus two handlebodies}\label{claprim}
In this section we assume that $\Gamma=a\sqcup b\sqcup c\subset\partial H$ is a collection of three mutually disjoint and nonparallel primitive circles in $H$.

\begin{prop}\label{3prims}
If $x\subset H$ is a nonseparating disk which intersects $\Gamma$ transversely in the smallest possible number of points among such disks then $|x\cap\Gamma|\in\{2,4\}$.
\end{prop}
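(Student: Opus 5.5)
The plan is to prove separately that the minimal intersection number $|x\cap\Gamma|$ is at most $4$, that it is even, and that it is nonzero; together these give $|x\cap\Gamma|\in\{2,4\}$.

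\emph{Nonzero.} If some nonseparating disk $x$ missed $\Gamma$, then $a\sqcup b\sqcup c$ would lie in the twice punctured torus $\partial H\setminus N(\partial x)$. By \S\ref{coprim} and \S\ref{many}, $x$ is then the unique nonseparating compression disk of $\partial H\setminus a$, and likewise of $\partial H\setminus b$ and $\partial H\setminus c$. Each circle is primitive, so it runs once around the solid torus obtained by cutting $H$ along $x$. In a twice punctured torus, three disjoint nonparallel nonseparating circles that each run once around the solid torus cannot all be primitive and pairwise nonparallel in $\partial H$. A short case check of the arcs, using the two copies of $x$ in the cut-open boundary, should give a contradiction; otherwise two of the circles would be parallel.

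\emph{Even.} The natural tool is homology. Since $\Gamma$ consists of three disjoint nonparallel nonseparating circles in a genus two surface, $\partial H\setminus\Gamma$ is two pants, so $a+b+c$ bounds in $\partial H$ (with suitable orientations). Hence $x\cdot(a+b+c)=0$ algebraically, and the geometric count $|x\cap\Gamma|$ has the same parity as $0$.

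\emph{At most $4$.} This is the main step, and I would follow Berge's wave argument from \cite[Proposition 5.1]{berge2}. Take a complete disk system $x\sqcup y$ with $x$ minimizing $|x\cap\Gamma|$ and, subject to that, $|y\cap\Gamma|$ minimal. Consider the Heegaard-type diagram of $\Gamma$ on the four-punctured sphere obtained by cutting $\partial H$ along $\partial x\sqcup\partial y$. Each circle of $\Gamma$ is primitive, so by Lemma~\ref{pri} its word in $x,y$ has one generator appearing with all exponents $\pm1$ and the other with exponents $m,m+1$. Consequently the arcs of $\Gamma$ in the diagram form at most three parallel classes of connections between each pair of punctures. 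If $|x\cap\Gamma|\geq 6$, I would produce a wave: an arc in $\partial H\setminus\Gamma$ with endpoints on $\partial x$ that is disjoint from $\Gamma$. Surgery of $x$ along this wave yields two disks, and at least one of them is nonseparating and meets $\Gamma$ in fewer points, contradicting minimality. The existence of the wave comes from the fact that the three circles share the same pants structure. The parallel arc classes in the diagram then force some face of $\partial H\setminus(\Gamma\cup x\cup y)$ to have two consecutive sides on $\partial x$ from the same side.

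The main obstacle is the combinatorial verification that a wave exists whenever $|x\cap\Gamma|\geq6$, handling both the sign pattern of Lemma~\ref{pri} and the possibility that $y$, rather than $x$, carries the $\pm1$ exponents. I would first reduce to the case where the exponent-$\pm1$ generator is $x$, by swapping roles and using minimality. I would then count edges around the punctures of the diagram, in the manner of Berge's Whitehead-graph argument, to exhibit the required face.
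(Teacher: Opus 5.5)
\textbf{There is a genuine gap.} Your parity step is correct. Your nonvanishing step reaches a true conclusion but needs no case check: $\partial H\setminus\Gamma$ is two pants, so a disk boundary missing $\Gamma$ would be parallel to a component of $\Gamma$, which cannot bound a disk since it is primitive.

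The gap is the bound $|x\cap\Gamma|\le 4$, which is the whole content of the proposition. You assert that a wave at $x$ exists whenever $|x\cap\Gamma|\geq 6$. The only support offered (``the three circles share the same pants structure'', ``the parallel arc classes force some face to have two consecutive sides on $\partial x$ from the same side'') is not an argument, and you yourself leave it as the unresolved obstacle. Nothing in the proposal explains where the number $4$ comes from.

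The preparatory steps also do not work as stated:
\begin{enumerate}
\item Lemma~\ref{pri} applies only to cyclically reduced words. You must first show that the words of $a,b,c$ relative to your $x\sqcup y$ are cyclically reduced, which needs its own minimality argument.
\item You cannot ``swap roles'' to make $x$ the generator with exponents $\pm1$. The disks $x$ and $y$ are not symmetric under your minimization. Moreover, different components of $\Gamma$ may have different such generators: one circle may have an arc meeting $x$ at least twice while another has an arc meeting $y$ at least twice. This mixed situation has to be handled separately.
\end{enumerate}

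For comparison, the paper never looks for waves. Among all pairs $(x,E)$, with $x$ a minimal nonseparating disk and $E$ a nontrivial separating disk disjoint from $x$, it chooses one with $|E\cap\Gamma|$ minimal. The argument then runs as follows.
\begin{enumerate}
\item Primitivity gives an arc of $\Gamma\setminus x$ running from one side of $x$ to the other; otherwise some component $\alpha$ would have $\alpha\cdot x=0$ and $\alpha\cdot y$ even.
\item Banding $x$ to itself along the class of $N$ parallel such arcs in $\cl[\partial H\setminus N(\partial x)]$ gives a separating disk disjoint from $x$ meeting $\Gamma$ in $2|x\cap\Gamma|-2N$ points. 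Hence $|E\cap\Gamma|\le 2|x\cap\Gamma|-2$.
\item This inequality is what forces the words to be cyclically reduced and the intersections with $x,y$ to be coherent.
\item When the inequality is an equality, the same banding shows that each of the at most four parallelism classes of arcs of $\Gamma$ in $\cl[\partial H\setminus N(\partial x)]$ contains a single arc, so $|x\cap\Gamma|\le 4$.
\item A case analysis on whether some component of $\Gamma$ misses $E$ shows that either equality holds or the minimality of $x$ is contradicted. It uses that each side of $\partial E$ carries at most three disjoint parallelism classes of nontrivial arcs.
\end{enumerate}
A counting argument of this kind, tying the minimal nonseparating disk to an auxiliary separating disk, is what your proposal is missing.
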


\begin{proof}
The proof is organized as a sequence of steps labeled (I), (II), etc.\ for easy reference, with some steps providing the proofs of claimed statements.

{\bf (I):} Let $\mc{D}$ be the set of all pairs $(D,D')$ such that $D\subset H$ is a nonseparating disk that intersects $\Gamma$ transversely in the smallest possible number of points and $D'\subset H$ is a nontrivial separating disk disjoint from $D$.
Choose a pair $(x,E)\in\mc{D}$ such that $|E\cap\Gamma|$ is smallest among all pairs in $\mc{D}$.

\medskip
{\bf (II):}
The disk $E$ separates $H$ into two solid tori $V_x,V_y$ with meridian disks $x\subset V_x\setminus E$ and $y\subset V_y\setminus E$, which we may assume intersect $\Gamma$ minimally. Thus $x\sqcup y\subset H$ is a complete disk system and we can write $\pi_1(H)=\grp{x,y \ | \ -}$.
Similarly, the circle $\partial E$ separates $\partial H$ into two twice punctured tori $T_x\subset\partial V_x$ and $T_y\subset\partial V_y$.

\medskip
The collection $\Gamma$ separates $\partial H$ into two pants and so, for any nontrivial disk $z\subset H$ which intersects $\Gamma$ transversely,
$|z\cap\Gamma|$ is a nonzero even integer. In particular, 
$z\cap\Gamma\neq\emptyset$ for $z\in\{x,y,E\}$.

\medskip
{\bf (III):} 
{\it Claim: Some closed arc component of $\Gamma\setminus x$ has endpoints on either side of $x$. }

By (II) we have that $\alpha\cap x\neq\emptyset$ for some component $\alpha\subset\Gamma$. 

\begin{figure}
\Fig{1}{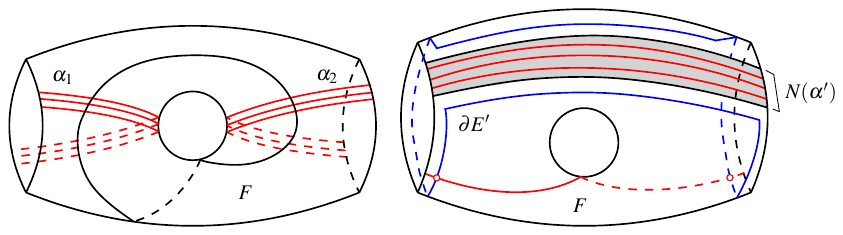}{The twice punctured torus $F=\cl[\partial H\setminus N(\partial x)]$.}{oz83}
\end{figure}

Suppose that each closed arc component of $\alpha\setminus x$ has both endpoints on the same side of $x$; in particular, $\alpha\cdot x=0$. Let $N(\partial x)\subset\partial H$ be a thin regular neighborhood of $\partial x$ and consider the twice punctured torus $F=\cl[\partial H\setminus N(\partial x)]\subset\partial H$. Then $F\cap\alpha$ consists of two parallelism classes of arcs $\alpha_1$ and $\alpha_2$ such that each arc component of $\alpha_i$ has both endpoints on $\partial_iF$, whence $|\alpha_1|=|\alpha_2|\geq 1$. The collections $\alpha_1,\alpha_2$ separate $F$ into two annuli and perhaps some rectangular disks, as represented in Fig.~\ref{oz83}, left. It follows that any circle in $F$ can be isotoped to intersect the arcs $F\cap\alpha$ in an even number of points, in fact in some multiple of $2|\alpha_1|$. In particular $\alpha\cdot y$ must be even, which along with $\alpha\cdot x=0$ contradicts the hypothesis that $\alpha$ is a primitive circle in $H$. 
Therefore (III) holds.

\medskip
{\bf (IV):} {\it Claim:} $|E\cap\Gamma|\leq 2|x\cap\Gamma|-2$.
\\
Let $F$ the twice punctured torus $\cl[\partial H\setminus N(\partial x)]\subset\partial H$.
By (III) there is an arc component $\alpha'$ of $\Gamma\cap F$ with one endpoint on each component of $\partial F$.
If $N(\alpha')\subset F$ is a regular neighborhood which contains all $N\geq 1$ arcs in $\Gamma\cap F$ parallel to $\alpha'$ and intersects  no other components of $\Gamma\cap F$, then the circle $\partial N(\alpha'\cup\partial x)\subset\partial H$ bounds a nontrivial separating disk $E'\subset H$ disjoint from $x$ which intersects $\Gamma$ transversely with (see Fig.~\ref{oz83}, right)
\[
|E\cap\Gamma|\leq|E'\cap\Gamma|=2|x\cap\Gamma|-2N\leq 
2|x\cap\Gamma|-2
\]

\medskip
{\bf (V):} {\it Claim: For $u\in\{x,y\}$, no arc component of $\Gamma\cap T_u$ is disjoint from $u$. In particular, for each component $\alpha\subset\Gamma$ the word $\alpha(x,y)\in\pi_1(H)$ is cyclically reduced and so $\alpha$ intersects each disk $x,y$, if at all, coherently.}

\medskip
For suppose that $\{u,v\}=\{x,y\}$ and some arc component of $\Gamma\cap T_u$ is disjoint from $u$. Thus each arc component of $\Gamma\cap T_u$ intersects $u$ minimally in at most one point.

Let $N\geq 1$ be the number of arc components of $\Gamma\cap T_u$ which are disjoint from $u$. Since each arc component of $\Gamma\cap T_u$ has its two boundary points on $\partial E$, we must have
\[
|E\cap\Gamma|=2|u\cap\Gamma|+2N>2|u\cap\Gamma|\geq 2|x\cap\Gamma|
\]
contradicting  (IV).

Therefore the word $\alpha(x,y)$ is cyclically reduced for each component $\alpha\subset\Gamma$, so by \S\ref{prim0} the primitive circle $\alpha$ intersects each disk $x,y$, if at all, coherently, and each arc component of $\Gamma\cap T_u$ intersects $u$ in at least one point.

\medskip
{\bf (VI):} 
As $\Gamma$ separates $\partial H$ into two pants, and by (V) 
each component of $\Gamma$ that intersects a disk $x,y$ does so  coherently, it follows that the circle $\partial x$, and similarly $\partial y$, does not intersect any component of $\Gamma$ consecutively.

\medskip
{\bf (VII):} 
{\it Claim: If $|E\cap\Gamma|=2|x\cap\Gamma|-2$ then $|x\cap\Gamma|\leq 4$.}
\\
Let $F\subset\partial H$ be the twice punctured torus $\cl[\partial H\setminus N(\partial x)]$.
By (V) each component of $\Gamma$ intersects $x$ coherently and so each arc component of $\Gamma\cap F$ runs from one boundary component of $F$ to the other, hence in  $F$ there are at most 4 parallelism classes of arc components of $\Gamma\cap F$. If one such parallelism class has $N\geq 1$ components then, as in (IV), a separating disk $E'\subset H$ can be constructed with 
\[
2|x\cap\Gamma|-2=|E\cap\Gamma|\leq|E'\cap\Gamma|=2|x\cap\Gamma|-2N
\implies N=1
\]
Therefore each parallelism class of arcs $\Gamma\cap F$ in $F$ consists of one arc and so $|x\cap\Gamma|=|\Gamma\cap F|\leq 4$.

\medskip
{\bf (VIII):}
{\it Claim: If some component of $\Gamma$ is disjoint from $E$ then $|x\cap\Gamma|\leq 4$.}

\medskip
For each $u\subset\{x,y\}$, any circle component of $\Gamma$ that is contained in $T_u$, being primitive in $H$, must intersect $u$ minimally in one point. Along with (V), this implies that each arc component of $\Gamma\cap T_u$ also intersects $u$ minimally in one point. 

As the components of $\Gamma$ are mutually nonparallel in $\partial H$, $T_u$ contains at most one component of $\Gamma$.
Therefore, if $T_u$ contains a component of $\Gamma$ and $n\geq 1$ is the number of arc components in $\Gamma\cap T_x$,
then $|E\cap\Gamma|=2n$ and $|x\cap\Gamma|\leq|u\cap\Gamma|=n+1$, whence
\begin{align*}
& 2n=|E\cap\Gamma|
\stackrel{(IV)}{\leq}
2|x\cap\Gamma|-2\leq 
2|u\cap\Gamma|-2=2n
\\
\implies &
|E\cap\Gamma|= 2|x\cap\Gamma|-2
\quad\text{and}\quad
|x\cap\Gamma|=|u\cap\Gamma|
\end{align*}
Thus $|x\cap\Gamma|\leq 4$ follows from (VII). 
As $|x\cap\Gamma|=|u\cap\Gamma|$, we may further assume that $u=x$.

\medskip
{\bf (IX):} 
{\it Claim: If no component of $\Gamma$ is disjoint from $E$ then $|x\cap\Gamma|\leq 4$.}
\\
By hypothesis, each collection $\Gamma\cap T_x$ and $\Gamma\cap T_y$ consists of arc components and no circle components. 
If for some $v\in\{x,y\}$ each arc component of $\Gamma\cap T_v$ intersects $v$ in at most one point, and hence by (V) in exactly one point, then we have 
\[
2|v\cap\Gamma| = |E\cap\Gamma|
\stackrel{(IV)}{\leq} 
2|x\cap\Gamma|-2\implies |v\cap\Gamma|<|x\cap\Gamma|
\]
contradicting the absolute minimality of $|x\cap\Gamma|$.

Therefore, there are arc components $\alpha_x\subset\Gamma\cap T_x$ and $\beta_x\subset\Gamma\cap T_y$ which intersect $x$ and $y$, respectively, in at least two points. Since each component of $\Gamma$ is a primitive circle in $H$ which intersects both $x$ and $y$ and by (V) represents a cyclically reduced word in $\pi_1(H)=\grp{x,y \ | \ -}$, by \S\ref{prim0} the arcs $\alpha_x$ and $\beta_y$ must be part of distinct components of $\Gamma$.

We may therefore assume that $\alpha_x\subset a\cap T_x$ and $\beta_y\subset b\cap T_y$.
By \S\ref{prim0} and (V) it follows that each arc component of $a\cap T_y$ and $b\cap T_x$ intersects $y$ and $x$, respectively, in one point.
Similarly, for some $u\in\{x,y\}$, each arc component of $c\cap T_u$ intersects $u$ in one point.

\medskip
Let $(\Gamma\cap T_u)',(\Gamma\cap T_u)''\subset \Gamma\cap T_u$ consists of all the arc components which intersect $u$ in one or in at least 2 points, respectively.

\medskip
Observe that the arcs in $(\Gamma\cap T_u)''$ are all subarcs of $a$ or all subarcs of $b$. So, if the arcs in $(\Gamma\cap T_u)'$ are mutually parallel in $T_u$ then $u$ intersects arcs of $(\Gamma\cap T_u)''$ consecutively, contradicting (VI).

Therefore the arcs in $(\Gamma\cap T_u)'$ form two parallelism classes in $T_u$. Since $T_u$ contains at most 3 mutually disjoint parallelism classes of nontrivial arcs, if $u=x$ then necessarily $|\alpha_x\cap x|=2$ and any other arc component of $(\Gamma\cap T_x)''$ is parallel to $\alpha_x$, hence
$(\Gamma\cap T_x)''=\alpha_x$ by (VI), as represented in Fig.~\ref{oz86}. Similarly, if $u=y$ then $|\beta_y\cap y|=2$ and $(\Gamma\cap T_y)''=\beta_y$. 

\begin{figure}
\Fig{.8}{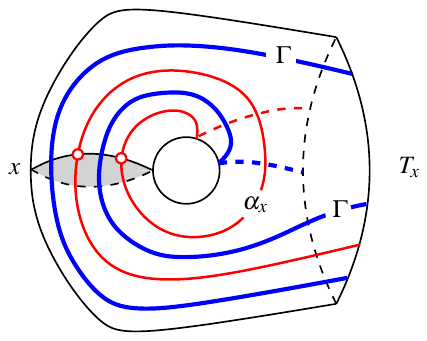}{The arcs $\alpha_x$ and $(\Gamma\cap T_x)'$ in $T_x$.}{oz86}
\end{figure}

\medskip
Since $\Gamma\cap T_u=(\Gamma\cap T_u)'\sqcup\gamma_u$ for some $\gamma_u\in\{\alpha_x,\beta_y\}$ with $|\gamma_u\cap u|=2$,  
if $n=|(\Gamma\cap T_u)'|\geq 2$ then $|\Gamma\cap T_u|=n+1$, 
$|x\cap\Gamma|\leq|u\cap\Gamma|=n+|\gamma_u\cap u|=n+2$, and 
\[
2|x\cap\Gamma|-2
\stackrel{(IV)}{\geq}
|E\cap\Gamma|=2|\Gamma\cap T_u|=2(n+1)=2|u\cap\Gamma|-2
\geq2|x\cap\Gamma|-2
\]
which implies that 
$|E\cap\Gamma|=2|u\cap\Gamma|-2=2|x\cap\Gamma|-2$ and hence that $|x\cap\Gamma|\leq 4$ by (IV).

\medskip
The proposition now follows from (VIII) and (IX).
\end{proof}

The next result provides the possible embeddings of $\Gamma=a\sqcup b\sqcup c$ in $\partial H\subset H$.

\begin{lem}\label{3prims2}
Let $x\subset H$ be a nonseparating disk which intersects $\Gamma=a\sqcup b\sqcup c\subset\partial H$ minimally in as few points as possible. Then 
\begin{enumerate}
\item
$|x\cap\Gamma|\in\{2,4\}$,

\item
there is a complete disk system $x\sqcup y\subset H$ such that, up to permutation of the labels $a,b,c$, the 6-tuple $(H,x,y,a,b,c)$ is homeomorphic to one of the 6-tuples in Fig.~\ref{oz11}, where $m,n,m+n$ are nonzero integers and $m-n=\ve=\pm1$,

\item
if $|x\cap\Gamma|=4$ 
then there is a nonseparating circle $\gamma\subset\partial H$ which is disjoint from $x\cup c$ and intersects each circle $a,b,\partial y$ minimally in one point, with $\gamma(x,y)=y^{\ve}$ in $\pi_1(H)$. 
\end{enumerate}
\end{lem}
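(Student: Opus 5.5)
Part (1) is Proposition~\ref{3prims}, so the work is in (2) and (3). I would reuse the setup of its proof: pick $(x,E)\in\mc{D}$ with $|E\cap\Gamma|$ minimal, let $y$ be the meridian of $V_y$, and record the facts already established there. By (V) each component of $\Gamma$ meets $x$ and $y$ coherently and gives a cyclically reduced word. By (VI) $\partial x$ and $\partial y$ never meet the same component of $\Gamma$ consecutively. By (IV) $|E\cap\Gamma|\le 2|x\cap\Gamma|-2$. The proofs of (VIII) and (IX) show that either some component of $\Gamma$ lies in $T_x$ or $T_y$, or one of $T_x,T_y$ contains one arc meeting $u$ twice while all its other arcs meet $u$ once. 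Each component of $\Gamma$ is primitive, so by Lemma~\ref{pri} its word is $x^{\pm1}$, $y^{\pm1}$, or of the form $\prod x^{\pm1}y^{e_i}$ with $e_i\in\{k,k+1\}$ (or with the roles of $x,y$ swapped).

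For (2) I would split on $|x\cap\Gamma|$. If $|x\cap\Gamma|=2$, then by coherence and (VI) the two points of $x\cap\Gamma$ lie on distinct components, say $a$ and $b$, each meeting $x$ once, and $c$ misses $x$. So $c$ lies in the solid torus $\partial H$ cut along $x$ and, being primitive, is a longitude there, i.e.\ $c=y^{\pm1}$ up to handle slides. Then $a$ and $b$ are disjoint primitive circles meeting $x$ once and separated from $c$. I would draw the Heegaard diagram of $\partial H$ cut along $x\sqcup y$, a sphere with four holes $x^\pm,y^\pm$. Disjointness and nonparallelism leave only finitely many arc-weight patterns on that planar surface, and each one should match a picture of Fig.~\ref{oz11}.

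If $|x\cap\Gamma|=4$, then by (VII)--(IX) each of the at most four parallelism classes of arcs of $\Gamma\cap F$ contains exactly one arc, and $|E\cap\Gamma|=6$. I would count arcs in $T_x$ and $T_y$, at most three parallelism classes each, together with the words. This forces, after relabeling, $c$ to be disjoint from $x$ (or to meet it once) and $a,b$ to meet $x$ in a pattern whose words are $xy^m$, $xy^n$ with exponents in adjacent classes by Lemma~\ref{pri}, whence $m-n=\ve=\pm1$. Disjointness of $a,b,c$ on the four-holed sphere should pin down the arc weights and produce the remaining pictures.

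For (3), in the $|x\cap\Gamma|=4$ pictures I would take $\gamma$ to be a circle in $T_y$ that runs once along a longitude direction of $V_y$, avoids $c$, and crosses the two parallel strands of $a,b$ between consecutive $y$-twists. Then $\gamma$ meets each of $a$, $b$, $\partial y$ once and misses $x$, so $\gamma(x,y)=y^{\pm1}$, and the sign is fixed to be $\ve$ by the orientation convention relating $m$ and $n$. Minimality of these intersections should follow from reducedness of the words. The main obstacle is the case enumeration in (2) for $|x\cap\Gamma|=4$. Here I would lean on Whitehead-graph style counting on the four-holed sphere: the edge weights between $x^\pm$ and $y^\pm$ must satisfy the symmetry $w(x^+,\cdot)=w(x^-,\cdot)$ and have no cut vertex. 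I expect this to leave only a small list, but checking that no configuration is missed is delicate.
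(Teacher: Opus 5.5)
\textbf{There is a genuine gap in the case $|x\cap\Gamma|=4$ and in part (3).}

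\textbf{The configuration you predict for $|x\cap\Gamma|=4$ is impossible, and $m-n=\ve$ does not follow.} If $a$ and $b$ read $xy^m$ and $xy^n$, each meets $x$ exactly once. If $c$ then meets $x$ at most once, you get $|x\cap\Gamma|\le 3$, contradicting $|x\cap\Gamma|=4$ and the evenness of $|x\cap\Gamma|$. Also, Lemma~\ref{pri} only constrains exponents inside a single primitive word. It gives no relation between the words of two different circles, since $xy^m$ and $xy^n$ are primitive for all $m,n$. The configuration that actually occurs, as in the paper, is $|x\cap a|=|x\cap b|=1$ and $|x\cap c|=2$. The integers $m,n$ of Fig.~\ref{oz11} are the two $y$-exponents of the single circle $c$, whose word is $xy^mxy^n$. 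Lemma~\ref{pri} applied to $c$ is what forces $m-n=\pm1$. The arc $b\cap T_y$ then reads $y^{m+n}$, and step (V) of Proposition~\ref{3prims} gives $m,n,m+n\neq 0$.

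\textbf{The missing idea is to replace the minimizing disk $E$ by $E_a=\fr N(x\cup a)$,} where $a$ is a component meeting $x$ once. This removes the four-holed-sphere enumeration you leave unchecked. With this disk, $a\subset T_x$, and each of $b\cap T_x$ and $b\cap T_y$ is a single arc. There are then only two possibilities:
\begin{itemize}
\item If $c\cap x=\emptyset$, then $c\subset T_y$.
\item If $|x\cap c|=2$, then $c\cap T_x$ is two arcs parallel to $b\cap T_x$. Since $c$ is nonseparating, $c\cap T_y$ is two nonparallel arcs disjoint from $b\cap T_y$.
\end{itemize}
These are exactly the two pictures.

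\textbf{This configuration also shows your circle $\gamma$ for (3) cannot exist.} A circle contained in $T_y$ misses $a\subset T_x$. Moreover, no essential circle of the once-punctured torus $T_y$ is disjoint from the two nonparallel arcs of $c\cap T_y$. So $\gamma$ must cross $\partial E_a$. Combine the arcs of $c\cap T_y$ reading $y^n$ and $y^{n+\ve}$ into an arc $\gamma_y\subset T_y$ that is disjoint from $c$, reads $y^{\ve}$, and meets $b\cap T_y$ once. Close it with an arc $\gamma_x\subset T_x$ that avoids $(b\sqcup c)\cap T_x$ and meets $a$ once.
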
 

\begin{proof}
Part (1) is the content of Proposition~\ref{3prims}.

By item (VI) in the proof of Proposition~\ref{3prims}, we may assume that 
\[
|x\cap a|=|x\cap b|=1
\quad\text{and}\quad
\begin{cases}
|x\cap c|=0 & \text{if }|x\cap\Gamma|=2
\\
|x\cap c|=2 & \text{if } |x\cap\Gamma|=4
\end{cases}
\]
The frontier $E_a$ of $N(x\cup a)\subset H$ is then a nontrivial separating disk in $H$ disjoint from $a$ which induces a complete disk system $x\sqcup y\subset H$. If $T_x, T_y$ are the closed components of $\partial H\setminus\partial E_a$ with $x\cup a\subset T_x$, then $b\cap T_x$ and $b\cap T_y$ each consist of one arc component.
So if $|x\cap\Gamma|=2$ then $c\subset T_y$ and so
$(H,x,y,a,b,c)$ is homeomorphic to the 6-tuple in Fig.~\ref{oz11}, left.

\begin{figure}
\Figw{\textwidth}{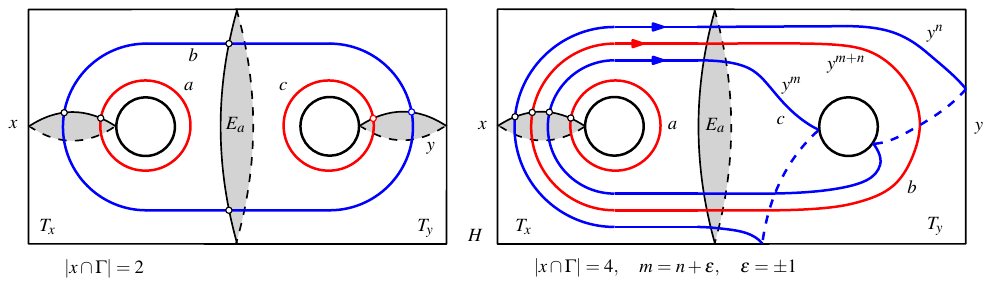}{The 6-tuples $(H,x,y,a,b,c)$.}{oz11}
\end{figure}

If $|x\cap\Gamma|=4$ then $c\cap T_x$ consists of two arcs parallel to $b\cap T_x$ and so, $c$ being a nonseparating circle, necessarily $c\cap T_y$ consists of two mutually nonparallel arcs disjoint from $b\cap T_y$. Therefore $(H,x,y,a,b,c)$ is homeomorphic to the 6-tuple in Fig.~\ref{oz11}, right.

Moreover, 
with the orientations of $b,c$ given in Fig.~\ref{oz11}, right, if
in $\pi_1(H)=\grp{x,y \ | \ -}$ the arcs $c\cap T_y$ read the powers $y^m$ and $y^n$, then the arc $b\cap T_y$ must read the power $y^{m+n}$. By item (V) in the proof of Proposition~\ref{3prims}, $m,n,m+n$ are all nonzero integers, so by \S\ref{prim0} the word $c(x,y)=xy^mxy^n$ is primitive in $\pi_1(H)$ iff $m-n=\ve=\pm1$. Therefore part (2) holds.

\medskip
For part (3),
by Lemma~\ref{3prims2}, and as shown in Fig.~\ref{oz63}, $\Gamma\cap T_y$ consists of 3 arcs that read the powers $y^n$, $y^{n+\ve}$ and $y^{2n+\ve}$. Combining the arcs representing the powers $y^n$, $y^{n+\ve}$, it is possible to construct an arc $\gamma_y\subset T_y$ disjoint from $c\cap T_y$ which reads the power $y^{\ve}$ and intersects the subarc of $b\cap T_y$ minimally in one point. Moreover, there is an arc $\gamma_x\subset T_x$ disjoint from $(b\sqcup c)\cap T_x$ which intersects $a$ minimally in one point with $\gamma_x\cap\partial T_x=\gamma_y\cap\partial T_y$. 

\begin{figure}
\Fig{1}{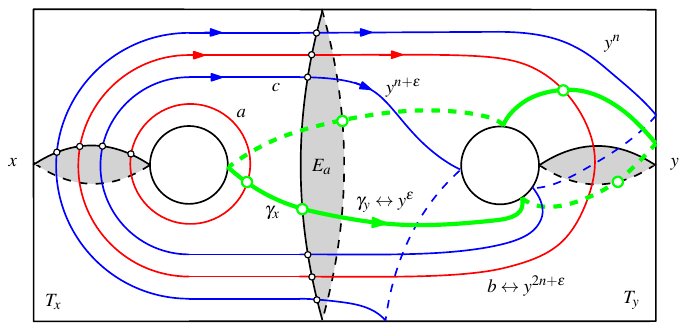}{The circle $\gamma=\gamma_x\cup\gamma_y\subset\partial H$ for $|x\cap\Gamma|=4$.}{oz63}
\end{figure}

Thus the circle $\gamma=\gamma_x\cup\gamma_y\subset\partial H$ intersects each circle $a,b,\partial y$ minimally in one point and is disjoint from $x\cup c$, such that $\gamma(x,y)=y^{\ve}$ in $\pi_1(H)$ (see Fig.~\ref{oz63}). 
\end{proof}

\subsection{Geometric realizations with primitive boundary slopes}\label{claprim2}

Let $L=K_1\sqcup K_2\sqcup K_3\subset\mS^3$ be a link and $P=X_0(p_1,p_2,p_3)\subset X_L$ a spanning pants with corresponding large boundary slopes $r_i=\partial P_i\cap\partial N(K_i)$ of the form $r_i=a_i/p_i$ for some $p_i\geq 2$.

In this section we consider the case where $X(P)$ is a handlebody and each slope $r_i$ is a primitive circle in $X(P)$. By Lemma~\ref{3prims2} there are two possible embeddings of the circles $r_1\sqcup r_2\sqcup r_3\subset X(P)$ in $X(P)$, but we will see that the fact that the link $L$ lies in $\mS^3$ allows for only one possible such embedding and greatly limits the knot types of the components of $L$.

The main result is the following.

\begin{prop}\label{coprim2} 
If $X(P)$ is a handlebody and each slope $r_i$ is primitive in $X(P)$ then 
\begin{enumerate}
\item
there is a nonseparating disk $x\subset X(P)$ which intersects $\Gamma=r_1\sqcup r_2\sqcup r_3\subset\partial X(P)$ minimally in $|x\cap\Gamma|=2$ points, and any two components of $\Gamma$ are coprimitive away from the third one,

\medskip
\item
for some $\{i,j,k\}=\{1,2,3\}$, $K_i$ and $K_j$ are trivial knots and $K_k$ is either a trivial or a torus knot.
\end{enumerate}
\end{prop}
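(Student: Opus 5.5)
Since each slope $r_i$ is primitive in $H=X(P)$, Lemma~\ref{3prims2} applies: there is a complete disk system $x\sqcup y\subset H$ such that $(H,x,y,r_1,r_2,r_3)$ is, up to relabeling, one of the two 6-tuples of Fig.~\ref{oz11}, with $|x\cap\Gamma|\in\{2,4\}$. The plan is to rule out $|x\cap\Gamma|=4$ by using that $L\subset\mS^3$.

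\textbf{Step 1: the Heegaard splitting.} The complementary handlebody $H'=N(P)\cup N(K_1)\cup N(K_2)\cup N(K_3)$ is genus two by \S\ref{seif1} and \S\ref{comp2}, and $\mS^3=H\cup_\partial H'$. By \S\ref{seif2}, $H'$ has a complete disk system $u'\sqcup v'$ whose boundaries $u,v\subset\partial H$ can be written explicitly in terms of frames. For instance, $u=A r_2+p_2\omega_2$ and $v=B r_3+p_3\omega_3$ as in (M4), with $\omega_2,\omega_3$ arcs across $P$ closed up through the solid tori.

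\textbf{Step 2: the geometric presentation.} Express $u(x,y)$ and $v(x,y)$ as words in $\pi_1(H)$ using the explicit positions from Fig.~\ref{oz11}. In the case $|x\cap\Gamma|=4$ we have $r_1=xy^{m}xy^{n}$-type words with $m-n=\ve$, and part (3) of Lemma~\ref{3prims2} supplies the circle $\gamma$ with $\gamma(x,y)=y^{\ve}$. The words $u,v$ then have exponents controlled by $p_1,p_2,p_3\geq2$ and by $m,n$. Since $\grp{x,y\mid u,v}$ is a genus two geometric presentation of $\pi_1(\mS^3)=1$, Kaneto's theorem \cite{kaneto2} applies: some Whitehead-type reduction must decrease the combined length, that is, the Whitehead graph of $u\sqcup v$ with respect to $x\sqcup y$ has a cut vertex, or one of the words is primitive and disjoint-position conditions hold. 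I would show, following \cite[\S7]{valdez14}, that with $p_i\geq2$ and $m,n,m+n\neq0$ the Whitehead graph is connected without cut vertex and no relator is primitive. This contradicts $\mS^3$, so $|x\cap\Gamma|=2$. \textbf{This is the main obstacle.} The difficulty is the careful bookkeeping of the exponents in $u,v$, especially when $A,B$ are chosen freely, so that the graph has no cut vertex. If the bookkeeping becomes unwieldy, I would fall back on a homological count, $|H_1|=1$, combined with reducing the length using $\gamma$.

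\textbf{Step 3: coprimitivity.} Once $|x\cap\Gamma|=2$, Fig.~\ref{oz11} (left) shows $x$ meeting two components once each and missing the third, say $c\subset T_y$. Then $a,b$ are coprimitive away from $c$. Moreover $c$ is primitive in $V_y$, so it meets $y$ once, and $y$ is disjoint from $a$. Since $b$ meets $y$, the disk obtained by banding $x$ to $y$ (or using $y$ and an appropriate disk disjoint from one circle) gives the remaining coprimitivity pairs. I would check directly in the picture that each pair is realized by some nonseparating disk meeting it once and missing the third circle. This proves (1).

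\textbf{Step 4: knot types.} For (2), apply Lemma~\ref{lemA}(5) to obtain spanning annuli between $\partial N(K_i)$ and $\partial N(K_j)$ meeting each $r$ once. Then $N(K_i)\cup N(A)\cup N(K_j)$ is a Seifert piece in which the cores are fibered. Using the disk $x$ from (1), I would show that $K_i\sqcup K_j$, together with the relevant annulus, forms a cable space inside a solid torus in $\mS^3$. From this, two components bound meridian disks of complementary solid tori of an unknotted torus, as in the construction of \S\ref{con1} and Remark~\ref{option1b}. Hence $K_i,K_j$ are trivial and $K_k$ lies on that torus, so it is a torus knot or trivial.
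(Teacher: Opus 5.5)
There is a genuine gap, and it begins in Step 1. The manifold $N(P)\cup N(K_1)\cup N(K_2)\cup N(K_3)$ is \emph{not} a handlebody. By \S\ref{seif1}, $N(P)\cup N(K_1)\cup N(K_2)$ is a genus two handlebody in which $r_3$ is a split $(p_1,p_2)$ Seifert circle, hence neither primitive nor a power. So by \S\ref{comp2}, attaching $N(K_3)$ gives an irreducible manifold with incompressible boundary; this is the manifold $M^3$ in the proof of Lemma~\ref{lemA}(2). Thus $\partial X(P)$ is not a Heegaard surface of $\mS^3$, and there is no geometric presentation to feed to Kaneto's theorem. The formulas you borrow from (M4) live in a handlebody where $r_1$ is a $p_1$-power, i.e.\ a different splitting.

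The missing idea is to move one solid torus across. Since $r_1$ is primitive in $X(P)$, $H=X(P)\cup N(K_1)$ is a genus two handlebody, and so is $H'=N(P)\cup N(K_2)\cup N(K_3)$, because $r_2,r_3$ are basic in $N(P)$. With $r_1=x$, $r_2=xy^{2n+\ve}$, $r_3=xy^nxy^{n+\ve}$ in $X(P)$, the slopes read $x^{p_1}$, $x^{p_1}y^{2n+\ve}$, $x^{p_1}y^nx^{p_1}y^{n+\ve}$ in $\pi_1(H)$. In $H'$, $r_1$ is a split Seifert circle whose splitting disk $D'$ is the unique disk separating $r_2,r_3$.

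Even with the right splitting, Step 2 is only a plan, and you defer exactly its hard part. The relators come from frames of the twice punctured tori cut off by $\partial D'$, so $\partial D'$ must be located on $\partial H$. The paper does this using the circle $\gamma$ of Lemma~\ref{3prims2}(3), the circle $J=\partial N(r_1\cup\gamma')$, and Lemma~\ref{tsd} (giving $|J\cap\partial D'|=4$). This yields frame circles $\beta=y^{n+\ve}x^sy^{-n}$ and $\omega=x^{p_1+s}y^{n+\ve}$ with $\gcd(s,p_1)=1$, and relators $w_{q,p_2}(r_2,\beta)$ and $w_{m,p_3}(r_3,\omega)$.

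The obstruction then comes from Kaneto's theorem, which is a factor condition, not a cut-vertex criterion: a cyclic permutation of one reduced relator must be a factor of a cyclic permutation of the other or of its inverse. For $q,m>0$, $\partial u$ has persistent full factors $x^s,y^{n+\ve}$ and $\partial v$ has $x^{p_1+s},y^{n+\ve}$. But the only full $x$-powers are $x^{p_1},x^s$ in $\partial u$ and $x^{p_1},x^{p_1+s}$ in $\partial v$. This is impossible since $\gcd(s,p_1)=1$ and $p_1\geq 2$; the other sign cases are similar. Your homological fallback cannot replace this, because $|H_1|=1$ is a single determinant condition that the free parameters can satisfy.

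Step 4 also has no working mechanism. A spanning annulus $A$ from Lemma~\ref{lemA}(5) only makes $N(K_i)\cup N(A)\cup N(K_j)$ Seifert fibered over the disk, with exceptional fibers of orders $\Delta(\partial_1A,\mu_i)$ and $\Delta(\partial_2A,\mu_j)$. If one of these is $1$, this piece is just a solid torus, possibly knotted, so nothing about knot types follows. Pointing to \S\ref{con1} and Remark~\ref{option1b} does not help, since those construct examples rather than classify them.

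The paper again uses the splitting $H=X(P)\cup N(K_1)$, $H'=N(P)\cup N(K_2)\cup N(K_3)$, now with the left picture of Fig.~\ref{oz11}. Lemma~\ref{tsd} gives $|\partial E\cap\partial E'|=4$, and $X_{K_1}=H'(\partial y)$ with $\partial y=u^Av^B$ meeting $\partial E'$ twice, so $K_1$ is trivial or an $(A,B)$ torus knot. In the torus knot case the fiber $\omega_2$ is an integral slope of $K_1$, hence primitive in $H$. Then $\omega_2,r_3$ are basic, $H=T_u\times[0,1]$, and $X_{K_2\sqcup K_3}=H(\partial E')=\wh{T}_u\times[0,1]$, so $K_2\sqcup K_3$ is a Hopf link.

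Your Step 3 is fine: reading off coprimitivity of all pairs from the left picture, with the pair $a,c$ handled by banding $x$ to $y$ along an arc of $b$.
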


We separate the proof of Proposition~\ref{coprim2} into two subsections, with the proof of part (1) being the most complex. Each proof is organized as a sequence of steps labeled (I), (II), 
etc.\ for easy reference, with some steps containing proofs of claimed statements.

We will make use of the following general fact about circles in a genus two surface.

\begin{lem}\label{tsd}
Let $F$ be a genus two surface and 
\begin{enumerate}
\item
$a,b,c\subset F$ mutually disjoint and nonparallel nonseparating circles,

\item
$\gamma\subset F$ a circle that separates $a$ and $b$ and intersects $c$ minimally in two points.
\end{enumerate}
If a circle $J\subset F$ is disjoint from $a$ and $c$ and intersects $b\sqcup\gamma$ minimally then either $J$ is parallel to one of the circles $a,b,c$ or $|J\cap\gamma|=2\,|J\cap b|$.
\end{lem}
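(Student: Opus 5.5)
Cut $F$ along the three circles. Since $a,b,c$ are mutually disjoint, nonparallel and nonseparating, $F\setminus(a\sqcup b\sqcup c)$ consists of two pants $Q_1,Q_2$, each having one boundary copy of each of $a,b,c$. The circle $J$ misses $a$ and $c$. So if $J\cap b=\emptyset$, then $J$ lies in one pants, is essential there, and hence is parallel to one of $a,b,c$. We may therefore assume $|J\cap b|=n\geq 1$. Then $J$ is a union of $n$ arcs, each lying in $Q_1$ or $Q_2$ and having both endpoints on the copies of $b$. In a pants, an essential arc with both ends on one boundary component, say $b$, is unique up to isotopy: it separates the copy of $a$ from the copy of $c$. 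Minimality of $J\cap b$ makes every such arc essential, so each arc of $J\setminus b$ is a parallel copy of this separating arc $\beta_i\subset Q_i$.

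Next we put $\gamma$ in normal form relative to the same decomposition. The circle $\gamma$ meets $c$ in two points and separates $a$ from $b$. Because $\gamma$ is separating, $\gamma\cdot a=\gamma\cdot b=\gamma\cdot c=0$. We isotope $\gamma$ to meet $a\sqcup b\sqcup c$ minimally. The intended picture, which I expect to be forced, is that $\gamma$ is disjoint from $a$ and $b$ and meets $c$ twice. Then $\gamma\cap Q_i$ is a single arc $g_i$ joining the two points on the copy of $c$ and separating the copy of $a$ from the copy of $b$. In other words, $\gamma$ is the boundary of a regular neighborhood of $a$ together with an arc crossing $c$, and it cuts off a once-punctured torus containing $a$. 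This normal form is the step I expect to need the most care. The argument would go like this: if $\gamma$ met $a$ or $b$, a bigon or outermost-arc argument in the pants, combined with $|\gamma\cap c|=2$ and the fact that $\gamma$ separates $a$ from $b$, would either reduce the intersection or force $\gamma$ to be trivial or parallel to $c$. Once $J$ and $\gamma$ are fixed (after making $J$ also meet $\gamma$ minimally, which can be done simultaneously, e.g.\ by geodesic representatives), the rest of the argument is local to each pants.

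Inside $Q_i$ we have the arc $\beta_i$ with both ends on $b$, separating $a$ from $c$, and the arc $g_i$ with both ends on $c$, separating $a$ from $b$. These two arcs must cross in exactly two points, and this minimal crossing number cannot be reduced by isotopy in the pants. Since every one of the $n$ arcs of $J\setminus b$ is a copy of some $\beta_i$, each contributes exactly $2$ points to $J\cap\gamma$. Summing gives $|J\cap\gamma|=2n=2|J\cap b|$.

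Two points remain. First, we must check that the local intersection counts are realized globally. The minimality hypothesis on $J$ with respect to $b\sqcup\gamma$, together with the bigon criterion, rules out any bigon between $J$ and $\gamma$ crossing the circles $a,b,c$. Second, we have to handle the possibility that $\gamma$ is not already in normal form with respect to $a,b$. That is the main obstacle I flagged above, and I would address it first by proving that a separating circle meeting the nonseparating circle $c$ exactly twice and splitting $F$ into tori containing $a$ and $b$ respectively must be disjoint from $a$ and $b$ up to isotopy.
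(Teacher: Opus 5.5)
Your proof is correct, but it runs differently from the paper's.

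\textbf{The paper's route.} The paper cuts $F$ along $\gamma$ rather than along $a\sqcup b\sqcup c$. It takes an annular neighborhood $A$ of $\gamma$ bounding once-punctured tori $T_a\supset a$ and $T_b\supset b$, and sorts the arcs of $J$ into parallelism classes.
\begin{itemize}
\item In $T_a$, every arc of $J$ is parallel to $c\cap T_a$.
\item In $T_b$, an arc of $J$ is either parallel to $c\cap T_b$ or crosses $b$ exactly once.
\end{itemize}
The matching of endpoints across $A$ is dictated by the two arcs of $c\cap A$. The paper uses this to show that an arc parallel to $c\cap T_b$ forces $J$ to be parallel to $c$. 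Every remaining arc of $J\cap T_b$ has two endpoints on $\gamma$ and meets $b$ once, so the count $|J\cap\gamma|=2|J\cap b|$ is read off with no intersection-number computation.

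\textbf{Your route.} The pants decomposition $Q_1\cup Q_2$ normalizes both curves at once: $J$ becomes copies of the $b$-to-$b$ arc, and $\gamma$ becomes the two $c$-to-$c$ arcs. The exceptional case then appears only as $J\cap b=\emptyset$, and no matching argument is needed. The cost is that you need the local intersection number $2$ in each pants, plus an argument that the given $J$ realizes it.

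\textbf{The step you flagged as the main obstacle is not one.} ``$\gamma$ separates $a$ and $b$'' means $a$ and $b$ lie in different components of $F\setminus\gamma$. So $\gamma\cap(a\sqcup b)=\emptyset$ with no isotopy needed; your own phrase ``splitting $F$ into tori containing $a$ and $b$'' already says this. Each crossing of $c$ switches between $Q_1$ and $Q_2$, so $\gamma\setminus c$ is one arc $g_i$ in each $Q_i$. Each $g_i$ is essential because $|\gamma\cap c|=2$ is minimal.

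\textbf{Globalization needs no geodesics.} The given $J$ is in minimal position with $\gamma$, so $J$ and $\gamma$ form no bigon in $F$. In particular, no arc of $J\cap Q_i$ forms a bigon with $g_i$ inside $Q_i$. Half-bigons are also impossible, because the endpoints of these arcs lie on different boundary components, $b$ and $c$. Hence each arc of $J\cap Q_i$ is in minimal position with $g_i$ and meets it in exactly two points, which gives $|J\cap\gamma|=2|J\cap b|$. If $J$ is inessential, the identity holds trivially as $0=0$.
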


\begin{proof}
Suppose that $J$ is not parallel to any of the circles $a,b,c$, so that $J\cap\gamma\neq\emptyset$.
Let $A=\gamma\times[0,1]\subset F$ be an annular neighborhood of $\gamma=\gamma\times\{0\}$ in $F$, and let $T_a\supset a$ and $T_b\supset b$ be the disjoint once punctured tori in $F$ bounded by $\gamma\times\{0\}$ and $\gamma\times\{1\}$.
We may assume that $(b\cup J)\cap A$ is a collection of mutually disjoint spanning arcs in $A$. The situation is represented in Fig.~\ref{oz65-2}.

\begin{figure}
\Fig{1}{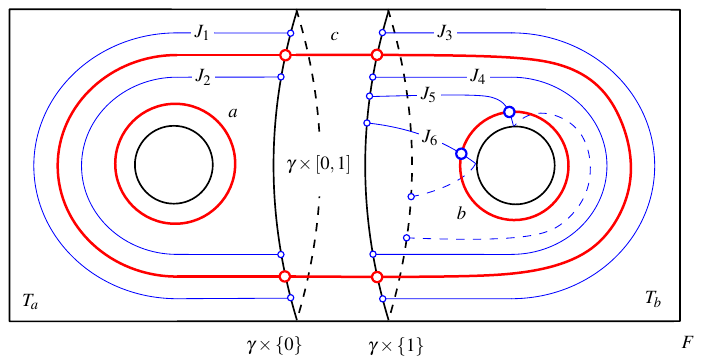}{The embedding of the circle $J\subset F$.}{oz65-2}
\end{figure}

The arcs $J\cap T_a$, being disjoint from the circle $a\subset T_a$ and the arc $c\cap T_a$, comprise up to two parallelism classes of arcs, each disjoint from and parallel to $c\cap T_a$, labeled $J_1$ and $J_2$ in Fig.~\ref{oz65-2}.

Any arc component of $J\cap T_b$ is disjoint from the arc $c\cap T_b$ but may intersect the circle $b\subset T_b$. The arc components of $J\cap T_b$ that are parallel to $c\cap T_b$ comprise up to two parallelism classes labeled $J_3$ and $J_4$ in Fig.~\ref{oz65-2}. As any arc in $T_b$ which is disjoint from and not parallel to $c\cap T_b$ must intersect the circle $b$ minimally in one point, it follows that any arcs of $J\cap T_b$ not parallel to $c\cap T_b$ comprise up to two parallelism classes labeled $J_5,J_6$ in Fig.~\ref{oz65-2}.

Observe that the matching between the endpoints of the arcs $J\cap T_a$ with those of the arcs $J\cap T_b$ via spanning arcs in the annulus $A$ is completely determined by the two arcs of $c\cap A$.

If the collection of arcs $J_3\subset T_b$ is nonempty then the endpoints of the arc in $J_3$ adjacent to $c\cap T_b$ must be matched with the endpoints of the arc in the collection $J_1\subset T_a$ adjacent to $c\cap T_a$. That is, $J$ is the union of one arc from $J_1$ and one arc from $J_3$ and is then parallel to $c$, contradicting our hypothesis.

Therefore the collection $J_3$, and similarly $J_4$, are empty and so $J\cap T_b=J_5\sqcup J_6$, which implies that each arc in $J\cap T_b$ intersects $\gamma\times\{1\}$ in two points and the circle $b\subset T_b$ in one point; thus the identity $|J\cap\gamma|=2\,|J\cap b|$ follows.
\end{proof}

\subsubsection{Proof of Proposition~\ref{coprim2}(1)}\label{cop1}
\phantom{}

Recall that $L=K_1\sqcup K_2\sqcup K_3\subset\mS^3$ is a link and $P=X_0(p_1,p_2,p_3)\subset X_L$ a spanning pants with corresponding large boundary slopes $r_i=\partial P_i\cap\partial N(K_i)$, such that
$X(P)$ is a handlebody and $N(P)=P\times[-1,1]\subset X_L$ with $P=P\times\{0\}$. 

\medskip
{\bf (I):}
Let $x\subset X(P)$ be a nonseparating disk that intersects $\Gamma$ in as few points as possible. By Proposition~\ref{3prims}, $|x\cap\Gamma|\in\{2,4\}$. 

\medskip
Suppose that $|x\cap\Gamma|=4$. 

\medskip
{\bf (II):} By Lemma~\ref{3prims2} there is complete disk system $x\sqcup y\subset X(P)$ which intersects $\Gamma\subset\partial X(P)$ minimally  such that, in $\pi_1(X(P))=\grp{x,y \ | \ -}$, up to relabeling of the $r_i$ if necessary, we have
\[
r_1(x,y)=x,\quad r_2(x,y)=xy^{2n+\ve},\quad
r_3(x,y)=xy^nxy^{n+\ve}
\]
for some integers $n\geq 1$ and $\ve=\pm1$ with $n+\ve\geq 1$, $2n+\ve\geq 3$. 

\medskip
The frontier $D$ of $N(x\cup r_1)\subset H$ is then a nontrivial separating disk in $H$ disjoint from $r_1$ which intersects $r_2$ and $r_3$ minimally in 2 and 4 points, respectively (see Fig.~\ref{oz63}).

\medskip
{\bf (III):} 
By \S\ref{comp2}, each manifold 
\[
H=X(P)\cup N(K_1)
\quad\text{and}\quad
H'=N(P)\cup N(K_2)\cup N(K_3)
\] 
is a genus two handlebody whose union yields a genus two Heegaard splitting 
\[
\mS^3=H\cup_{\partial} H'
\]
as represented in Fig.~\ref{oz84}.

\medskip
{\bf (IV):}
We regard $r_1,r_2,r_3$ as slopes in $\partial H$ and $\partial H'$, as shown in Fig.~\ref{oz84}. 

\begin{figure}
\Fig{.9}{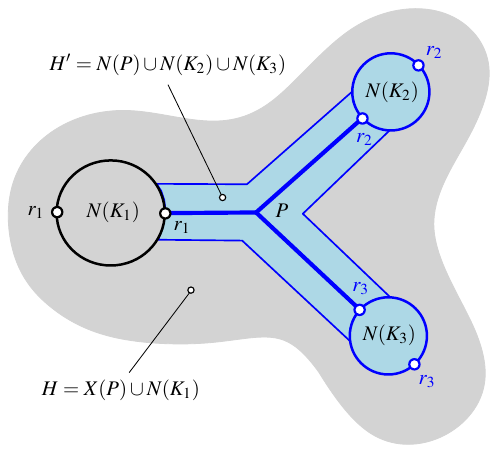}{The genus two handlebody decomposition $\mS^3=H\cup_{\partial} H'$.}{oz84}
\end{figure}

\medskip
For the handlebody $H$, a complete disk system consists of the disk $y\subset X(P)$ and the meridian disk of the solid torus $N(K_1)$ extended into $X(P)$ via $p_1$ mutually disjoint and parallel copies of the disk $x\subset X(P)$. We denote this latter disk again by $x\subset H$, so that in $\pi_1(H)=\grp{x,y \ | \ -}$ we have
\[
r_1(x,y)=x^{p_1},\quad r_2(x,y)=x^{p_1}y^{2n+\ve},\quad
r_3(x,y)=x^{p_1}y^nx^{p_1}y^{n+\ve}
\]
for integers $n\geq 1$ and $\ve=\pm1$ with $2n+\ve\geq 3$. 

By Lemma~\ref{pri}, \S\ref{pripo}, and \cite[Lemma 6.7]{valdez14}, in $H$, $r_1$ is a $p_1$-power circle and $r_2,r_3$ are Seifert circles.

\medskip
Since $N(P)=P\times[-1,1]$ with $P=P\times\{0\}$, by \S\ref{seif1} there is a unique disk $D'\subset H'$ separating the power circles $r_2$ and $r_3$. The disk $D'$ separates $H'$ into two solid tori $V_u,V_v$ with meridian disks $u\subset V_u\setminus D$ and $v\subset V_v\setminus D$,  so that $u\sqcup v$ form a complete disk system for $H'$. 

Similarly, the circle $\partial D'$ separates $\partial H'$ into twice punctured tori $T_u\subset\partial V_u$ and $T_v\subset\partial V_v$, and we assume that $r_2\subset T_u$ and $r_3\subset T_v$. The situation is represented in  Fig.~\ref{oz65-1}.

\medskip
{\bf (V):} 
The torus $\partial N(K_1)$ is the union of the annuli $A_1=N(K_1)\cap X(P)\subset\partial X(P)$ and its complement $A'_1=\cl[\partial N(K_1)\setminus A_1]\subset\partial H'$, each of which has as core a circle of slope $r_1$.

By Lemma~\ref{3prims2}(3), there is a circle $\gamma\subset\partial X(P)$ which intersects each slope $r_1,r_2$ and the disk $y$ minimally in one point and is disjoint from $r_3$, such that the arc $\gamma\cap T_y$ represents $y^{\ve}$ in $\pi_1(H)$ for some $\ve=\pm1$ (see Fig.~\ref{oz63} with $(a,b,c)=(r_1,r_2,r_3)$). After an isotopy, if necessary, we may assume that $\gamma_1=\gamma\cap A_1$ is a spanning arc of $A_1$. 

For any spanning arc $\gamma'_1\subset A'_1$ of $A'_1$ with the same endpoints as $\gamma_1$,  
$\gamma'=\gamma'_1\cup(\gamma\setminus\gamma_1)$ is then a circle in $\partial H=\partial H'$ which intersects each slope $r_1,r_2\subset\partial H'$ minimally in one point and is disjoint from $r_3$.

\medskip
The circle $J=\partial N(r_1\sqcup\gamma')\subset\partial H=\partial H'$ then separates $r_1$ and $r_3$ and intersects $r_2$ minimally in 2 points.
As the arc $\gamma'_1\subset A'_1$ is determined up to Dehn twists along the core circle $r_1$ of $A'_1$, the circle $J$ is well defined up to isotopy in $\partial H$ and independent of the choice of the arc $\gamma'_1\subset A'_1$.

\medskip
{\bf (VI):} 
Applying Lemma~\ref{tsd} to the data 
$(F,a,b,c,\gamma,J)=
(\partial H',r_3,r_2,r_1,\partial D',J)$, we obtain
\[
|J\cap\partial D'|=2\,|J\cap r_2|=4
\]

The embedding of the circles $r_1,r_2,r_3,\partial D',J$ in $\partial H'$ is represented in Fig.~\ref{oz65-1}.

\begin{figure}
\Fig{1}{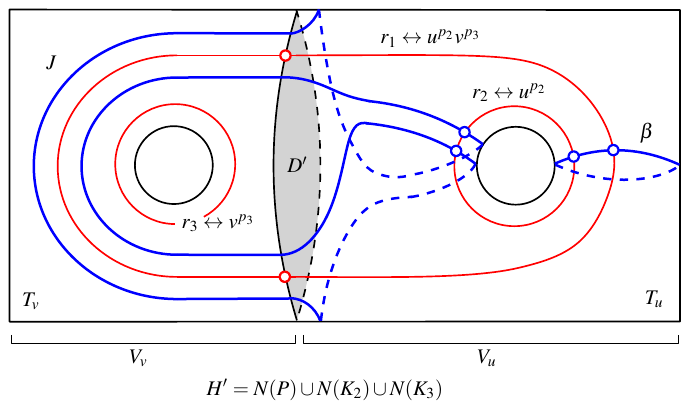}{The embedding of the separating circle $J$ in $\partial H'\subset H'$.}{oz65-1}
\end{figure}

\medskip
{\bf (VII):} Up to isotopy, there is a unique circle $\beta\subset T_u$ disjoint from the parallel arcs of of $J\cap T_u$, which necessarily intersects each circle $r_1,r_2$ minimally in one point, as shown in Fig.~\ref{oz65-1}. 
Thus $\beta$ lies in the same once punctured torus closed component $T_1$ of $\partial H\setminus J$ that contains $r_1$ and $\gamma$, which implies that, homologically, $\beta=\gamma+\ell r_1$ holds in $T_1$ for some integer $\ell$.

\begin{figure}
\Fig{1}{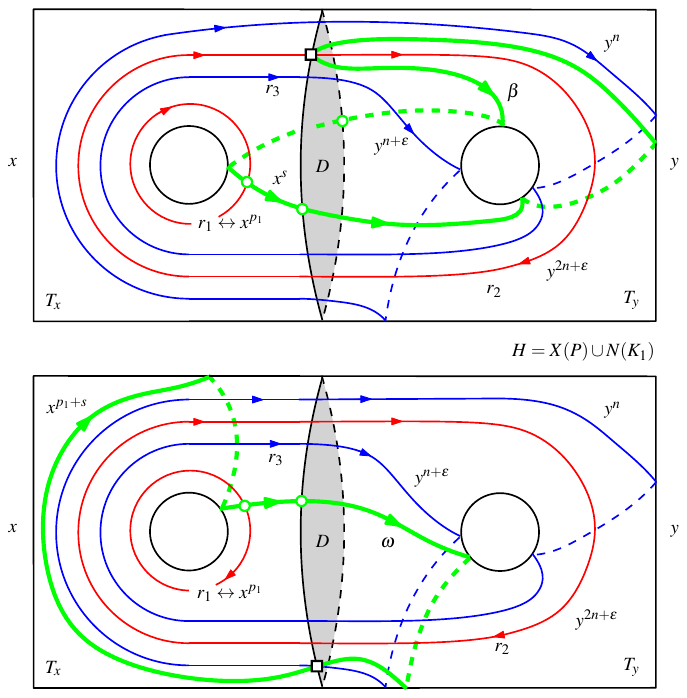}{Construction of the circles $\beta\sqcup \omega\subset \partial H\subset H$.}{oz64-2}
\end{figure}

\medskip
That is, suitably Dehn twisting $\gamma$ along $r_1$ produces a circle $\beta$ in $T_u$.
In particular, in $\partial H$, $\beta\cap T_y=\gamma\cap T_y$  while the arc $\beta\cap T_x$ is disjoint from the arcs $(r_2\sqcup r_3)\cap T_x$ and intersects $r_1$ minimally in one point
(see Fig.~\ref{oz63} with $(r_1,r_2,r_3)=(a,b,c)$ and $D=E_a$). Therefore the circle $\beta $ embeds in $\partial H$ as shown in Fig.~\ref{oz64-2}, top, so that in $\pi_1(H)$,
\[
(\beta\cap T_y)(x,y)=y^{-n}y^{n+\ve}=y^{\ve}
\quad\text{and}\quad
(\beta\cap T_x)(x,y)=x^s
\]
for some integer $s$ with $\gcd(s,p_1)=1$. Relative to the base point $\beta\cap r_2$ indicated by a square in Fig.~\ref{oz64-2}, top, in $\pi_1(H)$ we thus obtain the words 
\[
r_2(x,y)=y^{2n+\ve}x^{p_1}
\quad\text{and}\quad
\beta(x,y)=y^{n+\ve}x^sy^{-n}
\]

\medskip
The circles $\beta\cup r_2$ lie in $T_u$ with $|\beta\cap r_2|=1$ and $|r_2\cap\partial u|=p_2$ and so form a frame in $T_u$.
Therefore, in $T_u$ and with suitable orientations of $\beta,r_2,\partial u$, the circle $\partial u$ is a homological sum of the form 
\[
\partial u=q\cdot r_2+p_2\cdot\beta\subset T_u
\]
for some integer $q$ with $\gcd(q,p_2)=1$.

\medskip
{\bf (VIII):} 
A frame for $T_v$ is given by the circle $r_3\subset T_v$ and any circle $\omega\subset T_v$ with $|\omega\cap r_3|=1$. Equivalently, since the circles $\partial D'$ and $\partial N(r_2\cup\beta)$ are mutually parallel in $\partial H'=\partial H$, it suffices that $\omega$ lies in $\partial H\setminus(r_2\cup\beta)$ with $|\omega\cap r_3|=1$. Such a circle is constructed in Fig.~\ref{oz64-2}, bottom, as the union of two arcs that satisfy
\begin{align*}
\omega\cap T_x=(\beta\cap T_x)+r_1 &\implies (\omega\cap T_x)(x,y)=x^{p_1+s}
\\
\omega\cap T_y=\beta\cap T_y &\implies (\omega\cap T_y)(x,y)=y^{n+\ve}
\end{align*}
Therefore, relative to the base point $r_3\cap\omega$ indicated by a square in Fig.~\ref{oz64-2}, bottom, in $\pi_1(H)$ we obtain the words 
\[
r_3(x,y)=x^{p_1}y^nx^{p_1}y^{n+\ve}
\quad\text{and}\quad
\omega(x,y)=x^{p_1+s}y^{n+\ve}
\]
Thus the circles $\omega\cup r_3\subset T_v$, with $|\omega\cap r_3|=1$ and $|r_3\cap\partial v|=p_3$, form a frame in $T_v$ and so the circle $\partial v$ is a homological sum of the form 
\[
\partial v=m\cdot r_3+p_3\cdot\omega\subset T_v
\]
for some integer $m$ with $\gcd(m,p_3)=1$.

\medskip
{\bf (IX):} 
Following the notation set up in \S\ref{prim0},
we can write the homological relations  $\partial u=q\cdot r_2+p_2\cdot\beta$ and $\partial v=m\cdot r_3+p_3\cdot\omega$ found in (VII) and (VIII) as words in $\pi_1(H)=\grp{x,y \ | \ -}$:
\begin{align*}
\partial u(x,y)&=w_{q,p_2}(r_2(x,y),\beta(x,y))
\quad\text{and}\quad
\partial v(x,y)=w_{m,p_3}(r_3(x,y),\omega(x,y))
\end{align*}

Therefore, from the Heegaard decomposition
\[
\mS^3=H\cup_{\partial}H'=H(\partial u\sqcup\partial v)
\]
we obtain the {\it geometric presentation} for the fundamental group of $\mS^3$:
\[
\{1\}=\pi_1(\mS^3)=\pi_1(H)/\grp{\partial u,\partial v}=
\grp{ \ x,y \ | \ w_{q,p_2}(r_2(x,y),\beta(x,y)),w_{m,p_3}(r_3(x,y),\omega(x,y)) \ }
\]

\medskip
{\bf (X):} By (IX), for $q>0$ the word $w_{q,p_2}(r_2,\beta)$  can be decomposed (up to cyclic order) as a product of factors of the form $\{\beta^t r_2,\beta^{t+1} r_2\}$ or $\{\beta r_2^{\,t},\beta r_2^{\,t+1}\}$ for some integer $t>0$. 
In the first case only one type of factor may be present, as in the word $w_{1,p_2}(r_2(x,y),\beta(x,y))=\beta^{p_2} r_2$; since $p_2\geq 2$, in the second case both types of factors are always present.

Similarly, for $m>0$ the word 
$w_{m,p_3}(r_3,\omega)$ is a product of factors $r_3^e\omega,r_3^{e+1}\omega$ or $r_3\omega^e,r_3\omega^{e+1}$ for some integer $e>0$. For other values of $q,m$ we may use the identity
$w_{A,B}(\zeta,\eta)=w_{|A|,|B|}(\zeta^{\delta},\eta^{\ve})$ given in \S\ref{prim0}.

\medskip
The words $r_2(x,y)$, $r_3(x,y)$, $\beta(x,y)$ and $\omega(x,y)$ were determined in (VII) and (VIII), with the restrictions $p_1,p_2,p_3\geq 2$ and $\gcd(s,p_1)=\gcd(q,p_2)=\gcd(m,p_3)=1$.

The tables below list the words for  each type of factor of $w_{q,p_2}(r_2(x,y),\beta(x,y))$ and $w_{m,p_3}(r_3(x,y),\omega(x,y))$ described above, for the various sign combinations of the nonzero integers $q$ and $m$. In all cases the words for the factors are cyclically reduced, except for $r^{-e}_3\omega$ and $r^{-1}\omega^e$ in Cases 3 and 4, where cyclically reduced common conjugates of these words are given.

\medskip
{\bf Case 1:} $q>0$ and $m>0$, $t,e\geq 1$.
\begin{align}
\beta^t r_2&= y^{n+\ve}\,\boxed{x^s}
\left(y^{\ve}\,\boxed{x^s}\,\right)^{t-1}
y^{n+\ve}x^{p_1}
\tag{C1-a}
\\
\beta r_2^{\,t}&=y^{n+\ve}\,\boxed{x^s}\,\, y^{n+\ve}x^{p_1} \left( y^{2n+\ve}x^{p_1}\right)^{t-1}
\tag{C1-b}
\\
r_3^e\omega &= 
\left( x^{p_1}y^nx^{p_1}y^{n+\ve}  \right)^e 
\boxed{x^{p_1+s}}\,\,y^{n+\ve}
\tag{C1-c}
\\
r_3\omega^e &=
 x^{p_1}y^n  x^{p_1}y^{n+\ve} \left(\, \boxed{x^{p_1+s}}\,\,y^{n+\ve}\right)^e
\tag{C1-d}
\end{align}

\medskip
{\bf Case 2:} $q<0$ and $m>0$, $t,e\geq 1$.
\begin{align*}
\beta^{-t}r_2&=
y^n\,\boxed{x^{-s}} \left(y^{-\ve}\,\boxed{x^{-s}}\,\right)^{t-1} y^{n}x^{p_1}
\tag{C2-a}
\\
\beta^{-1} r_2^{\,t}&=
y^n\,\boxed{x^{-s}}\,\, y^{n}x^{p_1}  \left( y^{2n+\ve}x^{p_1} \right)^{t-1}
\tag{C2-b}
\\
r_3^e\omega &= 
\left( x^{p_1}y^nx^{p_1}y^{n+\ve}  \right)^e 
\boxed{x^{p_1+s}}\,\,y^{n+\ve}
\tag{C2-c}
\\
r_3\omega^e &=
 x^{p_1}y^n  x^{p_1}y^{n+\ve} \left(\, \boxed{x^{p_1+s}}\,\,y^{n+\ve}\right)^e
\tag{C2-d}
\end{align*}

\medskip
{\bf Case 3:} $q>0$ and $m<0$, $t,e\geq 1$.
\begin{align}
\beta^t r_2&=
y^{n+\ve}\,\,\boxed{x^s} \left(y^{\ve}\,\boxed{x^s}\,\right)^{t-1} y^{n+\ve} x^{p_1}
\tag{C3-a}
\\
\beta r_2^{\,t}&= \boxed{y^{n+\ve}x^s}\,\, y^{n+\ve} x^{p_1} \left( y^{2n+\ve} x^{p_1}\right)^{t-1}
\tag{C3-b}
\\
x^{p_1}y^{n+\ve} \cdot \left(r_3^{-e} \omega\right) \cdot y^{-n-\ve}x^{-p_1} 
&=
\left( y^{-n}x^{-p_1}y^{-n-\ve}x^{-p_1} \right)^{e-1} 
y^{-n}\,\,\boxed{x^{s-p_1}}
\tag{C3-c}
\\
x^{p_1}y^{n+\ve} \cdot \left(r_3^{-1} \omega^e \right) \cdot y^{-n-\ve}x^{-p_1} 
&=
\begin{cases}
y^{-n}\,\,\boxed{x^{s-p_1}} & e=1\\
y^{-n}\,\,\boxed{x^{s-p_1}}\,\, y^{n+\ve} \cdot \left( 
x^{p_1+s}y^{n+\ve}
\right)^{e-2} \,\boxed{x^{s-p_1}}  & e\geq 2
\end{cases}
\tag{C3-d}
\end{align}

\medskip
{\bf Case 4:} $q<0$ and $m<0$.
\begin{align}
\beta^{-t}r_2&=
y^n\,\boxed{x^{-s}} \left(y^{-\ve}\,\boxed{x^{-s}}\,\right)^{t-1} y^{n}x^{p_1}
\tag{C4-a}
\\
\beta r_2^{\,t}&= \boxed{y^{n+\ve}x^s}\,\, y^{n+\ve} x^{p_1} \left( y^{2n+\ve} x^{p_1}\right)^{t-1}
\tag{C4-b}
\\
x^{p_1}y^{n+\ve} \cdot \left(r_3^{-e} \omega\right) \cdot y^{-n-\ve}x^{-p_1} 
&=
\left( y^{-n}x^{-p_1}y^{-n-\ve}x^{-p_1} \right)^{e-1} 
y^{-n}\,\,\boxed{x^{s-p_1}}
\tag{C4-c}
\\
x^{p_1}y^{n+\ve} \cdot \left(r_3^{-1} \omega^e \right) \cdot y^{-n-\ve}x^{-p_1} 
&=
\begin{cases}
y^{-n}\,\,\boxed{x^{s-p_1}} & e=1\\
y^{-n}\,\,\boxed{x^{s-p_1}}\,\, y^{n+\ve} \cdot \left( 
x^{p_1+s}y^{n+\ve}
\right)^{e-2} \,\boxed{x^{s-p_1}}  & e\geq 2
\end{cases}
\tag{C4-d}
\end{align}

\medskip
{\bf (XI):}
We will apply a result due to Kaneto \cite{kaneto2} to the geometric presentation of $\pi_1(\mS^3)$ given above to obtain a contradiction. 
The following definitions will be used to set up and apply Kaneto's Theorem.

\medskip
Two words $w_1,w_2\in\grp{x,y \ | \ -}$  are {\it equal in the monoid sense} if one can be obtained from the other without performing any cancellations, in which case we write $w_1\doteq w_2$. Thus $x^3y\doteq xxxy$ but $x\not\doteq xy^{-1}y$.

\medskip
The words $w_1,w_2\in\grp{x,y \ | \ -}$  are {\it equivalent} if one is some cyclic permutation of the other in the monoid sense, in which case we write $w_1\equiv w_2$. Thus $x^2yy^{-1}x\equiv y^{-1}x x^2 y$, and $x^3\equiv x^2x\not\equiv x^2yy^{-1}x$.

We denote by $[w_1]$ the set of of all words equivalent to $w_1$.

\medskip
For nontrivial cyclically reduced words $w_1,w_2\in\grp{x,y \ | \ -}$:
\begin{itemize}
\item
$w_1$ is a {\it factor} of $w_2$ if $w_2\doteq u\cdot w_1\cdot v$ for some possibly empty words  $u,v\in\grp{x,y \ | \ -}$.

\item
for $w'_2$ a suitable cyclic permutation of $w_2$ we have $w'_2=x^{m_1}$, $y^{n_1}$, or $(x^{m_1}y^{n_1})\cdots (x^{m_k}y^{n_k})$ for some nonzero integers $m_i,n_i$ and $k\geq 1$. Each of the factors $x^{m_i}$ and $y^{n_i}$ in $w'_2$ is a {\it full power} in the word $w_2$.

\item 
$w_1$ is a {\it full factor} of $w_2$ if $w_1$ is a factor of $w_2$ which is a product of full powers in $w_2$.

Thus $y$, $x^2y$ and $x^3y$ are factors of the word $w=x^3yx$, but $y$ is the only full factor of $w$. Also, $x^4$ is a full factor of $w_1=x^2yx^2$ and $w_1$ is a full factor of $w_2=yx^2yx^2$, but $x^4$ is not a full factor of $w_2$.

\item
$w_1$ is a {\it persistent full factor} of the word $w_2$ if $w_1$ is a full factor of each word in $[w_2]$. 

Thus $x^2$ is a persistent full factor of the word $x^2yx^2y^2$, but $x^2yx^3y^2$ has no persistent full factors.
\end{itemize}

With the above conventions, a simplified version of Kaneto's Theorem \cite{kaneto2} is as follows.

\begin{lem}[\cite{kaneto2}] \label{kaneto}
If $\pi_1(\mS^3)=\grp{x,y \ | \ w_1(x,y),w_2(x,y)}$ is a presentation whose relators are obtained from a genus two Heegaard diagram for $\mS^3$ and $\wh{w}_1,\wh{w}_2$ are cyclic reductions of $w_1,w_2$, respectively, then either $\{\wh{w}_1,\wh{w}_2\}=\{x^{\ve}, y^{\delta}\}$ for some $\ve,\delta\in\{\pm1\}$ or some word in $[\wh{w}_i]$ is a factor of some word in $[\wh{w}_j]\cup[\wh{w}_j^{-1}]$ for some $\{i,j\}=\{1,2\}$.
\qed
\end{lem}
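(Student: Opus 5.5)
The natural route is geometric: realize the presentation by a genus two Heegaard diagram of $\mS^3$, apply the wave theorem of Homma--Ochiai--Takahashi, and translate the resulting wave into the word-combinatorial conclusion. So let $\mS^3=H\cup_\partial H'$ be the genus two splitting producing the presentation. Here $x\sqcup y\subset H$ is the complete disk system giving the generators, and $\partial u,\partial v\subset\partial H$ are the boundaries of a complete disk system of $H'$ giving $w_1,w_2$. As a first reduction, isotope $\partial u\sqcup\partial v$ in $\partial H$ to meet $\partial x\sqcup\partial y$ minimally, so that the words read off are already cyclically reduced and $w_i\equiv\wh{w}_i$ up to cyclic order. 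Then cut $\partial H$ along $\partial x\sqcup\partial y$ to obtain a four-holed sphere $S$ with boundary $x^\pm,y^\pm$. The circles $\partial u,\partial v$ become a family of disjoint essential arcs in $S$, grouped into parallelism classes (edges of the Heegaard/Whitehead diagram), and the edges at $x^+$ and $x^-$ are matched by the gluing map.

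If the diagram is the standard one, so that each of $\partial u,\partial v$ meets $\partial x\sqcup\partial y$ once, we are in the alternative $\{\wh{w}_1,\wh{w}_2\}=\{x^{\pm1},y^{\pm1}\}$; this requires a separate argument because the diagram could be standard with respect to a disk system different from $x\sqcup y$. Otherwise, I would invoke the Homma--Ochiai--Takahashi theorem, which states that every nonstandard genus two Heegaard diagram of $\mS^3$ admits a \emph{wave}. A wave is an arc $\omega\subset S$, disjoint from the relator arcs, with both endpoints on the same boundary component, say $x^+$, and essential in $S$ with the relator arcs taken into account. The wave $\omega$ cuts $S$ into two pieces, each containing one of $x^-,y^+,y^-$ together with part of $x^+$.

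The main work, and the step I expect to be the obstacle, is converting the wave into the statement that some cyclic permutation of $\wh{w}_i$ is a factor of a cyclic permutation of $\wh{w}_j^{\pm1}$. The idea is to look at the arc $\delta\subset x^+$ cut off by $\omega$. The relator arcs with an endpoint on $\delta$ lie on one side of $\omega$, and their other endpoints lie on $x^+$ itself, on $x^-$, or on $y^\pm$. Using the gluing $x^+\leftrightarrow x^-$, consecutive endpoints along $\delta$ continue coherently across $x^-$ into parallel strands. I would show that the strands through the innermost or outermost endpoints of $\delta$ trace out a full sequence of syllables that must be repeated verbatim in an adjacent strand. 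That adjacent strand belongs either to the other relator or to a different portion of the same relator, and the second possibility is excluded by minimality of the intersection. The required bookkeeping is the classical Whitehead/Zieschang description of which edge-weight patterns in a genus two diagram force parallel runs, together with a check that the run covers an entire cyclic copy of $\wh{w}_i$ rather than just a subword. I expect to need a case split according to whether the wave separates $y^+$ from $y^-$, and to track the sign conventions that produce $\wh{w}_j^{-1}$ rather than $\wh{w}_j$.
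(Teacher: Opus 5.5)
\paragraph{There is a genuine gap.} The paper does not prove this lemma; it is quoted from Kaneto. So your sketch has to stand on its own, and it fails at the step you flag. The trouble starts earlier.

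\paragraph{The wave can be on the other side.} The Homma--Ochiai--Takahashi theorem gives a wave based on \emph{one} of the two meridian systems, and you cannot choose which. You only treat a wave based on $\partial x\sqcup\partial y$ and disjoint from $\partial u\cup\partial v$. Such a wave at $x^+$ makes $x^+$ a cut vertex of the Whitehead graph, because the boundary component of $S$ cut off by $\omega$ can only be joined to $x^+$.

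Here is a diagram with no such wave. Let $E$ be a separating disk between $x$ and $y$. Take disjoint arcs reading $x^2$ and $x$ in one once-punctured torus of $\partial H\setminus\partial E$, and disjoint arcs reading $y^3$ and $y^2$ in the other. They close up to disjoint circles $\partial u,\partial v$ reading $x^2y^3$ and $xy^2$. All crossings are coherent, so there are no bigons and the position is minimal. $H(\partial v)$ is a solid torus in which $\partial u$ represents a generator, so this is a nonstandard genus two diagram of $\mS^3$. Its Whitehead graph is the $4$-cycle $x^+,x^-,y^+,y^-$, which has no cut vertex. Hence every wave here has its ends on $\partial u$ or $\partial v$ and misses $\partial x\cup\partial y$, a case your proposal never considers.

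That case is in fact the one that yields the conclusion directly. A wave $\omega$ with ends on $\partial u$ closes up with a subarc $\alpha\subset\partial u$ to a circle parallel, in the four-holed sphere $\partial H\setminus N(\partial u\cup\partial v)$, to a copy of $\partial v$; the other possibility, a copy of $\partial u$, is degenerate. By minimality, arcs of $\partial x\cup\partial y$ cross that annulus as spanning arcs. So $\alpha$ reads the whole cyclic word of $\partial v$, as in $x\cdot xy^2\cdot y$.

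\paragraph{The generator-side conversion is not an argument.} When the wave really is based on $\partial x$, it only says that all relator arcs at the cut-off vertex run in one parallel family into $x^+$. Equivalently, some Whitehead automorphism shortens $w_1,w_2$. This is local information and produces no strand of one relator fellow-travelling an entire cyclic copy of the other.

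Your claim that the adjacent parallel strand cannot be another portion of the same relator ``by minimality'' is false. Minimal position is compatible with a relator running parallel to itself, as in the $y^3$ stretch of $x^2y^3$ above. What you actually need is that the parallel run closes up after exactly one traversal of a relator, and nothing in the sketch forces this.

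\paragraph{The preliminary reduction is also wrong.} Minimal position does not make the words cyclically reduced. A relator arc in $S$ running from $x^+$ back to $x^+$ can be essential, so it cobounds no bigon, while reading $xx^{-1}$. This does occur: I expect the image of the standard diagram under a suitable disk twist of $H$ to give such a relator, since its word still cyclically reduces to $x^{\pm1}$ but has length $>1$. This is why the lemma is stated for $\wh{w}_1,\wh{w}_2$. Any geometric argument must also show that the factor relation read off the diagram survives cyclic reduction.
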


In relation to Kaneto's Theorem, persistent full factors make it easier to determine if a word does not appear as a factor of another word, due to the following {\it transitive property:}

\medskip
{\it $(*)$ \ For cyclically reduced words $u,v,w$ in $\grp{x,y \ | \ -}$, if $u$ is a persistent full factor of $v$ and some
word in $[v]$ is a factor of some word in $[w]$ then $u$ is a full factor of $w$.}

\medskip
{\bf (XII):}
For each entry in the table given in (X), a factor of the given word is highlighted by enclosing it inside a rectangle. These represent one of the {\it full persistent factors} of their corresponding cyclically reduced word $\partial u=w_{q,p_2}(r_2(x,y),\beta(x,y))$ or $\partial v=w_{m,p_3}(r_3(x,y),\omega(x,y))$.

\medskip
We analyze Case 1 in detail, the other cases following in a similar way. Here $\partial u=q\cdot r_2+p_2\cdot\beta$ and $\partial v=m\cdot r_3+p_3\cdot\omega$ for some integers $m,q>0$ and $p_2\geq 2$ with $\gcd(q,p_2)=1=\gcd(m,p_3)$, and $e,t\geq 1$.
We denote by  $\wh{\partial u}, \ \wh{\partial v}$ the cyclic reductions of the words $\partial u(x,y),\partial v(x,y)$, respectively.

We establish several claims.

\medskip
{\bf (A):} {\it Claim: $x^s$ and $y^{n+\ve}$ are persistent full factors of the word $\wh{\partial u}$.}
\\
Observe that, for all $t\geq 1$, $y^{n+\ve}$ is a persistent full factor of each word $\beta^tr_2$ and $\beta r_2^t$, hence $y^{n+\ve}$ is a persistent full factor of $\wh{\partial u}$.

If $\partial u=\beta^t r_2$ then $q=1$ and $t=p_2\geq 2$, hence the word $\partial u$ has the two separate full factors $x^s$ highlighted in (C1-a), which implies that $x^s$ is a persistent full factor of $\wh{\partial u}$.

If $\partial u$ is a product of both types of factors $\beta^t r_2$ and $\beta^{t+1} r_2$ in (C1-a), which is the case if $t=1$, then each such factor contains at least one full factor $x^s$ (the leftmost boxed factor) and so $x^s$ is a persistent full factor of $\wh{\partial u}$.

Finally, if $\partial u$ has factors as in (C1-b) then, as $p_2\geq 2$, it must have at least two such factors and so again $x^s$ is a persistent full factor of $\wh{\partial u}$. Therefore the claim holds.

\medskip
{\bf (B):} {\it Claim: $x^{p_1+s}$ and $y^{n+\ve}$ are persistent full factors of the word $\wh{\partial v}$.}
\\
The claim follows as in (A).

\medskip
{\bf (C):}
Suppose first that some element of $[\wh{\partial u}]$ is a factor of some element of $[\wh{\partial v}]\cup[\wh{\partial v^{-1}}]$.

By (A) and (B), $\wh{\partial u}$ and $\wh{\partial v}$ each have  $y^{n+\ve}$ as common persistent full factor and $n+\ve>0$, hence by $(*)$ it follows that some element of $[\wh{\partial u}]$ must be a factor of some element of $[\wh{\partial v}]$, and not of some element of $[\wh{\partial v^{-1}}]$.

On the other hand, as the word ${\partial v}$ is a product of factors of the form $r_3^e\omega$, $r_3^{e+1}\omega$ given in (C1-c), or of the form $r_3\omega^e$, $r_3\omega^{e+1}$ given in (C1-d), it is not hard to see that a word in $[\wh{\partial v}]$ contains a full factor $x^s$ iff $x^s\in\{x^{p_1},x^{p_1+s}\}$, contradicting the fact that $\gcd(s,p_1)=1$.

\medskip
{\bf (D):}
Suppose now that some element of $[\wh{\partial v}]$ is a factor of some element of $[\wh{\partial u}]\cup[\wh{\partial u^{-1}}]$.
As in (C), by (B) some element of $[\wh{\partial v}]$ must be a factor of some element of $[\wh{\partial u}]$, hence by (B) and $(*)$
$x^{p_1+s}$ is a full factor of some word in $[\wh{\partial u}]$. This is only possible if $x^{p_1+s}\in\{x^{p_1},x^s\}$, which contradicts the fact that $\gcd(s,p_1)=1$.

By Lemma~\ref{kaneto}, (C) and (D) imply that $\mS^3\neq H\cup_{\partial}H'$, contradicting (IX).

\medskip
By a similar analysis we obtain the same contradiction in Cases 2, 3 and 4, which implies that the equality $|x\cap\Gamma|=4$ cannot occur. By (I) and Lemma~\ref{3prims2}(2) we must thus have $|x\cap\Gamma|=2$ with $r_1\sqcup r_2\sqcup r_3\subset\partial X(P)$ embedded in $X(P)$ as in Fig.~\ref{oz11} left. Therefore
part (1) of Proposition~\ref{coprim2} holds.
\hfill\qed

\medskip

\subsubsection{Proof of Proposition~\ref{coprim2}(2)}\label{cop2}
Recall that $L=K_1\sqcup K_2\sqcup K_3\subset\mS^3$ is a link and $P=X_0(p_1,p_2,p_3)\subset X_L$ a spanning pants with corresponding large boundary slopes $r_i=\partial P_i\cap\partial N(K_i)$, such that
$X(P)$ is a handlebody and $N(P)=P\times[-1,1]\subset X_L$ with $P=P\times\{0\}$. 

\medskip
{\bf (I):} By Lemma~\ref{3prims2} and Proposition~\ref{coprim2}(1), the circles $\gamma=r_1\sqcup r_2\sqcup r_3\subset\partial X(P)$ are embedded in $X(P)$ as shown in Fig.~\ref{oz11} left, with the circles $a,b,c$ in the figure corresponding to the circles $r_i,r_j,r_k$ for some $\{i,j,k\}=\{1,2,3\}$.
Specifically, for convenience we assume that $(a,b,c)=(r_1,r_2,r_3)$ and there is a nontrivial separating disk $E\subset X(P)$ (corresponding to the disk $E_a$ in Fig.~\ref{oz11} left) which intersects $r_2\subset\partial X(P)$ minimally in two points and induces a complete disk system $z\sqcup y\subset X(P)$ 
(with $z$ corresponding to the disk $x$ in Fig.~\ref{oz11} left)
with minimal intersections $|z\cap r_1|=1=|z\cap r_2|$ and  $|y\cap r_3|=1=|y\cap r_2|$.

By \S\ref{comp2} and \S\ref{seif2}, each of the manifolds \[
H=X(P)\cup N(K_1)
\quad\text{and}\quad
H'=N(P)\cup N(K_2)\cup N(K_3)
\] 
is a genus two handlebody which together comprise a Heegaard decomposition $\mS^3=H\cup_{\partial}H'$, with $r_2\sqcup r_3\subset\partial H=\partial H'$ and the primitive slope $r_1\subset X(P)$ updating to a  slope $r_1\subset\partial H$ which is a $p_1$ power in $H$ (see Fig.~\ref{oz84}).

\medskip
The disks $E\sqcup y\subset X(P)$ also lie in $H$. The disk $E$ separates $H$ into two solid tori $V_x,V_y$, the circle $\partial E$ separates $\partial H$ into two twice punctured tori $T_x\subset V_x$ and $T_y\subset V_y$ with $r_1\subset T_x$ and $r_3\subset T_y$, and  $E$ induces a complete disk system $x\sqcup y\subset H$ such that
\begin{itemize}
\item
$x$ is a meridian disk of $V_x$ with $r_1\cup\partial x\subset T_x$ and $|x\cap r_1|=|x\cap r_2|=p_1\geq 2$,

\item
$y$ is a meridian disk of $V_y$ with $r_3\cup\partial y\subset T_y$ and $|y\cap r_2|=|y\cap r_3|=1$.
\end{itemize}

\medskip
{\bf (II):}
By \S\ref{seif1}, $r_1\subset\partial H'$ is a split $(p_2,p_3)$ Seifert circle with splitting disk $E'\subset H'$ the unique disk which separates the power circles 
$r_2\sqcup r_3\subset\partial H'$.

Since $|E'\cap r_1|=2$, by Lemma~\ref{tsd} applied to the data 
$(F,a,b,c,\gamma,J)=
(\partial H,r_3,r_1,r_2,\partial E,\partial E')$, we obtain
\[
|\partial E'\cap\partial E|=2\,|\partial E'\cap r_1|=4
\]
As the circle $\partial E'$ separates $\partial H'=\partial H$, the two  arc components of $T_x\cap\partial E'$ and $T_y\cap\partial E'$ are mutually parallel in $T_x$ and $T_y$, hence there is a nontrivial circle $\omega_2\subset T_x$, unique up to isotopy, which is disjoint from the arcs $T_x\cap\partial E'$. In $T_y$, $r_3$ is the unique circle disjoint from the arcs $T_y\cap\partial E'$.

The 7-tuple $(\partial H,r_1,r_2,r_3,\omega_2,\partial E,\partial E')$ is represented in Fig.~\ref{oz66} top. 

\begin{figure}
\Fig{1}{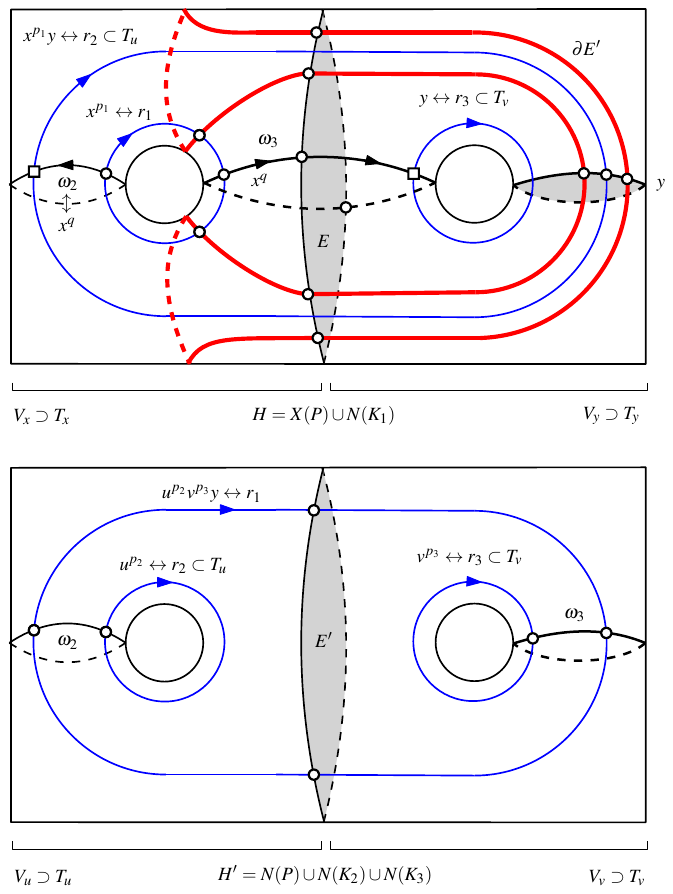}{The 7-tuple $(H,r_1,r_2,r_3,\omega_2,\partial E,\partial E')$ (top) and the frames 
$r_2\cup\omega_2\subset T_u$ and $r_3\cup\omega_3\subset T_v$.
}{oz66}
\end{figure}

\medskip
{\bf (III):} Similarly,
the disk $E'\subset H'$ separates $H'$ into two solid tori $V_u,V_v$, while $\partial E'$ separates $\partial H'$ into two twice punctured tori $T_u\subset V_u$ and $T_v\subset V_v$, with $r_2\subset T_u$ and $r_3\subset T_v$. The disk $E'\subset H'$ induces a complete disk system $u\sqcup v\subset H'$ such that
\begin{itemize}
\item
$u$ is a meridian disk of $V_u$ with $r_2\subset T_u$ and $|u\cap r_2|=p_2\geq 2$,

\item
$v$ is a meridian disk of $V_v$ with $r_3\subset T_v$ and $|v\cap r_3|=p_3\geq 2$.
\end{itemize}
Therefore the circles $r_2\subset\partial V_u$ and $r_3\subset\partial V_v$ run $p_2,p_3$ times around the solid tori $V_u,V_v$, respectively.

Observe that, necessarily, the circle $\omega_2\subset T_x$ lies in $T_u$ with $|r_2\cap\omega_2|=1$, hence the circles $r_2\cup\omega_2\subset T_u$ form a frame for $T_u$.
The situation is represented in Fig.~\ref{oz66} bottom.

Also, by the construction of $H$ and $H'$ in (I), The knot $K_1$ is a core of the solid torus $V_x\subset H$ and $K_2,K_3$ are cores of $V_u,V_v$, respectively, so we can identify
\[
X_{K_1}=H(\partial y),\quad
X_{K_2}=H'(\partial u),\quad
X_{K_3}=H'(\partial v)
\]

\medskip
Among all the circles $\omega_3\subset T_v$ with $|r_3\cap\omega_3|=1$ there is a unique one such that $\omega_3\cap T_y$ consists of a single arc disjoint from $\partial y\subset T_y$, whence the arc $\omega_3\cap T_x$ must the be parallel to the arcs $\partial E'\cap T_x$ and disjoint from $\omega_2\subset T_x$ (see Fig.~\ref{oz66} top). Thus the circles  $r_3\cup\omega_3\subset T_v$ form a frame for $T_v$ and, being disjoint from the disk $y\subset H$, the circles $\omega_2\sqcup\omega_3$ are coannular in $H$ by \S\ref{many}.
\medskip
Following \S\ref{free}, we write
\[
\pi_1(H)=\grp{x,y \ | \ -}
\quad\text{and}\quad
\pi_1(H')=\grp{u,v \ | \ -}
\]

\medskip
With the frame circles $r_2\cup\omega_2\subset T_u$ and $r_3\cup\omega_3\subset T_v$ we can write
\[
\partial u=Ar_2+p_2\omega_2\subset T_u
\quad\text{and}\quad
\partial v=Br_3+p_3\omega_3\subset T_v
\]
for some integers $A,B$ with $\gcd(A,p_2)=1=\gcd(B,p_3)$
and so from Fig.~\ref{oz66} top we obtain
\[
\partial y(u,v)=u^A v^B\quad\text{in}\quad \pi_1(H')=\grp{u,v \ | \ -}
\]

\medskip
{\bf (IV):} 
Observe that the circles $\partial y$ and $\partial E'$ intersect minimally in two points (see Fig.~\ref{oz66} top). Therefore, if $|A|,|B|\geq 2$ then $\partial y\subset\partial H'$ is a split Seifert circle in $H'$ of type $(A,B)$ with splitting disk $E'$, while if $|A|=1$ or $|B|=1$ then $\partial y\subset\partial H'$ is a primitive circle in $H'$. Equivalently, as $X_{K_1}=H'(\partial y)\subset\mS^3$, the knot $K_1$ is either an $(A,B)$ torus knot if $|A|,|B|\geq 2$, or a trivial knot if $|A|=1$ or $|B|=1$,  with $\partial x\subset\partial X_{K_1}$ the meridian slope.

\medskip
{\bf (V):}
Suppose that $|A|,|B|\geq 2$, so that by (IV) $\partial y\subset\partial H'$ is a Seifert fiber circle with splitting disk $E'\subset H'$. By \S\ref{seif1}, in $H'$ there are power circles $\omega_u=u^A\subset T_u\setminus y$ and $\omega_v=v^B\subset T_v\setminus y$, each of which is a regular fiber in $X_{K_1}=H'(y)$. Since the circles $\omega_u$ and $y$ are disjoint we must have  $\omega_u=\omega_2$. 

Now, the meridian slope of the knot $K_1$ is the circle $\partial x\subset\partial X_{K_1}$.
Being a fiber of $X_{K_1}$, the slope $\omega_2\subset\partial X_{K_1}$ must be integral, that is, $\Delta(\omega_2,\partial x)=1$, which implies that $\omega_2$ is a primitive circle in $H$. 

\medskip
{\bf (VI):}
Being separated by the disk $E$, by \S\ref{many} the primitive circles $\omega_2\subset T_x$ and $r_3\subset T_y$ are then basic circles in $H$.
As the circle $\partial E'\subset\partial H$ is disjoint from the basic circles $\omega_2\sqcup r_3\subset \partial H$ and intersects the separating disk $E\subset H$ minimally in $|\partial E\cap\partial E'|=4$ points, it follows that $H=T_u\times[0,1]$ with $T_u=T_u\times\{0\}$.

Observe that the exterior $X_{K_2\sqcup K_3}$ of the link $K_2\sqcup K_3\subset\mS^3$ can be identified with the space $H(\partial E')$; this implies that 
\[
X_{K_2\sqcup K_3}=H(\partial E')=\wh{T}_u\times[0,1]\subset\mS^3
\] 
and hence that $K_2\sqcup K_3\subset\mS^3$ is the Hopf link; in particular, $K_2$ and $K_3$ are trivial knots.

\medskip
{\bf (VII):}
By (I)--(VI), each of the knots $K_1,K_2,K_3$ is a trivial or torus knot, and if $K_i$ is a torus knot then $K_j,K_k$ are trivial knots. Therefore part (2) of Proposition~\ref{coprim2} holds.
\hfill\qed

\begin{rem}\label{option1}
Suppose that the circle $\omega_2\subset T_x$ is primitive in $H$. Then
the core knot $K_1\subset V_x$ is isotopic to $\omega_2$ and
the argument in (VI) applies to show that 
$X_{K_2\sqcup K_3}=\wh{T}_u\times[0,1]\subset\mS^3$.
Since by (III) the circles $\omega_2\sqcup\omega_3$ are coannular in $H$, it follows that the knot $K_1$ can be isotoped onto the annulus cobounded by $\omega_2\sqcup\omega_3$ in $H$ and hence onto the unknotted torus $\wh{T}=\wh{T}_u\times\{1/2\}\subset\mS^3$. Since 
\[
\omega_2\cdot\partial u=\omega_2\cdot (Ar_2+p_2\omega_2)=A
\quad\text{and}\quad
\omega_3\cdot\partial v=\omega_3\cdot (Br_3+p_3\omega_3)=B
\]
it follows that $K_1$ is an $(A,B)$ torus knot that lies on the trivially embedded torus $\wh{T}\subset\mS^3$ and $K_2\sqcup K_3\subset\mS^3$ is the Hopf link formed by the cores of the solid tori complementary to $\wh{T}$.

By (V) the circle $\omega_2\subset T_x$ is primitive in $H$ if $|A|,|B|\geq 2$, whence the conclusion above holds, and it is also primitive for any values of $A,B$ whenever the knot $K_2\subset\mS^3$ is trivial: for then $X_{K_2}=H'(\partial v)$ is a solid torus and so the circle $\partial v$ is primitive in $H$. Since $\omega_2=\omega_3=x^q$ for some integer $q$ and $r_3=y$ in $\pi_1(H)$  (see Fig.~\ref{oz66} top), and $\partial v=Br_3+p_3\omega_3$, by \S\ref{prim0} we have $\partial v=w_{B,p_3}(r_3,\omega_3)=w_{B,p_3}(x^q,y)$ in $\pi_1(H)$, which by Lemma~\ref{pri} implies that $|q|=1$; therefore the circle $\omega_2\subset\partial H$ is primitive in $H$.
\end{rem}

\section{Uniqueness of $T\subset X_K$}\label{uniqueT}

In this section we assume that $K\subset\mS^3$ is a hyperbolic knot and $r\subset\partial X_K$ a large slope such that $X_K(r)$ is a toroidal manifold. By Lemma~\ref{char}(2), such a slope is unique and of the form $r=a/2$, and there is an incompressible twice punctured torus $T\subset X_K$ such that $\partial T\subset\partial X_K$ has slope $r$ and
$\wh{T}$ is incompressible in $X_K(r)$.

\medskip
Given that, by \S\ref{golu1}, the torus $\wh{T}$ separates $X_K(r)$ into Seifert fiber spaces of the form $\mD^2(*,*)$, whose regular fibers in $\wh{T}$ intersect minimally in one point, it is not hard to see that $\wh{T}$ is, up to isotopy, the unique incompressible torus in $X_K(r)$. Here we extend the uniqueness to the twice punctured torus $T$ in $X_K$. 

\medskip
Our first result limits the type of boundary circles of an essential pants in $H$ and will be useful in the analysis of the intersection between different twice punctured tori in $X_K$.

\begin{lem}\label{pants}
Let $H$ be a genus two handlebody and $P\subset H$ an essential pants with boundary circles 
$\partial P=\gamma_0\sqcup\gamma_1\sqcup\gamma_2\subset\partial H$, such that either (a) $\partial P$ separates $\partial H$ into two pants and no two of the circles $\gamma_i$ are coannular in $H$, or (b) $\gamma_0$ separates $\partial H$ and $\gamma_1,\gamma_2$ are nonseparating parallel circles. Then in (a) at least one component of $\partial P$ is a power in $H$, while in (b) the circle $\gamma_1$ is primitive or a power in $H$.
\end{lem}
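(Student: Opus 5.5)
The plan is to boundary compress $P$ in $H$ and read off the boundary circles from the resulting disk. Since $H$ is a handlebody and $P$ is incompressible with nonempty boundary, $P$ is boundary compressible in $H$. Choose a boundary compression disk $D$ whose arc $\delta=D\cap P$ is essential in $P$. I will split into cases by which components of $\partial P$ contain the endpoints of $\delta$.

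\textbf{Case (a).} First suppose $\delta$ joins two distinct components, say $\gamma_1$ and $\gamma_2$. Boundary compressing along $D$ turns $P$ into an annulus $A$ whose boundary is $\gamma_0$ together with the band sum $\gamma_1\#_\delta\gamma_2$. Since $\partial P$ cuts $\partial H$ into two pants, this band sum is isotopic to $\gamma_0$ in $\partial H$, with the band running through one of the complementary pants.
\begin{itemize}
\item If $A$ is essential, then by \S\ref{comp1} it is a companion annulus of $\gamma_0$, so $\gamma_0$ is a power in $H$.
\item If $A$ is inessential, it is parallel into $\partial H$. Then $P$ is recovered from a boundary parallel annulus by attaching a band along an arc of $\partial H$. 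A standard argument shows that such a $P$ is either compressible or boundary parallel, which contradicts essentiality.
\end{itemize}
Next suppose $\delta$ has both endpoints on the same component, say $\gamma_0$. Then the boundary compression produces two annuli, with boundaries $\gamma_1\sqcup\gamma_0'$ and $\gamma_2\sqcup\gamma_0''$. Because of the pants decomposition, $\gamma_0'$ is parallel to $\gamma_1$ or to $\gamma_2$ in $\partial H$. Each of these annuli is then either essential, which makes $\gamma_1$ (or $\gamma_2$) a power via a companion annulus, or boundary parallel. The other possible outcome, coannular circles, is excluded by the hypothesis. If both annuli are boundary parallel, then $P$ is boundary parallel, which is again a contradiction.

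\textbf{Case (b).} Here the argument is similar, but the band sum relations change. Since $\gamma_1$ and $\gamma_2$ are parallel, the annulus $R\subset\partial H$ between them, together with $P$, can be used. The surface $P\cup R$ is a once punctured torus, or a surface with boundary $\gamma_0$, lying in $H$ after a push-off. I would argue as follows.
\begin{enumerate}
\item If some boundary compression arc joins $\gamma_1$ to $\gamma_2$, the result is an annulus with both boundary circles parallel to $\gamma_0$. This forces $\gamma_0$ to bound a disk, so the annulus is compressible, giving a contradiction with the incompressibility of $P$.
\item So the compression arc involves $\gamma_0$. Boundary compressing then gives an annulus with boundary $\gamma_1$ and a circle parallel to $\gamma_2$ or $\gamma_1$. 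That annulus either makes $\gamma_1$ coannular with a parallel copy, i.e.\ $P$ is inessential, or is essential.
\item An essential annulus of this kind means $\partial H\setminus\gamma_1$ compresses, and by \S\ref{pripo} this shows $\gamma_1$ is primitive or a power.
\end{enumerate}

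The main obstacle is the bookkeeping in case (a) for arcs with both ends on one boundary circle. Here I must check that the resulting circle is genuinely isotopic to another component, using the pants complement, and that the boundary parallel subcase really contradicts essentiality of $P$. For this I would first try an outermost arc argument with the parallelism region.
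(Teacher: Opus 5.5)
Your argument for (a) is sound, modulo routine points such as the annuli produced by boundary compression inheriting incompressibility from $P$. It is more self-contained than the paper, which simply cites \cite[Lemma 5.3(a)]{valdez7}. The self-band case on $\gamma_0$ is exactly where the no-coannularity hypothesis is needed.

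The gap is in (b), and that case analysis is the whole content of the paper's proof. Write $\beta=\partial D\cap\partial H$ for the outer arc. Let $Q$ be the pants with $\partial Q=\partial P$ and $Q'$ the once punctured torus bounded by $\gamma_0$; then $\beta$ is an essential arc in $R$, $Q$ or $Q'$. Your step (1) merges two different situations.
\begin{itemize}
\item If $\beta\subset R$ is a spanning arc, the band sum of $\gamma_1,\gamma_2$ is a trivial circle. Then $\gamma_0$ bounds a disk and $P$ compresses, so this part is right.
\item If $\beta\subset Q$ joins $\gamma_1$ to $\gamma_2$, the annulus you get has both boundary circles parallel to $\gamma_0$. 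Nothing forces $\gamma_0$ to bound a disk, since a separating circle in $\partial H$ need not. The correct reasoning is that a separating circle is not a power, so by \S\ref{comp1} the annulus is parallel into $\partial H$. The reassembly argument you used in (a) then shows $P$ is parallel into $\partial H$. The contradiction is with the non-boundary-parallel half of essentiality, not with incompressibility.
\end{itemize}

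Your dichotomy ``$\delta$ joins $\gamma_1$ to $\gamma_2$'' versus ``$\delta$ involves $\gamma_0$'' also omits arcs with both endpoints on $\gamma_1$ (or both on $\gamma_2$). In that case $\beta\subset Q$ separates $\gamma_0$ from $\gamma_2$. You get one annulus with boundary parallel to $\gamma_1$ and one with boundary parallel to $\gamma_0$. The latter is boundary parallel by \S\ref{comp1}, so either the former is a companion annulus or $P$ is boundary parallel.

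Your step (2) also misdescribes the outcome in the one case that produces the ``primitive'' alternative. If $\beta$ is an essential arc of $Q'$, the two band circles are parallel to the nonseparating circle $c\subset Q'$ disjoint from $\beta$, not to $\gamma_1$ or $\gamma_2$. The resulting annuli have boundaries $\gamma_1\sqcup c$ and $\gamma_2\sqcup c$ up to parallelism, so they cannot be boundary parallel. The conclusion ``primitive or power'' then comes from $\gamma_1$ and $c$ being coannular (\S\ref{many}). Your step (3) could absorb this once the annulus is identified correctly.

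The remaining arcs in $Q$ with an endpoint on $\gamma_0$ give annuli with both boundaries parallel to $\gamma_1$, and they go as you say. So your strategy is the paper's. To finish it you need the complete list of positions of $\beta$ (six up to symmetry, as in Fig.~\ref{oz89}) with the correct outcome for each. In the $\gamma_0$-parallel cases, use ``a separating circle has no companion annulus'' in place of ``$\gamma_0$ bounds a disk''.
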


\begin{proof}
By \cite[Lemma 5.3(a)]{valdez7}, if $\partial P$ separates $\partial H$ into two pants as in (a) then at least one component of $\partial P$ is a power in $H$.

\medskip
Suppose now that, in $\partial H$, $\gamma_0$ is a separating circle  and $\gamma_1,\gamma_2$ are two mutually parallel nonseparating circles. The closed components of $\partial H\setminus\partial P$ are then an annulus $A$ with $\partial A=\gamma_1\sqcup\gamma_2$, a pants $Q$ with $\partial Q=\partial P$, and a once punctured torus $Q'$ with $\partial Q'=\gamma_0$.

\medskip
Let $D\subset H$ be a boundary compression disk for $P$, with $\partial D$ consisting of two arcs $\alpha=D\cap P$ and $\beta=\partial D\cap\partial H$ such that $\alpha\cap\beta=\partial\alpha\cap\partial\beta$.
Up to homeomorphism, there are 6 types of embeddings of the arc $\beta$ in $\partial H$, as represented in Fig.~\ref{oz89}. We consider several cases.

\begin{figure}
\Fig{1}{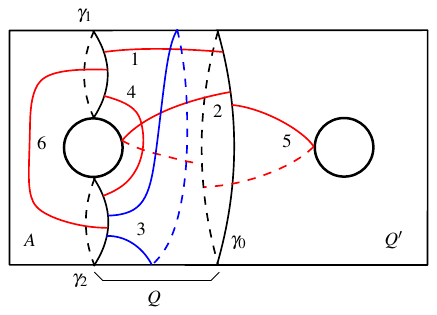}{Boundary compression arcs $\beta\subset\partial H$.}{oz89}
\end{figure}

\begin{enumerate}
\item[(i)]
If $\beta\subset Q$ is a type 1 or 2 arc then $P$ boundary compresses into one or two annuli $A_1,A_2\subset H$, each with boundary circles parallel to $\gamma_1$; since $P$ is not parallel into $\partial H$, necessarily $A_1$, say, is not parallel into $\partial H$ and so the slope $\gamma_1$ of $\partial A_1$ is a power in $H$ by \S\ref{comp1}.

\item[(ii)]
If  $\beta\subset Q$ is a type 3 arc then $P$ boundary compresses into an annulus $A_1$ with boundaries parallel to $\gamma_1$, and an annulus $A_2$ with boundaries parallel to $\gamma_0$. By \S\ref{comp1}, $A_2$ is parallel in $H$ into $\partial H$; as in (i), $A_1$ is not parallel into $\partial H$, hence the circle $\gamma_1$ is a power in $H$.

\item[(iii)]
If  $\beta\subset Q$ is a type 4 arc then $P$ boundary compresses into an annulus $A_1$ with boundaries parallel to $\gamma_0$. As in (ii), $A_1$ is parallel in $H$ into $\partial H$, which implies that $P$ is parallel into $\partial H$, contradicting the hypothesis.

\item[(iv)]
If  $\beta\subset Q'$ is a type 5 arc then $P$ boundary compresses into two annuli $A_1,A_2$, each with one boundary component parallel to $\gamma_1$ and the other to a nontrivial circle in the once punctured torus $Q'$. Thus each annulus $A_1,A_2$ is nonseparating in $H$, which by \S\ref{many} implies that the slope of $\gamma_1$ is primitive or a power in $H$.

\item[(v)]
If  $\beta\subset A$ is a type 6 arc then $P$ boundary compresses into an annulus $A_1$ with $\partial_1A_1=\gamma_0$  and $\partial_2A_1$ a trivial circle in $A$. This implies that $\gamma_0$ bounds a disk in $H$, contradicting the hypothesis.
\end{enumerate}
Therefore the lemma follows.
\end{proof}

\begin{prop}\label{torus}
If $S\subset X_K$ is a twice punctured torus with boundary slope $r$ such that the torus $\wh{S}\subset X_K(r)$ is incompressible then $S$ is isotopic to $T$ in $X_K$.
\end{prop}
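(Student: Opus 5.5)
We would argue by minimal intersection. Let $H_1,H_2$ be the genus two handlebodies into which $T$ cuts $X_K$ (Lemma~\ref{char}(1)). By Lemma~\ref{lemG}, $r$ is a split Seifert circle in both $H_1$ and $H_2$; in particular $r$ is neither primitive nor a power in either one, so $T$ is incompressible on both sides. Note that $S$ is incompressible in $X_K$: a compression disk for $S$ would also compress $\wh S$ in $X_K(r)$, unless its boundary were parallel to a component of $\partial S$, and in that case $r$ would bound a disk, which is impossible. Isotope $S$ so that $\partial S$ is disjoint from $\partial T$, both sets of boundary curves being parallel copies of $r$ on $\partial X_K$, and so that $|S\cap T|$ is minimal. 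Since both surfaces are incompressible, the intersection $S\cap T$ then consists of circles essential in both surfaces. If $S\cap T=\emptyset$, then $S$ lies in some $H_i$. In that case $S$ is an incompressible twice punctured torus in a handlebody with boundary slope $r$, and $\wh S$ is incompressible in $H_i(r)=\mD^2(*,*)$. The only incompressible torus in $H_i(r)$ is the boundary-parallel one, so we expect $S$ to be parallel to $\cl[\partial H_i\setminus N(K)]=T$, which gives the isotopy.

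For the case $S\cap T\neq\emptyset$, consider the components of $S\cap H_i$. These are incompressible subsurfaces of $S$ bounded by essential circles and possibly by components of $\partial S$. Their Euler characteristics sum to $\chi(S)=-2$, so each component is an annulus or a pants, or a once punctured torus or twice punctured torus (the last two only in degenerate configurations). Next we analyse the circle boundaries on $\partial H_i$. These are the circles $S\cap T\subset T\subset\partial H_i$ together with copies of $r$. The annular components are handled with \S\ref{comp1} and \S\ref{many}. An essential annulus in $H_i$ has as boundary slope either a power circle, which is then a regular fiber of $H_i(r)$ by \S\ref{seif1}, or a pair of coannular circles; an inessential annulus is instead removed by an isotopy reducing $|S\cap T|$. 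The pants components are handled with Lemma~\ref{pants}. Its case (a) forces some boundary circle to be a power in $H_i$. Its case (b) arises when the two copies of $r$ together with a separating circle of $T$ bound the pants, and then $r$ would have to be primitive or a power in $H_i$, which contradicts $r$ being split Seifert.

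Collecting these constraints, every circle of $S\cap T$ should be a regular fiber of $H_1(r)$ or of $H_2(r)$. These fibers are the circles $\alpha_1,\alpha_2$ of (GL-2), and they meet in exactly one point on $\wh T$. The circles of $S\cap T$ are mutually disjoint and nonparallel classes among them are limited, so all of them must be parallel to a single $\alpha_i$, say $\alpha_1$. The components of $S$ on the $H_2$ side then have boundary slope $\alpha_1$, which is not a power in $H_2$ because it is not a fiber there (the two fibers meet once). By the previous paragraph such components are boundary-parallel annuli, and this contradicts minimality of $|S\cap T|$. Hence $S\cap T=\emptyset$ and the first case applies.

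The main obstacle is this final bookkeeping: controlling every possible topology of the components of $S\cap H_i$, in particular showing that no pants or punctured torus component can carry the curves of $\partial S$ while having boundary circles that are not fibers. This is where the fact that $r$ is split Seifert in both $H_1$ and $H_2$ enters essentially, through Lemma~\ref{pants}(b). An alternative route we would try first, if the bookkeeping becomes unwieldy, is to pass to $X_K(r)$. There $\wh S$ is isotopic to $\wh T$ by the uniqueness of the incompressible torus, and one then tries to lift that isotopy rel $K$'s core, using the splitting disks from Lemma~\ref{lemG} to push the core of the surgery solid torus off the region swept out by the isotopy.
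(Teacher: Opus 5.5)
Your outline has the same skeleton as the paper's proof: put $S$ and $T$ in minimal position, show $S\cap T=\emptyset$, then show $S$ is parallel to $T$. There is, however, a genuine gap in the last step, which you dismiss with ``we expect''. That step is exactly where the paper does its real work.

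\textbf{The disjoint case.} Knowing that $\wh S$ is boundary parallel in $H_i(r)=\mD^2(*,*)$ does not give that $S$ is parallel to $T$ in $H_i$. Let $R\cong\wh T\times[0,1]$ be the product region between $\wh S$ and $\wh T$ in $H_i(r)$. Then $R$ contains the two 3-balls of the attached 2-handle that lie between the capping disks of $\partial S$ and those of $\partial T$. So the region $W\subset H_i$ between $S$ and $T$ is $R$ with regular neighborhoods of two arcs removed; these are subarcs of the surgery core, each running from $\wh S$ to $\wh T$. For $W\cong T\times[0,1]$ with $S$ and $T$ as its ends, these two arcs must be simultaneously straightened to vertical arcs in $R$ (up to a level-preserving homeomorphism). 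You give no argument for this, and the fact that $W$ is a handlebody does not obviously force it. Your fallback, lifting an isotopy from $\wh S$ to $\wh T$ in $X_K(r)$ off the surgery core, runs into the same problem.

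The paper settles this step with the \emph{split} property from Lemma~\ref{lemG}:
\begin{itemize}
\item Take a splitting disk $E$ for $r$ in $T^+$, with induced disks $D_1,D_2$.
\item Outermost arcs of $(D_1\sqcup D_2\sqcup E)\cap S$ cut off boundary compression disks $D'_1,D'_2,E',E''$ for $S$.
\item These disks must lie in $W$, because $S$ is incompressible in $S^+$.
\item The complex $A_1\cup A_2\cup D'_1\cup D'_2\cup E'$ has a product structure, and the complement in $W$ of its regular neighborhood is a 3-ball. Hence $W=T\times[0,1]$.
\end{itemize}
So splitness is used essentially here. It is not needed in Lemma~\ref{pants}(b), which only requires $r$ to be neither primitive nor a power, i.e.\ merely Seifert.

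\textbf{The case $S\cap T\neq\emptyset$.} This case is also not established. Your claim that every circle of $S\cap T$ is a regular fiber does not follow from your case analysis:
\begin{itemize}
\item Lemma~\ref{pants}(a) only makes \emph{some} boundary circle of a pants face a power, not all of them.
\item An essential annulus that is nonseparating in $H_i$ has coannular boundary circles that may both be primitive, hence not fibers. The paper rules these out (its step (III)) using the Gordon--Luecke theorem that a hyperbolic knot exterior contains no once-punctured Klein bottle with nonintegral boundary slope, together with the fact that $r$ is not the longitude.
\item Circles of $S\cap T$ that cut off both punctures of $T$ are trivial in $\wh T$. They can never be fibers, so they must be treated separately.
\end{itemize}
Your final step also assumes that the pieces of $S$ on the $H_2$ side are annuli, ignoring pants pieces and pieces meeting $\partial S$.

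The paper instead argues as follows. No circle can be a power on both sides of $S$, since the regular fibers of the two Seifert pieces $S^{+}(r)$ and $S^{-}(r)$ meet once in $\wh S$. From this it shows that each parallelism class of $S\cap T$ has at most two circles, which reduces the intersection graphs to five explicit configurations. Each configuration is then ruled out with Lemma~\ref{pants} and the companion-annulus analysis.
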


\begin{proof}
The twice punctured torus $S$ is necessarily incompressible in $X_K$. We assume that $S$ and $T$ have been isotoped in $X_K$ to intersect minimally, so that $\partial S$ and $\partial T$ are disjoint in $\partial X_K$ and any component in each graph of intersection $G_S=S\cap T\subset S$ or $G_T=S\cap T\subset T$ is a nontrivial circle in $S$ or $T$, respectively.

\medskip
{\bf (I):}
By Lemma~\ref{char}(2), for any such torus $S$,
\begin{enumerate}
\item
the closed components $S^+,S^-$ of $X_K\setminus S$ are genus two handlebodies, 

\item
each manifold $S^{\ve}(r)$ is a Seifert fiber space $\mD^2(*,*)$; thus the slope $r\subset\partial S^{\ve}$ is a Seifert circle in $S^{\ve}$,

\item 
the regular fibers in $\wh{S}$ of $S^{+}(r)$ and $S^{-}(r)$ intersect minimally in one point. 
\end{enumerate}

\medskip
Let $\ve\in\{\pm1\}$. By minimality of $S\cap T$, each component of $T\cap S^{\ve}$ is essential in the handlebody $S^{\ve}$, and each component of $S\cap T^{\ve}$ is essential in the handlebody $T^{\ve}$.
The core of each annulus $S^{\ve}\cap\partial X_K$ and $T^{\ve}\cap\partial X_K$ is a circle of slope $r\subset\partial X_K$ and so each component of $\partial S\sqcup\partial T$ that lies in $\partial S^{\ve}$ or $\partial T^{\ve}$ has slope $r$.

\medskip
{\bf (II)} {\it Claim: For $F\in\{S,T\}$, if a circle $\gamma^{\ve}\subset F$ is a power in $F^{\ve}$ then 
$\gamma^{\ve}\subset \wh{F}^{\ve}$ is a regular fiber in $F^{\ve}(r)=\mD^2(*,*)$ and 
$\gamma^+\cdot\gamma^-=\pm1$ holds in $F$ and $\wh{F}$. }
\\
We consider the case $F=S$ for definiteness.
Since the slope $r$ of $\partial S$ is a Seifert circle in $S^{\ve}$, if $\gamma^{\ve}\subset S$ is a power circle in $S^{\ve}$ then $\gamma^{\ve}$ is a regular fiber in $S^{\ve}(r)=\mD^2(p,q)$ by \cite[Lemma 6.7]{valdez14}; by (I)(3) 
$\gamma^+\cdot\gamma^-=\pm1$ holds in $\wh{S}$ and hence in $S$, .

\medskip
{\bf (III)} {\it Claim:
For $F\in\{S,T\}$, if $\gamma\subset F$ is a nonseparating circle then $\gamma$ is not coannular in $F^{\ve}$ to $r\subset\partial F^{\ve}$, and
if $A\subset F^{\ve}$ is an essential annulus with $\partial A\subset F$ then $A$ is a companion annulus of some nonseparating circle $\gamma\subset F$ which is a power in $F^{\ve}$.
}
\\
We consider the case $F=S$ for definiteness. 
Let $\gamma\subset S$ be a nonseparating circle; that is, $\gamma$ is nonseparating and not parallel to $r$ in $\partial S^{\ve}$.
If there is an annulus $B\subset S^{\ve}$ with $\partial B=\gamma\sqcup r$ then $\wh{B}\subset S^{\ve}(r)$ is a disk with boundary $\gamma\subset\partial S^{\ve}(r)$, contradicting the fact that $S^{\ve}(r)=\mD^2(*,*)$ has incompressible boundary. Thus $\gamma\subset S$ is not coannular to $r$ in $S^{\ve}$.

\medskip
For $A\subset S^{\ve}$ an essential annulus with $\partial A\subset S$, by \S\ref{many} and \S\ref{comp1}, each component of $\partial A$ is a nonseparating circle in $\partial S^{\ve}$, both primitive or both a power in $S^{\ve}$. Since the slope $r$ of $\partial S$ is a Seifert circle in $S^{\ve}$, no component of $\partial A$ is parallel in $\partial S^{\ve}$ to $r$ and so each component of $\partial A$ is a nonseparating circle in $S$.

If $A$ is nonseparating in $S^{\ve}$ then $\partial A$ separates $S$ into two pants $P_1,P_2$ with $\partial P_i=\partial_iS\sqcup\partial A$, in which case $P_1\cup A$ is a once punctured Klein bottle or torus in $X_K$ with large boundary slope $r$. As the knot $K$ is hyperbolic, this is not possible in the former case by \cite[Theorem 1.3]{gordonlu5}, and in the latter case since $r$ is not the longitude of $K$ (and also since in this case $r$ would separate $\partial S^{\ve}$).

Therefore $A$ separates $S^{\ve}$ and so by \S\ref{comp1} it is the companion annulus of some nonseparating circle $\gamma\subset S$ which is a power circle in $S^{\ve}$. 

\medskip
{\bf (IV)} {\it Claim:}
{\it
\begin{enumerate}
\item[(i)]
No component of $S\cap T$ is parallel to $\partial S$ in $S$ or $\partial T$ in $T$

\item[(ii)] two components $\gamma,\gamma'\subset S\cap T$ are mutually parallel in $S$ iff they are mutually parallel in $T$,

\item[(iii)] each parallelism class of circles $S\cap T\subset S$ or $S\cap T\subset T$ has at most two elements.
\end{enumerate}
In particular, if $S\cap T\neq\emptyset$ then the graphs $G_S,G_T$ are homeomorphic to each other and to one of the graphs in Fig.~\ref{oz68-2}.
}

\begin{figure}
\Fig{1}{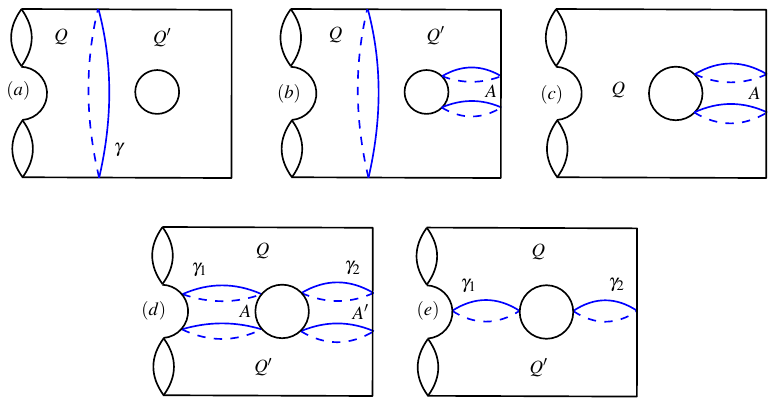}{The graphs $G_S=S\cap T\subset S$ and $G_T=S\cap T\subset T$.}{oz68-2}
\end{figure}

If $S\cap T$ has a component parallel to $\partial T$ in $T$ then $T\cap S^{\ve}$ has an annulus component $A$ with $\partial A\cap\partial T\neq\emptyset$, contradicting (III). Thus (i) holds.

\medskip
Suppose now that $\gamma_1\sqcup\gamma_2\sqcup\gamma_3$ are components of $S\cap T$ which are mutually parallel and adjacent in $T$, say they cobound annuli $A,A'\subset T$ with $\partial A=\gamma_1\sqcup\gamma_2$, $\partial A'=\gamma_2\sqcup\gamma_3$, and interiors disjoint from $S$. 
We may thus assume that $A\subset T\cap S^+$ and $A'\subset T\cap S^-$. By (III), $A$ and $A'$ are companion annuli and so, due to $A$, the circles $\gamma_1\sqcup\gamma_2$ are mutually parallel in $S\subset\partial S^{+}$, so (ii) holds, and $\gamma_2$ is a power in $S^+$, while due to $A'$ the circles $\gamma_2\sqcup\gamma_3$ are mutually parallel in $S\subset\partial S^{-}$ and $\gamma_2$ is a power in $S^-$, contradicting (II). Thus (iii) holds. 

As the surfaces $S$ and $T$ separate $X_K$, the graphs $G_S\subset S$ and $G_T\subset T$ separate $S$ and $T$; therefore the graphs $G_S,G_T$ are isomorphic to each other and to one of the graphs in Fig.~\ref{oz68-2} follows from (i)--(iii).

\medskip
{\bf (V)} {\it Claim: $S\cap T=\emptyset$.}
\\
For suppose that $S\cap T\neq\emptyset$. By (I), each manifold $S^{\ve}(r),T^{\ve}(r)\subset X_K(r)$ is a Seifert fiber space of the form $\mD^2(*,*)$, and the slope $r$ of each component of $\partial S\sqcup\partial T$ is a Seifert circle in $T^{+}$ and $T^{-}$. By (IV), the possible graphs $G_S,G_T$ are shown in Fig.~\ref{oz68-2}. Recall that each face of $G_S,G_T$ is essential in the handlebody $T^{\ve},S^{\ve}$ that contains it.
We consider several cases.

\medskip
{(1):} {\it The graphs $G_S,G_T$ are isomorphic to the graph in Fig.~\ref{oz68-2}(a) or (b).}
\\
We may assume that the pants face $Q$ of $G_S$ lies in $T^+$.
By Lemma~\ref{pants}, 
the slope $r$ of $\partial S\subset\partial T^+$ must be primitive or a power in $T^+$, contradicting the fact that $r$ is a Seifert circle in $T^+$.

\medskip
{(2):} {\it The graphs $G_S,G_T$ are isomorphic to the graph in Fig.~\ref{oz68-2}(c).}
\\
We may assume that $S\cap T^+$ is the annulus face $A$ of $G_S$.
By (III), the annulus $A$ cobounds with $T$ a companion solid torus $V\subset T^+$. Since $T^+(r)=\mD^2(p,q)$, this implies that the annulus $A\subset T^+(r)$ separates $T^+(r)$ into two solid tori $V,W$ around which $A$ runs $p\geq 2$ and $q\geq 2$ times, respectively. 

As the solid tori $V,W$ lie on opposite sides of $\wh{S}\subset X_K(r)$, say $V\subset S^+(r)$ and $W\subset S^-(r)$,
it follows that a core circle $c$ of $A$ is a power in $S^+(r)$ and $S^-(r)$ and hence by (II) a regular fiber of $S^+(r)$ and $S^-(r)$, contradicting (I)(3). 

\medskip
{(3):} {\it The graphs $G_S,G_T$ are isomorphic to the graph in Fig.~\ref{oz68-2}(d) or (e).}
\\
We may assume that the pants face $Q$ of $G_S$ lies in $T^+$, while $T^-$ contains either the annular faces $A,A'$ in Fig.~\ref{oz68-2}(d)
or the pants face $Q'$ in Fig.~\ref{oz68-2}(e). 

In the case of Fig.~\ref{oz68-2}(d), by (III) and Lemma~\ref{pants} one of the slopes $\gamma_1,\gamma_2$ is a power in $T^+$ and each slope $\gamma_1,\gamma_2$ is a power in $T^-$.
In the case of Fig.~\ref{oz68-2}(e), by (III) and Lemma~\ref{pants} some slope $\gamma\in\{\gamma_1,\gamma_2\}$ is a power in $T^+$ and some slope $\gamma'\in\{\gamma_1,\gamma_2\}$ is a power in $T^-$. Either case contradicts (II). 

\medskip
By (1)--(3) we must therefore have $S\cap T=\emptyset$.

\medskip
{\bf (VI):}
By (V), the surface $S$ is disjoint from $T$ so we may assume that  $S\subset S^+\subset T^+$. Thus $\partial S\subset\partial T^+$ and the closed components of $\partial T^+\setminus(\partial S\sqcup\partial T)$ are as follows:
\begin{itemize}
\item
the once punctured torus $T$,

\item
an annulus $A=S^+\cap\partial X_K$ with $\partial A=\partial S$ and core the slope $r$,

\item
an annulus $A_1$ with $\partial A_1=\partial_1 S\sqcup\partial_1T$,

\item
an annulus $A_2$ with $\partial A_2=\partial_2 S\sqcup\partial_2T$.
\end{itemize}

As the surface $S$ is incompressible in $T^+$, $S$ separates $T^+$ into two handlebodies $S^+$ and $W$ with $\partial S^+=S\cup A$ and $\partial W=(S\sqcup T)\cup(A_1\sqcup A_2)$; thus $W$ is a genus 3 handlebody.

\begin{figure}
\Fig{.75}{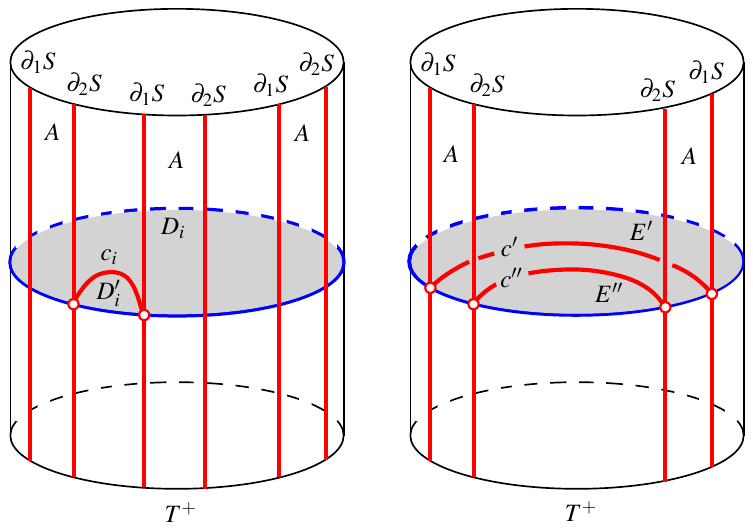}{The disks $D'_i\subset D_i$, $i=1,2$, and $E'\sqcup E''\subset E$ in $T^+$.}{oz75}
\end{figure}

\medskip
By Lemma~\ref{lemG} the slope $r\subset\partial T^+$ is a Seifert circle which splits in $T^+$. Following \S\ref{seif1}, let $E\subset T^+$ be a separating disk which intersects $r$ minimally in two points, and let $D_1\sqcup D_2\subset T^+\setminus E$ be the complete disk system for $T^+$ induced by $E$. After isotoping $S$ in $T^+$ to intersect $D_1\sqcup D_2\sqcup E$ minimally, each component of $\partial S\subset\partial T^+$ is intersected, coherently, by $D_1$ in $p_1$ points and by $D_2$ in $p_2$ points for some integers $p_1,p_2\geq 2$, so that $T^+(r)=\mD^2(p_1,p_2)$, and noncoherently by $E$ in two points, as shown in Fig.~\ref{oz75} (where $p_i=3$ is used for simplicity). 

\medskip
By the incompressibility of $S$ in $T^+$ and the  minimality of $(D_1\sqcup D_2\sqcup E)\cap S$, each component of $(D_1\sqcup D_2\sqcup E)\cap S$ is an arc.

For each $i=1,2$, let $c_i\subset D_i\cap S$ be an arc component that is outermost in $D_i$ and cobounds a subdisk $D'_i\subset D_i$ with $\partial D_i$. Thus $D'_i$ is a boundary compression disk for $S$ in $T^+$.
As $\partial S^+=S\cup A$, if $D_i\subset S^+$ then $D_i\cap A$ is a spanning arc in $A$ and hence the boundary compression of $S$ along $D_i$ gives rise to a compression disk for $S$ in $S^+$, contradicting the incompressibility of $S$; therefore $D_i\subset W$.

Similarly, $E\cap S$ consists of two arc components $c'\sqcup c''$ which cobound two outermost subdisks $E'\sqcup E''\subset E$ with $\partial E$, each lying in $W$.
The situation is represented in Fig.~\ref{oz75}.

As the disks $D_1,D_2$ are separated in $T^+$ by the disk $E$,
the boundary compression disks $D'_1,D'_2$ are separated in $W$ by the disks $E',E''\subset W$ and hence are not mutually parallel in $W$. Therefore the disks $D'_1\sqcup D'_2\sqcup E'\sqcup E''$ are embedded in the region $W$ as shown in Fig.~\ref{oz76}, where only the boundary of each disk is outlined.

\begin{figure}
\Fig{.7}{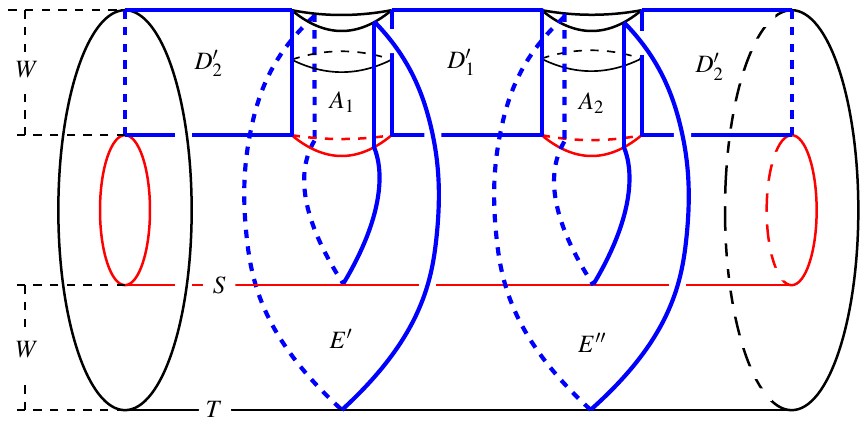}{The disks $D'_1\sqcup D'_2\sqcup E'\sqcup E''$ in $W\subset T^+$.}{oz76}
\end{figure}

\medskip
{\bf (VII):} 
The 2-complex
\[
C=A_1\sqcup A_2\sqcup D'_1\sqcup D'_2\sqcup E'\subset W
\]
has a product structure  of the form $C=(C\cap T)\times[0,1]\subset W$ with $C\cap T$ corresponding to $(C\cap T)\times\{0\}$ (see Fig.~\ref{oz76}). This product structure extends to a regular neighborhood of $C$ in $W$ of the form
\[
N(C)=N(C\cap T)\times[0,1]\subset W, \ N(C\cap T)=N(C\cap T)\times\{0\}\subset T
\]
As $\partial N(C\cap T)\subset T$ is a circle, the frontier of $N(C)\subset W$ is the annulus $A_N=[\partial N(C\cap T)]\times[0,1]\subset W$ with one boundary component on $S$ and the other on $T$.

Now, the manifold $W'=\cl[W\setminus N(D'_1\sqcup D'_2)]\subset W$ is irreducible with $\partial W'$ a torus and contains the disk $E'$.
Since $E'\cap T$ is a spanning arc in the annulus $\cl[T\setminus N(D'_1\sqcup D'_2)]$, 
$E'\subset W'$ is a nonseparating disk and so $W'$ is a solid torus and $W''=\cl[W\setminus N(C)]\subset W$ is a 3-ball. Thus the product structure of $N(C)$ can be extended to $W''$ across the annulus $A_N=N(C)\cap W''$, so that $W=N(C)\cup W''=T\times[0,1]$ with $T=T\times\{0\}$. Therefore $S$ is parallel to $T$ in $T^+$.
\end{proof}

% ----------------------------------------------------------------
\bibliographystyle{amsplain}
%\bibliography{refer3}

\providecommand{\bysame}{\leavevmode\hbox to3em{\hrulefill}\thinspace}
\providecommand{\MR}{\relax\ifhmode\unskip\space\fi MR }
% \MRhref is called by the amsart/book/proc definition of \MR.
\providecommand{\MRhref}[2]{%
  \href{http://www.ams.org/mathscinet-getitem?mr=#1}{#2}
}
\providecommand{\href}[2]{#2}

\end{document}